\newcommand{\Z}{{\mathbb Z}}
\newcommand{\Ga}{\mathrm{Gal}}
\newcommand{\he}{H_{\mathrm{\acute{e}t}}}
\newtheorem{thm}{Theorem}[section]
\newtheorem{lemma}[thm]{Lemma}
\newtheorem{prop}[thm]{Proposition}
\newtheorem{cor}[thm]{Corollary}
\font\brus=wncyr10.240pk scaled 1200 .240pk
\DeclareFontFamily{U}{wncy}{}
    \DeclareFontShape{U}{wncy}{m}{n}{<->wncyr10}{}
    \DeclareSymbolFont{mcy}{U}{wncy}{m}{n}
    \DeclareMathSymbol{\Sha}{\mathord}{mcy}{"58}
\begin{document}

\title[Finiteness properties]{Properness of the global-to-local map for algebraic groups with toric connected component and other finiteness properties}

\author[A.~Rapinchuk]{Andrei S. Rapinchuk}

\author[I.~Rapinchuk]{Igor A. Rapinchuk}

\address{Department of Mathematics, University of Virginia,
Charlottesville, VA 22904-4137, USA}

\email{asr3x@virginia.edu}

\address{Department of Mathematics, Michigan State University, East Lansing, MI
48824, USA}

\email{rapinchu@msu.edu}

\begin{abstract}
This is a companion paper to \cite{RR-tori}, where we proved the finiteness of the Tate-Shafarevich group for an arbitrary torus $T$ over a finitely
generated field $K$ with respect to any divisorial set $V$ of places of $K$. Here, we extend this result to any $K$-group $D$ whose connected component
is a torus (for the same $V$), and as a consequence obtain a finiteness result for the local-to-global conjugacy of maximal tori in reductive groups over
finitely generated fields. Moreover, we prove the finiteness of the Tate-Shafarevich group for tori over function fields $K$ of normal varieties defined over
base fields of characteristic zero and satisfying Serre's condition (F), in which case $V$ consists of the discrete valuations associated with the prime divisors on the variety (geometric places). In this situation, we also establish the finiteness of the number of $K$-isomorphism classes of algebraic $K$-tori of a given dimension having good reduction at all $v \in V$, and then discuss ways of extending this result to positive characteristic. Finally, we prove the finiteness of the number of isomorphism classes of forms of an absolutely almost simple group defined over the function field of a complex surface that have good reduction at all geometric places.
\end{abstract}

\maketitle

\hfill \parbox[t]{6.5cm}{\it In memory of Stepan F. Rapinchuk, our father and grandfather.}


\section{Introduction}

Our goal in this paper is to extend and generalize in different directions some of the results obtained in \cite{RR-tori}. First, let $K$ be a finitely generated field, and let $V$ be a divisorial set of discrete valuations of $K$. This means that  $V$ consists of the discrete valuations corresponding to the prime divisors on a separated normal scheme $\mathfrak{X}$ of finite type over $\mathbb{Z}$ with function field $K$, which we will refer to as a {\it model} of $K$ (cf. the discussion in \cite[5.3]{RR-survey}). For a $K$-defined linear algebraic group $G$, one considers the global-to-local map in Galois cohomology
$$
\lambda_{G , V} \colon H^1(K , G) \longrightarrow \prod_{v \in V} H^1(K_v , G)
$$
(we refer the reader to \cite[Ch. III]{Serre-GC} and \cite[4.2]{RR-survey} for the basic notions and standard notations pertaining to the Galois cohomology of algebraic groups). It has been known for a long time (see \cite{BS}) that if $K$ is a number field, then the map $\lambda_{G , V}$ is {\it proper} (i.e., the preimage of any finite set is finite) for every algebraic $K$-group $G$ and any divisorial set $V$ (which in this situation simply means that $V$ contains almost all nonarchimedean places of $K$). Recent work, however, has led to the expectation that $\lambda_{G , V}$ should be proper much more generally; in particular, this was conjectured  to be the case for any (connected) reductive $K$-group $G$ and any divisorial set $V$ (see \cite[Conjecture 6.1]{RR-survey})\footnotemark \footnotetext{In a very broad sense, this extension is in line with the general discussion by Lang (\cite[p. 202]{LangHyperbolic}) of whether finiteness results for the number of rational points on certain varieties defined over number fields should in fact hold over all finitely generated fields of characteristic zero.}. In \cite{RR-tori}, this conjecture was proved for any $K$-torus $T$ --- note that in this case, the properness of $\lambda_{T , V}$ is equivalent to the finiteness of the Tate-Shafarevich group $\text{\brus{ Sh}}(T , V) = \ker \lambda_{T , V}$.  In the present paper, we extend this result to $K$-groups whose connected component is a torus; it should be pointed out that the passage to connected groups is a non-trivial matter even in the number field situation (cf. \cite[\S 7]{BS}), and the general case requires new tools. More precisely, we will prove the following statements.

\begin{thm}\label{T:1}
Let $K$ be a finitely generated field and $V$ be a divisorial set of places of $K$. Then for any linear algebraic $K$-group $D$ whose connected component $D^{\circ}$ is a torus and any finite Galois extension $L/K$, we have:

\vskip2mm

\noindent \ {\rm (i)} \parbox[t]{12cm}{if $D$ is commutative, then  the map $$\lambda^i_{D , V , L/K} \colon H^i(L/K , D) \longrightarrow \prod_{v \in V} H^i(L_w/K_v , D), \ \ w \vert v$$ is proper for any $i \geq 1$;}

\vskip2mm

\noindent {\rm (ii)} \parbox[t]{12cm}{in the general case, the map $$\lambda^1_{D, V, L/K} \colon H^1(L/K , D) \longrightarrow \prod_{v \in V} H^1(L_w/K_v , D), \ \ w \vert v$$ is proper.}

\vskip2mm

\noindent $($In both parts, for each $v \in V$, we pick {\it one} extension $w$ to $L$.$)$

\end{thm}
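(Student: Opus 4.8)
The plan is to reduce everything to the case of tori, where we may invoke the properness of $\lambda^1_{T,V}$ established in \cite{RR-tori}. Throughout, write the basic d\'evissage exact sequence as $1 \to T \to D \to F \to 1$, where $T = D^{\circ}$ is a torus and $F = \pi_0(D)$ is a finite (\'etale) $K$-group; taking points over $L$ and passing to cohomology of the finite group $\Ga = \mathrm{Gal}(L/K)$ yields the exact sequence of pointed sets $H^i(L/K, T) \xrightarrow{\alpha} H^i(L/K, D) \xrightarrow{\beta} H^i(L/K, F)$, and likewise over each $K_v$. The first basic observation is that, since $F(L)$ and $\Ga$ are finite, every set $H^i(L/K, F)$ is finite; in particular $\beta$ has finite image.

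For part (i), I would first note that for a homomorphism of abelian groups properness is equivalent to finiteness of the kernel, so it suffices to prove that each $\Sha^i(D, V, L/K) := \ker \lambda^i_{D,V,L/K}$ is finite. The engine is a dimension-shifting argument: setting $R := \mathrm{Res}_{L/K}(D \times_K L)$, the group $R$ is again commutative with toric connected component, the adjunction unit gives an exact sequence $1 \to D \to R \to Q \to 1$ with $Q := R/D$ in the same class, and Shapiro's lemma shows $H^i(L/K, R) = 0$ for all $i \geq 1$, compatibly with localization (since $R \times_K K_v$ decomposes as a product of Weil restrictions along the $w \mid v$). Hence $H^i(L/K, D) \cong H^{i-1}(L/K, Q)$ and $H^i(L_w/K_v, D) \cong H^{i-1}(L_w/K_v, Q)$ compatibly with the global-to-local maps, so that $\Sha^i(D) \cong \Sha^{i-1}(Q)$. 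Iterating, the finiteness of $\Sha^i$ for all $i \geq 1$ is reduced to the case $i = 1$ for groups of the same type, which is the content of (ii) applied to commutative $D$.

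For part (ii), fix a finite subset $S \subseteq \prod_v H^1(L_w/K_v, D)$. Since $H^1(L/K, F)$ is finite, $\beta$ takes only finitely many values on $(\lambda^1_D)^{-1}(S)$, so I may fix $\eta \in H^1(L/K, F)$ and bound the corresponding fibre. Choosing a cocycle $\xi_0$ with $\beta(\xi_0) = \eta$ and twisting $D$ by it, the twisting bijection $H^1(L/K, {}_{\xi_0}D) \cong H^1(L/K, D)$ (compatible with localization) identifies $\beta^{-1}(\eta)$ with the image of $\alpha' \colon H^1(L/K, {}_{\xi_0}T) \to H^1(L/K, {}_{\xi_0}D)$, where ${}_{\xi_0}T$ is a twist of $T$ and hence again a torus; crucially, since the conjugation action of $D$ on $T$ factors through $F$, the torus ${}_{\xi_0}T$ depends only on $\eta$, so only finitely many tori occur. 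The matter thus comes down to showing that $\{z \in H^1(L/K, {}_{\xi_0}T) : \mathrm{loc}\,\alpha'(\lambda^1_{{}_{\xi_0}T}(z)) \in \tilde S\}$ is finite, where $\tilde S$ is the finite set obtained from $S$ by twisting.

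The main obstacle --- and the one new local input --- is controlling the kernel of the localized map $\mathrm{loc}\,\alpha' = \prod_v \alpha'_v$. I expect to prove, using good reduction, that for almost all $v \in V$ the map $({}_{\xi_0}D)(K_v) \to ({}_{\xi_0}F)(K_v)$ is surjective --- spreading ${}_{\xi_0}D$ out to a smooth group scheme over the model $\fX$ with toral identity component and applying Lang's theorem to the torus-torsor fibres over the finite residue fields, then lifting by Hensel --- so that the local connecting map vanishes and $\ker \alpha'_v$ is trivial. Since $\ker \alpha'_v$ is in any case a quotient of the finite group $({}_{\xi_0}F)(K_v)$, it follows that $\ker(\mathrm{loc}\,\alpha') = \prod_v \ker \alpha'_v$ is finite; consequently $(\mathrm{loc}\,\alpha')^{-1}(\tilde S)$ is finite, the values $\lambda^1_{{}_{\xi_0}T}(z)$ are confined to a finite set, and the properness of $\lambda^1_{{}_{\xi_0}T}$ for the torus ${}_{\xi_0}T$ (from \cite{RR-tori}, passed to finite level via the injectivity of inflation $H^1(L/K, {}_{\xi_0}T) \hookrightarrow H^1(K, {}_{\xi_0}T)$) forces $z$ into a finite set. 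Running this over the finitely many $\eta$ gives the finiteness of $(\lambda^1_D)^{-1}(S)$, as required.
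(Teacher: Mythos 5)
Your strategy is genuinely different from the paper's: the paper does not use the d\'evissage $1 \to T \to D \to F \to 1$ at all, but instead reformulates $\ker \lambda^1_{D,V,L/K}$ adelically as $\ker\bigl(H^1(L/K, D(L)) \to H^1(L/K, D(\mathbb{A}(L,V^L)))\bigr)$, shows that $H^1(L_w/K_v, D(\mathcal{O}_{L_w})) \to H^1(L_w/K_v, D(L_w))$ has trivial kernel for almost all $v$, and then invokes the triviality of the class set $\mathrm{cl}(D, L, V^L\setminus S)$ (Condition (T) from \cite{RR-tori}) to trap the kernel in the image of $H^1(L/K, E)$, where $E = D(L) \cap D(\mathbb{A}^{\infty}(L, V^L))$ is virtually a finitely generated abelian group. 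Your route instead uses the finiteness of the Tate--Shafarevich group of a torus as a black box, which is legitimate in principle; the problem is that it hinges on a local claim that you have not established and that I do not believe follows from the tools you invoke. First, Lang's theorem is inapplicable here: for a divisorial set $V$ of a finitely generated field $K$, the residue fields $K^{(v)}$ are finitely generated fields of one smaller dimension (for $K = \mathbb{Q}(t)$ they are number fields or fields $\mathbb{F}_p(t)$), not finite fields, so there is no reason for the torus-torsor fibres of $({}_{\xi_0}D)^{(v)} \to ({}_{\xi_0}F)^{(v)}$ to have rational points, and the proposed proof that $({}_{\xi_0}D)(K_v) \to ({}_{\xi_0}F)(K_v)$ is surjective for almost all $v$ collapses. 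Second, even granting such surjectivity, what your argument actually requires is that for almost all $v$ \emph{every} fibre of $\alpha'_v \colon H^1(L_w/K_v, {}_{\xi_0}T) \to H^1(L_w/K_v, {}_{\xi_0}D)$ over a point of $\tilde S$ be a singleton. Since $T$ is normal but not central in $D$, these fibres are not cosets of $\ker \alpha'_v$; after twisting they are images of connecting maps $\bigl({}_{\xi}D(L_w)/{}_{\xi}T(L_w)\bigr)^{\mathrm{Gal}(L_w/K_v)} \to H^1(L_w/K_v, {}_{\xi}T)$ computed at the \emph{finite} level, and killing them amounts to lifting $\mathrm{Gal}(L_w/K_v)$-invariants of $D(L_w)/T(L_w)$ to $D(K_v)$ --- an obstruction living in $H^1(L_w/K_v, T(L_w))$, which is typically nonzero even when $v$ is unramified in $L$ and $T$ has good reduction (it contains contributions of the shape $H^1(\mathrm{Gal}(L_w/K_v), \Gamma)$ with $\Gamma$ a nontrivial lattice of cocharacters). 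Without the almost-everywhere triviality of these fibres, $\prod_v \ker\alpha'_v$ need not be finite and the argument stops.

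Two further points. In part (i), the dimension shift requires the sequence $1 \to D(L) \to R(L) \to Q(L) \to 1$ of $\mathrm{Gal}(L/K)$-modules to be exact on the right; this is true, but only because the unit $D_L \to R_L \cong \prod_{\sigma} D_L$ is split by a projection, so that $H^1(L, D) \to H^1(L, R)$ is injective --- a point you should not pass over, and the same verification is needed at each completion. Similarly, $D(L) \to F(L)$ need not be surjective, so $F(L)$ must everywhere be replaced by the finite $\mathrm{Gal}(L/K)$-group $D(L)/T(L)$ (harmless, but necessary for the exact sequences you write down). Finally, since part (i) is reduced to part (ii) for commutative $D$, it inherits the gap above; note that the paper's treatment of part (i) is instead a direct cohomological computation with the sequence $1 \to E \to D(\mathbb{A}^{\infty}(L,V^L)) \times D(L) \to D(\mathbb{A}(L,V^L)) \to 1$, valid for all $i \geq 1$ without any dimension shifting.
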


\begin{thm}\label{T:2}
For $K$, $V$,  and $D$ as in Theorem \ref{T:1}, the map $$\lambda_{D , V} \colon H^1(K , D) \longrightarrow \prod_{v \in V} H^1(K_v , D)$$ is proper.
\end{thm}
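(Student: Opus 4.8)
The plan is to derive Theorem \ref{T:2} from the relative statement Theorem \ref{T:1}(ii) by reducing absolute Galois cohomology to cohomology relative to a single finite Galois extension. Two facts drive the reduction: first, $H^1(K,D)$ is the direct limit of the $H^1(L/K,D)$ over finite Galois extensions $L/K$ (ordered by inflation), and likewise for each completion; second, for $H^1$ the inflation maps are injective as maps of pointed sets (\cite[Ch.~I, \S 5]{Serre-GC}), both globally and over every $K_v$. Fix a finite subset $\mathcal{S} \subset \prod_{v \in V} H^1(K_v,D)$. I claim it suffices to find one finite Galois extension $L/K$ such that every $\xi \in \lambda_{D,V}^{-1}(\mathcal{S})$ is inflated from $H^1(L/K,D)$. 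Indeed, given such an $L$, the injectivity of local inflation singles out, for each component $\eta_v$ occurring in a tuple of $\mathcal{S}$ and split by $L_w$, a unique preimage in $H^1(L_w/K_v,D)$; assembling these yields a \emph{finite} subset $\widetilde{\mathcal{S}} \subset \prod_{v} H^1(L_w/K_v,D)$ (in bijection with the relevant tuples of $\mathcal{S}$) through which $\lambda^1_{D,V,L/K}$ carries all the lifts of the classes in $\lambda_{D,V}^{-1}(\mathcal{S})$. By Theorem \ref{T:1}(ii) the set $(\lambda^1_{D,V,L/K})^{-1}(\widetilde{\mathcal{S}})$ is finite, and since global inflation $H^1(L/K,D) \to H^1(K,D)$ is injective, $\lambda_{D,V}^{-1}(\mathcal{S})$ embeds into it and is therefore finite.

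The entire problem thus reduces to bounding the splitting field uniformly across the preimage. Only those tuples in $\mathcal{S}$ that actually lie in the image of $\lambda_{D,V}$ are relevant, and each such tuple is the family of localizations of some global class, hence is unramified outside a finite set of places. Spreading $D$ out to a smooth affine group scheme over an open subscheme of a model $\mathfrak{X}$ of $K$ fixes a finite $S_0 \subset V$ of bad reduction; enlarging it by the finitely many places at which some tuple of $\mathcal{S}$ ramifies produces a finite $S_1 \subset V$ with the property that every $\xi \in \lambda_{D,V}^{-1}(\mathcal{S})$ is unramified at all $v \notin S_1$, since unramifiedness at a good place is a property of the localization $\xi_v = \eta_v$ alone. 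Consequently the splitting field of each such $\xi$ is unramified outside $S_1$.

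It remains to bound the degree of these splitting fields. Since $D^{\circ}$ is a torus with a fixed splitting field $E/K$, which I may also take to trivialize the Galois action on the finite component group $F = D/D^{\circ}$, every class $\xi$ is split by an extension of degree at most $[E:K]\cdot|F|!$: the image of $\xi$ in $H^1(K,F)$ is split by a subextension of degree at most $|F|!$ over $E$, and over that field what remains of $\xi$ is governed by the cohomology of a torus split by $E$. Thus all splitting fields occurring in the preimage are Galois extensions of $K$ of bounded degree that are unramified outside the fixed finite set $S_1$. By the Hermite--Minkowski type finiteness for extensions of bounded degree with prescribed ramification over a finitely generated field equipped with a divisorial set (available through the methods of \cite{RR-tori}), there are only finitely many of them; their compositum is a single finite Galois extension $L/K$ splitting every $\xi$, which completes the reduction and proves the theorem.

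I expect the genuine difficulty to lie entirely in the uniform ramification control of the second and third paragraphs, that is, in upgrading the pointwise fact that each preimage class has \emph{some} finite splitting field to the uniform fact that one fixed $L$ splits them all. The two supporting inputs --- that unramifiedness of a cohomology class at a place of good reduction descends to its splitting field, and that bounded-degree extensions unramified outside a fixed finite set are finite in number --- are exactly where the divisorial nature of $V$ and the good-reduction machinery must be used in an essential way (and where, in the geometric setting, Serre's condition (F) enters to control constant-field extensions). Once these are secured, the injectivity of inflation for $H^1$ converts Theorem \ref{T:1}(ii) into Theorem \ref{T:2} in a purely formal manner.
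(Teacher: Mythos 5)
Your opening reduction is sound and is essentially the skeleton of the paper's argument: the paper likewise produces a single finite Galois extension $L/K$ through which every relevant class is inflated, and then invokes Theorem \ref{T:1}(ii) together with the injectivity of inflation (in the paper this is packaged as $\ker\theta = \upsilon(\ker\theta_L)$ after a twisting reduction to kernels). The genuine gap is in how you manufacture $L$. You bound the splitting field of each $\xi$ by its degree and by unramifiedness outside a finite set $S_1$, and then appeal to a Hermite--Minkowski finiteness for bounded-degree extensions unramified outside $S_1$. That finiteness statement is \emph{false} for a general divisorial set $V$ of a finitely generated field of characteristic $p>0$ once $p$ divides the degree bound: already for $K=\mathbb{F}_q(t)$ with $V$ the places of $\mathbb{A}^1$, the Artin--Schreier extensions $y^p-y=f(t)$ give infinitely many degree-$p$ Galois extensions unramified at every $v\in V$ (this is exactly the phenomenon behind Remark 6.3 and behind the failure of \cite[Theorem 1.1]{RR-tori} in positive characteristic discussed before Theorem \ref{T:Z1}). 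Since Theorem \ref{T:2} is asserted for all finitely generated fields, your compositum $L$ need not exist, and the degree bound $[E:K]\cdot|F|!$ is precisely of the form that lets $p$ intrude. Even in characteristic zero, the input you need is the smallness of $\pi_1^{\text{\'et}}$ of arithmetic schemes plus Zariski--Nagata purity, not really ``the methods of \cite{RR-tori}.''

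The repair is to use more than unramifiedness: the classes in $\lambda_{D,V}^{-1}(\mathcal{S})$ have \emph{prescribed} localizations, and this is what the paper exploits. Concretely, push $\xi$ to $H^1(K,\Omega)$ with $\Omega=D/D^{\circ}$; its image lies in the $\kappa$-preimage of a finite set, which is finite by Proposition \ref{P:1} --- and Proposition \ref{P:1} rests on Proposition \ref{P:2}, i.e.\ on the fact that $L_w=K_v$ for all $v\in V$ forces $L=K$ (Chebotarev / Hilbert irreducibility), a condition far stronger than unramifiedness and valid in every characteristic. This yields finitely many possibilities for the splitting field of the image of $\xi$ in $H^1(K,\Omega)$; taking $L$ to be the compositum of these with a splitting field of the torus $T=D^{\circ}$ and applying Hilbert~90 over $L$ (your third-paragraph argument, which is correct) kills $\xi$ over $L$ and completes your reduction. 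A secondary point: your inference that unramifiedness of the class $\xi$ at a good place forces its splitting field to be unramified there is not justified for noncommutative $D$ and should be routed through the finite quotient $\Omega$ as well.
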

As an application, we obtain the following finiteness result on the local-global conjugacy problem for maximal tori in any reductive group.
\begin{thm}\label{T:2A}
Let $G$ be a connected reductive group over a finitely generated field $K$, and let $V$ be a divisorial set of places of $K$. Fix a maximal $K$-torus $T$ of $G$ and let $\mathscr{C}(T)$ be the set of all maximal $K$-tori $T'$ of $G$ such that $T$ and $T'$ are $G(K_v)$-conjugate for all $v \in V$. Then $\mathscr{C}(T)$ consists of finitely many $G(K)$-conjugacy classes.
\end{thm}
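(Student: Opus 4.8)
The plan is to reduce the statement to the properness result of Theorem \ref{T:2} applied to the normalizer $N = N_G(T)$ of the fixed maximal torus. The crucial observation is that $N$ is precisely a group to which Theorem \ref{T:2} applies: since the centralizer $Z_G(T)$ of a maximal torus in a connected reductive group equals $T$ and $N/T$ is the finite Weyl group, the connected component $N^{\circ}$ coincides with $T$ and is therefore a torus. Next I would recall the standard cohomological description of conjugacy classes of maximal tori. All maximal tori of $G$ are conjugate over a separable closure $\overline{K}$, so the variety of maximal tori is identified with the homogeneous space $G/N$, and every maximal $K$-torus $T'$ corresponds to a $K$-point of $G/N$. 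From the exact sequence of pointed sets attached to $1 \to N \to G \to G/N \to 1$, one obtains a bijection between the set of $G(K)$-orbits on $(G/N)(K)$ and $\ker\left(H^1(K,N) \to H^1(K,G)\right)$ (see \cite[Ch. I]{Serre-GC}); under this bijection, the $G(K)$-conjugacy class of $T'$ is sent to a class $\theta(T') \in H^1(K,N)$, with $\theta(T')$ trivial if and only if $T'$ is $G(K)$-conjugate to $T$.

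The same construction carried out over each completion $K_v$ shows that $T'$ is $G(K_v)$-conjugate to $T$ if and only if the localization $\mathrm{res}_v(\theta(T')) \in H^1(K_v, N)$ is trivial. Consequently, a maximal $K$-torus $T'$ lies in $\mathscr{C}(T)$ if and only if $\theta(T')$ belongs to the Tate-Shafarevich group $\Sha(N, V) = \ker \lambda_{N, V}$ of the global-to-local map
$$
\lambda_{N, V} \colon H^1(K, N) \longrightarrow \prod_{v \in V} H^1(K_v, N).
$$
Since distinct $G(K)$-conjugacy classes give rise to distinct classes $\theta(T')$, the assignment $T' \mapsto \theta(T')$ induces an injection of the set of $G(K)$-conjugacy classes contained in $\mathscr{C}(T)$ into $\Sha(N, V)$.

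It then suffices to prove that $\Sha(N, V)$ is finite, and this is exactly where the content lies: because $N^{\circ} = T$ is a torus, Theorem \ref{T:2} guarantees that $\lambda_{N, V}$ is proper, so the preimage $\ker \lambda_{N, V} = \Sha(N, V)$ of the single distinguished element is finite. Together with the injection of the previous paragraph, this shows that $\mathscr{C}(T)$ meets only finitely many $G(K)$-conjugacy classes, as required. I expect the only delicate points to be essentially bookkeeping: verifying that $K$-points of $G/N$ genuinely correspond to maximal $K$-tori, so that no rationality obstruction is lost in passing to the homogeneous space, and checking the compatibility of the global and local instances of $\theta$ with the restriction maps. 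The genuine difficulty --- establishing properness of the global-to-local map for the \emph{disconnected} group $N$ --- has already been absorbed into Theorem \ref{T:2}, so modulo this standard cohomological dictionary the result is essentially formal.
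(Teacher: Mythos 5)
Your proposal is correct and follows essentially the same route as the paper: identify the variety of maximal tori with $G/N$ for $N = N_G(T)$, send each torus in $\mathscr{C}(T)$ to its class in $H^1(K,N)$, observe that this class lands in the kernel of the global-to-local map for $N$, and invoke Theorem \ref{T:2} (applicable since $N^{\circ}=T$) to conclude finiteness. The only cosmetic difference is that you phrase the cohomological dictionary via the kernel of $H^1(K,N)\to H^1(K,G)$, whereas the paper works directly with the fibers of the map $\mathscr{T}(F)\to H^1(F,N)$; the substance is identical.
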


\noindent (We note that over global fields, this result is due to P.~Gille and L.~Moret-Bailly \cite[Theorem 7.9]{Gil-Mor}; it was used by E.~Ullmo and A.~Yafaev \cite{Ul-Yaf} in their work on the Andr\'e-Oort conjecture.)

\vskip2mm

Second, we extend some  of the finiteness results from \cite{RR-tori} to a different class of fields. More precisely, let $K = k(X)$ be the function field of a normal irreducible variety $X$ over a field $k$ that satisfies Serre's condition (F) (see \S \ref{S:Funct} and references therein), and let $V$ be the set of discrete valuations associated with the prime divisors on $X$ (geometric places).

\begin{thm}\label{T:3}
With notations and conventions as above, if $k$ is a field of characteristic zero that satisfies condition $(\mathrm{F})$, then for any $d \geq 1$, there exist only finitely many $K$-isomorphism classes of $d$-dimensional $K$-tori that have good reduction at all $v \in V$.
\end{thm}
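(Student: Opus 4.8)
The plan is to use the standard anti-equivalence between $K$-tori and Galois lattices and reduce the statement to two finiteness assertions about the splitting field and the integral representations attached to it. Recall that a $d$-dimensional $K$-torus $T$ is determined up to isomorphism by its character lattice $X^*(T)$, a free $\Z$-module of rank $d$ equipped with a continuous action of $\Ga(K^{\mathrm{sep}}/K)$, and that $T \mapsto X^*(T)$ sets up a bijection between isomorphism classes of $d$-dimensional $K$-tori and isomorphism classes of such rank-$d$ Galois lattices. The action factors through the finite quotient $\Ga(L/K)$, where $L$ is the minimal splitting field of $T$ (the fixed field of the kernel), and the image is a finite subgroup $G \subseteq \mathrm{GL}_d(\Z)$. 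By Minkowski's bound, $|G|$ is bounded by a constant $N(d)$ depending only on $d$, so $[L:K] \leq N(d)$. Finally, I would use the fact that $T$ has good reduction at a discrete valuation $v$ if and only if $T$ splits over an unramified extension of $K_v$, equivalently the inertia group $I_v$ acts trivially on $X^*(T)$; hence good reduction at all $v \in V$ forces $L/K$ to be unramified at every $v \in V$.

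With this dictionary in place, the theorem follows from two independent finiteness statements. First, there are only finitely many extensions $L/K$ of degree $\leq N(d)$ that are unramified at all $v \in V$. Second, for each such fixed $L$, there are only finitely many isomorphism classes of rank-$d$ lattices carrying a $\Ga(L/K)$-action; this is precisely the Jordan--Zassenhaus theorem on the finiteness of the number of isomorphism classes of integral representations of bounded degree of a fixed finite group. Combining the two and summing over the finitely many possible $L$ yields finitely many lattices, hence finitely many tori.

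The heart of the matter, and the step where condition (F) and characteristic zero enter, is the first finiteness statement, which is a Hermite--Minkowski analogue for the function field $K$. I would argue geometrically. After replacing $k$ by the (finite, still (F)) algebraic closure of $k$ in $K$, we may assume $X$ is geometrically irreducible; passing to the regular locus $U \subseteq X$, whose complement has codimension $\geq 2$, we lose no prime divisors. By Zariski--Nagata purity of the branch locus, a finite extension of $K$ unramified at all prime divisors becomes, after normalization, finite \'etale over $U$; thus the extensions to be counted correspond to open subgroups of $\pi_1^{\mathrm{\acute{e}t}}(U)$ of index $\leq N(d)$. One then invokes the homotopy exact sequence $1 \to \pi_1^{\mathrm{\acute{e}t}}(\bar U) \to \pi_1^{\mathrm{\acute{e}t}}(U) \to \Ga(\bar k/k) \to 1$, where $\bar U = U \times_k \bar k$. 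In characteristic zero the geometric fundamental group $\pi_1^{\mathrm{\acute{e}t}}(\bar U)$ is topologically finitely generated (by comparison with the complex-analytic fundamental group of a smooth compactification minus a normal crossings divisor), so it has only finitely many open subgroups of each bounded index; and condition (F) on $k$ says exactly that $\Ga(\bar k/k)$ enjoys the same property.

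The main obstacle is therefore the concluding group-theoretic lemma: an extension $1 \to A \to \Gamma \to B \to 1$ of profinite groups, with $A$ topologically finitely generated and $B$ having only finitely many open subgroups of each bounded index, again has only finitely many open subgroups of each bounded index. I would prove this by counting continuous homomorphisms into $S_m$: since $A$ is topologically finitely generated, the intersection $N$ of all open subgroups of $A$ of index $\leq m!$ is open and characteristic, hence normal in $\Gamma$, and every map $\Gamma \to S_m$ kills $N$; this reduces the problem to the extension $1 \to A/N \to \Gamma/N \to B \to 1$ with \emph{finite} kernel. For finite kernel one counts using that property (F) passes to open subgroups of $B$ and makes the relevant cocycle sets $Z^1(\,\cdot\,, A/N)$ with finite coefficients finite, controlling the subgroups with prescribed intersection with $A/N$ and prescribed image in $B$. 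I expect the delicate points to be the uniform reduction to a finite kernel and the finiteness of these cocycle sets; it is here that topological finite generation of $\pi_1^{\mathrm{\acute{e}t}}(\bar U)$, and thus the hypothesis that $k$ has characteristic zero, is genuinely used — in positive characteristic the full geometric fundamental group fails to be topologically finitely generated, which is why that case must be treated separately.
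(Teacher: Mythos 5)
Your proposal is correct and follows essentially the same route as the paper: both identify $d$-dimensional tori with good reduction with rank-$d$ integral representations of $\Ga(K_V/K)$, use Zariski--Nagata purity to identify this Galois group with $\pi_1^{\text{\'et}}$ of (the regular locus of) $X$, and deduce that it is small from the homotopy exact sequence together with topological finite generation of the geometric fundamental group in characteristic zero and condition (F) on $k$. The only differences are cosmetic: the paper organizes the final count via the finitely many conjugacy classes of finite subgroups of $\mathrm{GL}_d(\Z)$ and the finiteness of the sets of continuous homomorphisms into each, rather than via Minkowski's bound plus Jordan--Zassenhaus, and it cites Harada--Hiranouchi (Lemma 2.7) for the extension lemma on small profinite groups that you propose to reprove by hand.
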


As we discussed in \cite[Remark 2.5]{RR-tori}, results about the finiteness of the number of $K$-isomorphism classes of algebraic $K$-tori of a given dimension $d \geq 1$ that have good reduction at all places $v \in V$  are no longer valid as stated in positive characteristic. More precisely, Theorem \ref{T:3} above and \cite[Theorem 1.1]{RR-tori} are {\it false} even for a global function field $K$ and an arbitrary divisorial set $V$. The situation can be fixed by considering some special sets $V$. We recall that a finitely generated field $K$ of positive characteristic can be presented as the function field $k(X)$ of a {\it complete} normal variety $X$ over a finite field $k$. Then the set $V$ of discrete valuations of $K$ associated with the prime divisors on $X$ will be called a {\it complete divisorial} set of places. It turns out that Theorem \ref{T:3} (hence also \cite[Theorem 1.1]{RR-tori}) extends to characteristic $p > 0$ if we assume that the divisorial set of places $V$ is complete --- see Theorem \ref{T:Z1}(i). On the other hand, this theorem remains valid for any divisorial set of places $V$ if one considers only those $K$-tori that split over an extension $K_T/K$ of degree prime to $p$ --- see Theorem \ref{T:Z1}(ii). These results are derived from a more general statement (see Theorem \ref{T:tori-GR}) that also subsumes the essential part of the proof of \cite[Theorem 1.1]{RR-tori}.

Next, over function fields $K = k(X)$ as above, we have the following finiteness result for the Tate-Shafarevich group.
\begin{thm}\label{T:4}
If $k$ has characteristic zero and is of type $(\mathrm{F})$, then for any $K$-torus $T$, the Tate-Shafarevich group $\text{\brus{ Sh}}(T , V)$ is finite.
\end{thm}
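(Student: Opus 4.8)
The plan is to reduce to a bounded-exponent problem with finite coefficients and then supply the needed finiteness from the \'etale cohomology of a model of $X$, which is finite by geometric finiteness together with condition $(\mathrm{F})$; in effect one re-runs the strategy of \cite{RR-tori} for finitely generated fields, replacing its arithmetic inputs (finiteness over $\Z$ and over finite fields) by inputs coming from condition $(\mathrm{F})$ --- this is necessary because $k$ need not be finitely generated (for instance $k$ could be a $p$-adic field), so that result does not apply directly. Concretely, I would fix a finite Galois extension $L/K$ splitting $T$ and set $m = [L:K]$, so that $\Sha(T,V) \subseteq H^1(K,T)$ is annihilated by $m$. As $\mathrm{char}\, k = 0$, the $m$-torsion $M := T[m]$ is a finite \'etale $K$-group, and the Kummer sequence $1 \to M \to T \xrightarrow{m} T \to 1$, over $K$ and over every $K_v$, identifies $\Sha(T,V)$ with the quotient of the Selmer group
$$
\mathrm{Sel}(M,V) = \bigl\{ \alpha \in H^1(K,M) : \alpha_v \in \mathrm{im}\bigl(T(K_v) \to H^1(K_v,M)\bigr) \text{ for all } v \in V \bigr\}
$$
by the image of $T(K)$. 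Thus it suffices to prove that $\mathrm{Sel}(M,V)/\mathrm{im}\,T(K)$ is finite.

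The key new ingredient, which takes the place of finiteness over $\Z$ in \cite{RR-tori}, is the finiteness of the \'etale cohomology of a model with finite coefficients. After deleting a finite set $S$ of prime divisors, $T$ spreads out to a torus $\mathcal{T}$ over a smooth model $U$ of $X$ over $k$ (resolution of singularities is available in characteristic zero), and $M$ extends to the locally constant constructible sheaf $\mathcal{M} = \mathcal{T}[m]$ on $U$. Then all the groups $\he^i(U,\mathcal{M})$ are finite: the Hochschild--Serre spectral sequence for $U \to \mathrm{Spec}\, k$ --- equivalently for $1 \to \Ga(\bar K/\bar k K) \to \Ga(\bar K/K) \to \Ga(\bar k/k) \to 1$ --- assembles them from the geometric groups $\he^j(U_{\bar k},\mathcal{M})$, which are finite because they are \'etale cohomology with finite coefficients of a variety over an algebraically closed field, and the arithmetic groups $H^i(k,-)$ of these finite modules, which are finite precisely because $k$ is of type $(\mathrm{F})$. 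The same mechanism gives finiteness of $\mathcal{T}(U)/m$ and of the torsion and cotorsion of $\mathrm{Pic}(U)$.

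It remains to relate $\Sha(T,V)$ to these finite groups, and this is where I expect the main difficulty to lie. Using the localization (residue) sequences attached to the prime divisors of $U$ --- all of which are seen because $V$ is divisorial --- the everywhere-local-triviality of a class in $\Sha(T,V)$ constrains its residues to lie in $X_*(T)$ modulo $m$. The technical heart of the matter is that the Selmer condition does \emph{not} force a class to be unramified: in the cocharacter directions a Selmer class may ramify at many places, and these ramified contributions are cancelled only up to finite index by the image of the global points $T(K)$. Bounding this cokernel --- the residue map modulo $\mathrm{im}\,T(K)$ --- by combining the finiteness of $\he^\bullet(U,\mathcal{M})$, $\mathcal{T}(U)/m$ and $\mathrm{Pic}(U)$ with the divisorial nature of $V$ is the crux; once it is carried out, $\mathrm{Sel}(M,V)/\mathrm{im}\,T(K) = \Sha(T,V)$ is finite, and condition $(\mathrm{F})$, which keeps every arithmetic contribution finite, is indispensable throughout.
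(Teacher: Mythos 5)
Your proposal correctly sets up a reduction (multiplication by $m=[L:K]$, the Kummer sequence, and the identification of $\Sha(T,V)$ with $\mathrm{Sel}(M,V)/\mathrm{im}\,T(K)$ for $M=T[m]$), and your finiteness claims for $\he^i(U,\mathcal{M})$, $\mathcal{T}(U)/m$ and $\mathrm{Pic}(U)$ via Hochschild--Serre and condition $(\mathrm{F})$ are sound. But the argument stops exactly where the proof has to happen: you yourself flag that the Selmer condition does not force a class of $M$ to be unramified, that the ramified contributions in the cocharacter directions must be cancelled by $\mathrm{im}\,T(K)$ only up to finite index, and that bounding this cokernel ``is the crux'' to be ``carried out.'' No argument is given for that step, so what you have is a reduction of the theorem to an unproved finiteness statement about the residue map modulo $\mathrm{im}\,T(K)$, not a proof. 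This is a genuine gap, not a routine verification: controlling the ramified part of the Selmer group is precisely the content of the theorem in this formulation.

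The paper avoids this difficulty entirely by never descending to the $m$-torsion. A class $\xi\in\Sha(T,V)$ is \emph{trivial} in $H^1(K_{v_x},T)$ for every codimension-one point $x$ of a smooth affine $U\subset X$ over which $T$ extends to a torus $\mathbb{T}$; in particular its local image lies in the image of $\he^1(\hat{\mathcal{O}}_{v_x},\mathbb{T})\to H^1(K_{v_x},T)$. Harder's lemma (\cite[Lemma 4.1.3]{Harder}) then puts $\xi$ in the image of $\he^1(\mathcal{O}_x,\mathbb{T})\to H^1(K,T)$ for each such $x$, purity for tori over regular schemes (\cite[Proposition 4.1]{CTS78}, \cite[Corollaire 6.9]{CTS79}) upgrades this to $\xi\in\mathrm{im}\bigl(\he^1(U,\mathbb{T})\to H^1(K,T)\bigr)$, and that image is finite for $k$ of type $(\mathrm{F})$ by \cite[Proposition 3.3]{CTGP}. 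So the unramifiedness that you could not extract from the Selmer condition at the level of $M$ is automatic at the level of $T$, and the only place condition $(\mathrm{F})$ enters is the final finiteness of $\he^1(U,\mathbb{T})$'s image. If you want to salvage your route, you would need to prove the missing bound directly (essentially redoing the second proof of \cite[Theorem 1.2]{RR-tori} with the arithmetic inputs replaced as you describe); the Harder--purity--CTGP chain is the shortcut that makes this unnecessary.
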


We also completely resolve the question of the properness of the global-to-local map for finite Galois modules over function fields $K = k(X)$ in all characteristics without any assumptions on the base field --- see Proposition \ref{P:Finite2} and Remark 6.3.


Finally, we have the following finiteness result for forms of absolutely almost simple groups with good reduction over the function fields of
complex surfaces.
\begin{thm}\label{T:CSurf}
Let $K = k(S)$ be the function field of a smooth surface $S$ over an algebraically closed field $k$ of characteristic zero, and let $V$ be the set of discrete valuations of $K$ associated with the prime divisors of $S$. Then for any absolutely almost simple simply connected algebraic $K$-group $G$, the set of $K$-isomorphism classes of $K$-forms $G'$ of $G$ that have good reduction at all $v \in V$ is finite.
\end{thm}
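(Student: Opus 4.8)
The plan is to classify the $K$-forms of $G$ by the pointed set $H^1(K , \mathrm{Aut}(G))$ and to cut this set down using the good reduction hypothesis, exploiting the exact sequence
$$1 \longrightarrow \bar{G} \longrightarrow \mathrm{Aut}(G) \longrightarrow \Sigma \longrightarrow 1,$$
where $\bar{G} = G/C$ is the adjoint group (with $C = Z(G)$ the finite center, étale since $\mathrm{char}\, k = 0$) and $\Sigma = \mathrm{Out}(G)$ is the finite group of symmetries of the Dynkin diagram. The induced map $H^1(K , \mathrm{Aut}(G)) \to H^1(K , \Sigma)$ records the outer type, and I would first show that only finitely many outer types occur among forms with good reduction, then bound the inner forms within each type.

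For the outer type, a form $G'$ with good reduction at $v$ is unramified at $v$, so its image in $H^1(K , \Sigma)$ lies in the subset of classes unramified at every $v \in V$. Since $\Sigma$ is finite (constant after a finite extension) and $S$ is smooth over an algebraically closed field of characteristic zero, purity identifies this unramified subset with $\he^1(S , \Sigma)$, which is finite by the finiteness of the étale cohomology of $S$ with finite coefficients (the same kind of finiteness that underlies our treatment of finite Galois modules, cf. Proposition \ref{P:Finite2}). Hence there are finitely many outer types, and for each I may fix an unramified cocycle $z$ representing it, twist, and reduce to counting good-reduction classes in the image of $H^1(K , {}^{z}\bar{G})$, where ${}^{z}\bar{G}$ is an adjoint $K$-group of the same absolute type that again has good reduction at all $v \in V$.

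For the inner forms within a fixed outer type, I would use the Tits boundary map $\delta \colon H^1(K , {}^{z}\bar{G}) \to H^2(K , {}^{z}C)$ arising from $1 \to {}^{z}C \to {}^{z}G \to {}^{z}\bar{G} \to 1$. The key point is that $\delta$ is injective: for every inner twist $G''$ of ${}^{z}G$ (all of which are absolutely almost simple and simply connected) one has $H^1(K , G'') = 1$ by the validity of Serre's Conjecture II over $K$ — a theorem for function fields of surfaces over an algebraically closed field — and the standard twisting argument then shows that the fibers of $\delta$ are singletons. Thus the inner forms of a given outer type inject, via the Tits class, into $H^2(K , {}^{z}C)$. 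A form with good reduction has an everywhere-unramified Tits class, so its image lies in the subgroup of $H^2(K , {}^{z}C)$ consisting of classes unramified at all $v \in V$; since ${}^{z}C$ is finite of multiplicative type and $S$ is smooth, purity (via the Gersten/Bloch--Ogus resolution) identifies this subgroup with $\he^2(S , {}^{z}C)$, which is finite. Combining this with the finiteness of the set of outer types yields the assertion.

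The \emph{main obstacle} is two-fold. First, one must verify the compatibility that good reduction of $G'$ at $v$ forces both its outer type and its Tits class to be unramified at $v$; this requires spreading out the central extension and its boundary maps over the local ring $\cO_v$ and using that the good-reduction model is a reductive $\cO_v$-group scheme whose center and Dynkin data extend. Second — and this is the genuinely deep external input — is the injectivity of $\delta$, which rests on Conjecture II over $K$: without the vanishing of $H^1(K , \cdot)$ for simply connected groups the fibers of the Tits map need not be trivial, and one would instead have to bound strongly inner forms directly. The residual finiteness statements are then purely geometric consequences of the finiteness of the étale cohomology of $S$ with finite coefficients.
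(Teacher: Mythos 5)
Your proposal is correct and follows essentially the same skeleton as the paper's proof: reduce to finitely many quasi-split outer forms using the fact that the relevant degree-$\leq 6$ extension (equivalently, the class in $H^1(K,\Sigma)$) is unramified and that $\pi_1^{\mathrm{\acute{e}t}}(S)$ is topologically finitely generated; then use Serre's Conjecture II over $K$ to make the boundary map $H^1(K,\overline{G}) \to H^2(K,F)$ injective; and finish with purity plus the finiteness of $\he^2(S,\mathbb{F})$ for the finite center. The one place where the paper's route differs from yours --- and it neatly disposes of what you flag as your ``main obstacle'' --- is the local step: rather than spreading out the central extension over $\mathcal{O}_v$ to show the Tits class is unramified, the paper observes that the residue field $K^{(v)}$ is the function field of a complex curve, hence of cohomological dimension $\leq 1$ by Tsen, so by Steinberg's theorem the (reductive) reduction of $G'$ is quasi-split and, by Hensel, $G'$ is quasi-split over $K_v$; thus the class in $H^1(K,\overline{G})$ is actually \emph{locally trivial} at every $v \in V$, and one concludes via the commutative square relating $\lambda_{\overline{G},V}$ and $\lambda^2_{F,V}$ that the Tits class lies in $\ker\lambda^2_{F,V}$, which purity places in the (finite) image of $\he^2(S,\mathbb{F})$. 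This is a slightly stronger statement than unramifiedness and avoids any integral model considerations beyond the definition of good reduction; your spreading-out argument would also work but requires the compatibility checks you mention. One small imprecision on your side: for the outer types, $\Sigma$ may be nonabelian (e.g.\ $S_3$ for type $D_4$), so the finiteness of the unramified part of $H^1(K,\Sigma)$ is really a consequence of the smallness (finite generation) of $\pi_1^{\mathrm{\acute{e}t}}(S)$ rather than of finiteness theorems for \'etale cohomology with finite abelian coefficients --- which is exactly the form in which the paper invokes it.
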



\section{Theorem \ref{T:2}: the case of a finite group}

The goal of this section is to prove the following.

\begin{prop}\label{P:1}
Assume that $K$ is a finitely generated field equipped with a divisorial set of places $V$, and
let $\Omega$ be a finite (but not necessarily commutative) Galois module\footnotemark. Then the map
$$
H^1(K , \Omega) \stackrel{\kappa}{\longrightarrow} \prod_{v \in V} H^1(K_v , \Omega)
$$
is proper.
\end{prop}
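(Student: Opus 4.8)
The plan is to translate the cohomological statement into one about finite field extensions of $K$ with controlled ramification, and then to invoke a Hermite--Minkowski-type finiteness theorem for the finitely generated field $K$. Since $\Omega$ is finite, the action of $\Ga(K_{\mathrm{sep}}/K)$ on $\Omega$ factors through $\Ga(L/K)$ for some finite Galois extension $L/K$. A continuous $1$-cocycle $c \colon \Ga(K_{\mathrm{sep}}/K) \to \Omega$ is then the same as a continuous homomorphism $\tilde{c} \colon \Ga(K_{\mathrm{sep}}/K) \to \Omega \rtimes \Ga(L/K)$ lifting the projection onto $\Ga(L/K)$, and its kernel cuts out a finite Galois extension $E_c/K$ with $\Ga(E_c/K) \hookrightarrow \Omega \rtimes \Ga(L/K)$; in particular $[E_c : K] \leq |\Omega| \cdot [L:K] =: N$, a bound independent of $c$. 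Thus every class in $H^1(K,\Omega)$ is split by an extension of degree $\leq N$, and for any fixed finite Galois $E/K$ the classes split by $E$ form a subset of the finite set $H^1(\Ga(E/K), \Omega)$ via inflation. Consequently, to prove finiteness of $\kappa^{-1}(S)$ for a finite $S \subseteq \prod_{v} H^1(K_v,\Omega)$, it suffices to show that the splitting fields $E_c$, for $c \in \kappa^{-1}(S)$, lie among finitely many extensions of $K$.

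First I would fix a finite subset $V_0 \subseteq V$ outside of which all the relevant data are unramified. Replacing $S$ by $S \cap \im \kappa$ (which has the same preimage), I choose for each $s \in S$ a single preimage $c^{(s)} \in \kappa^{-1}(s)$; being a global class, each $c^{(s)}$ is unramified at all but finitely many $v \in V$. Let $V_0$ be the union of these finitely many bad sets together with the places where $L/K$ ramifies and the places of bad reduction of a chosen model $\fX$ of $K$. The point is that for $v \notin V_0$ the inertia group acts trivially on $\Omega$, so $E_c/K$ is unramified at $v$ precisely when the local restriction $\mathrm{res}_v(c)$ is unramified.

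Next I would use the finiteness of $S$ to bound ramification uniformly. For any $c \in \kappa^{-1}(S)$ with $\kappa(c) = s$ and any $v \notin V_0$, one has $\mathrm{res}_v(c) = s_v = \mathrm{res}_v(c^{(s)})$, and the latter is unramified by the choice of $V_0$. Hence $E_c/K$ is unramified at every $v \notin V_0$, while $[E_c : K] \leq N$.

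The main obstacle, and the essential input, is then the finiteness of the set of finite separable extensions of $K$ of bounded degree that are unramified outside the fixed finite set $V_0$; equivalently, the topological finite generation of the étale fundamental group of the open subscheme of $\fX$ obtained by removing $V_0$ and the bad locus. Granting this, only finitely many fields $E_c$ occur, and combined with the finiteness of $H^1(\Ga(E/K),\Omega)$ for each of them, we conclude that $\kappa^{-1}(S)$ is finite. I expect the verification of this Hermite--Minkowski-type statement over a finitely generated base field, in particular the uniform control of (possibly wild) ramification in positive characteristic, to be the crux; the remaining steps are formal manipulations of non-abelian cocycles that go through verbatim for non-commutative $\Omega$.
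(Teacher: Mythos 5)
Your formal reductions are all sound: bounding $[E_c:K]$ by $|\Omega|\cdot[L:K]$, the equivalence between ``$E_c/K$ is unramified at $v$'' and ``$\mathrm{res}_v(c)$ is an unramified class'' for $v$ outside $V_0$ (valid because inertia then acts trivially on $\Omega$), and the finiteness of the set of classes split by a fixed finite Galois extension. The gap is exactly where you located the crux, but it is worse than a technical difficulty: the Hermite--Minkowski statement you need --- finiteness of the separable extensions of $K$ of degree $\leq N$ unramified at all $v\in V\setminus V_0$ --- is \emph{false} for finitely generated fields of positive characteristic, whereas Proposition \ref{P:1} is asserted for all finitely generated fields. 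Already for $K=\mathbb{F}_p(t)$ with $V$ the set of places corresponding to the closed points of $\mathbb{A}^1_{\mathbb{F}_p}$ (a divisorial set), the Artin--Schreier extensions $y^p-y=f(t)$, $f$ a polynomial, give infinitely many distinct degree-$p$ extensions that are unramified at every $v\in V$ (they ramify only at infinity); equivalently, $\pi_1^{\text{\'et}}$ of an affine curve in characteristic $p$ is not small. No enlargement of $V_0$ fixes this. The information you discard when weakening ``$\mathrm{res}_v(c)=s_v$ for all $v\in V$'' to ``$E_c/K$ is unramified outside $V_0$'' is precisely what saves the proposition: in the example above, only the trivial Artin--Schreier extension splits completely at all $v\in V$ (Chebotarev), so the kernel is still trivial even though infinitely many candidate splitting fields exist.

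The paper's proof never invokes Hermite--Minkowski: twisting reduces properness to the finiteness of $\ker\kappa$ for every twist of $\Omega$; inflation--restriction reduces to the case of a trivial module over a suitable finite Galois extension of $K$; and a class in the kernel then corresponds to a finite Galois extension $L/K$ with $L_w=K_v$ for \emph{all} $v\in V$ and $w\mid v$, which forces $L=K$ by Proposition \ref{P:2} (Chebotarev for global fields, Hilbert irreducibility in higher dimension). Your argument, supplemented by the smallness of fundamental groups of arithmetic schemes in mixed characteristic (Harada--Hiranouchi, after Faltings), does yield a correct proof in characteristic zero --- and in fact the stronger conclusion that only finitely many splitting fields $E_c$ occur --- but as written it does not cover positive characteristic.
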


\footnotetext{I.e., a finite group with a continuous action of the absolute Galois group $\mathrm{Gal}(K^{\mathrm{sep}}/K)$.}

The proof relies on the following (known) result, which is of independent interest.
\begin{prop}\label{P:2}
Let $K$ be an infinite finitely generated field and $V$ be a divisorial set of places of $K$. If a finite separable extension $L/K$ satisfies $L_w = K_v$ for all $v \in V$ and $w \vert v$, then $L = K$.
\end{prop}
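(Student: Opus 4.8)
The plan is to prove the contrapositive: if $L/K$ is a nontrivial finite separable extension, then some $v \in V$ does not split completely, i.e. $L_w \neq K_v$ for some $w \mid v$. First I would reduce to the Galois case by replacing $L$ with its Galois closure $M/K$, setting $G = \mathrm{Gal}(M/K)$ and $H = \mathrm{Gal}(M/L)$. Since the core $\bigcap_{g \in G} gHg^{-1}$ is trivial for a Galois closure, a place $v$ splits completely in $L$ at all $w \mid v$ if and only if $\mathrm{Frob}_v = 1$, i.e. if and only if $v$ splits completely in $M$. As $L \neq K$ forces $M \neq K$, it therefore suffices to show that a nontrivial finite Galois extension cannot split completely at every $v \in V$. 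Throughout I fix a normal model $\mathfrak{X}$ of finite type over $\mathbb{Z}$ whose prime divisors furnish $V$.

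The argument then splits according to the ``size'' of $K$. If $K$ is a global field (a number field, or the function field of a curve over a finite field), the model $\mathfrak{X}$ has dimension one and $V$ is cofinite among all places; here I would invoke the classical Chebotarev density theorem, which yields a positive density of places with nontrivial Frobenius in $G \neq 1$, none of which split completely, contradicting the hypothesis.

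In the remaining (and main) case I would fiber $\mathfrak{X}$ into curves. Choose a separating transcendence basis of $K$ over its prime field, and let $K_0 \subset K$ be the algebraic closure in $K$ of the subfield generated by all but one of these elements; then $K/K_0$ has transcendence degree one and, crucially, $K_0$ is an infinite finitely generated field, hence Hilbertian (Fried--Jarden). Writing $K$ as a finite separable extension of $K_0(t)$, the extension $L/K_0(t)$ is finite, separable and nontrivial. Intersecting the Hilbert sets attached to $K/K_0(t)$ and to $L/K_0(t)$, Hilbert's irreducibility theorem over $K_0$ produces infinitely many $a \in K_0$ at which both specialize irreducibly; for such $a$ the unique place $v_a$ of $K$ lying over $t = a$ has residue extension of degree $[L:K] > 1$ inside $L$, so $v_a$ does not split completely. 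Since $t$ defines a fibration of $\mathfrak{X}$ with one-dimensional fibers, the center of $v_a$ on $\mathfrak{X}$ is a prime divisor for all but finitely many $a$, so some such $v_a$ lies in $V$, giving the desired contradiction.

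The main obstacle will be the final bookkeeping step: ensuring that the non-split place manufactured by Hilbert irreducibility is genuinely a prime divisor of the \emph{given} model $\mathfrak{X}$ (so that it lies in the prescribed set $V$, not merely in some divisorial set), together with correctly tracking the local splitting behaviour up the tower $K_0(t) \subset K \subset L$ and maintaining uniformity in mixed characteristic. A secondary point to verify is that $K_0$ is genuinely infinite; this is exactly where the hypothesis that $K$ be infinite enters, the only failure being when $K$ is a global function field, which is precisely the case dispatched by Chebotarev above.
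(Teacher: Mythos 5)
Your strategy is in essence the paper's: reduce to the Galois case, dispatch global fields with Chebotarev, and otherwise use Hilbert irreducibility to manufacture a divisorial place whose residue degree in $L$ equals $[L:K]$, so that $[L_w:K_v]=[L:K]$. The difference is organizational. The paper takes as Hilbertian base the \emph{ring} $\mathbb{Z}$ in characteristic zero (specializing all of $x_1,\dots,x_r$ to integers and then observing that the partial specialization $g(x_1^0,x_2,\dots,x_r,y)$ stays irreducible over $\mathbb{Q}(x_2,\dots,x_r)$) and the ring $k[x_1]$ in positive characteristic, whereas you invoke Hilbertianity of the infinite finitely generated field $K_0$ and specialize the single variable $t$. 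Both produce the same kind of divisor with residue field of the correct transcendence degree, and your residue-degree computation via multiplicativity of $f$ is the same as the paper's (indeed slightly cleaner, since you impose irreducible specialization for both $K/K_0(t)$ and $L/K_0(t)$ rather than squeezing the inequalities).

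The step you flag as the main obstacle is, however, exactly where the paper does its real work, and your sketch does not close it. Two points. First, Hilbertianity of the \emph{field} $K_0$ only gives $a\in K_0$; to conclude that infinitely many of the places $v_a$ live on a single model of $K$ (whence, since the divisorial sets attached to any two models differ by only finitely many valuations, some $v_a$ lies in the prescribed $V$), you need the specializations to land in a fixed finitely generated subring of $K_0$ --- i.e., the Hilbertian-\emph{ring} form of the irreducibility theorem, which is precisely the version of \cite[13.4]{FJ} the paper quotes for $\mathbb{Z}$ and for $k[x_1]$. Second, even for a single good $a$, the claim that the center of $v_a$ on $\mathfrak{X}$ has codimension one does not follow from ``$t$ fibres $\mathfrak{X}$ into curves''; the paper proves it by choosing $h$ with $A_h$ integral over $B_h$ and insisting that the specialization avoid the zero locus of $h$, which forces the valuation ring of $v_a$ to contain $A_h$ and hence its center on the normal scheme $\mathrm{Spec}\, A_h$ to be a height-one prime. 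Both repairs are routine, so I would grade the proposal as correct in outline with the decisive bookkeeping lemma asserted rather than proved.
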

\begin{proof}
Without loss of generality, we may assume that $L/K$ is a Galois extension; let $\mathscr{G} = \mathrm{Gal}(L/K)$ be its Galois group.

First, let us consider the case where $K$ is a global field; then $V$ consists of almost all nonarchimedean places of $K$. Assume that $L \neq K$ and pick a nontrivial automorphism $\sigma \in \mathscr{G}$. It follows from Chebotarev's Density Theorem (cf. \cite[Ch. VII, 2.4]{ANT}) that there exists $v \in V$ such that some for $w \vert v$, the extension $L_w/K_v$ is unramified and has $\sigma$ as its Frobenius automorphism. Then in particular $L_w \neq K_v$, a contradiction.

We now turn to the situation where the field $K$ is not global and first consider the case where $K$ has characteristic zero. We may assume that $V$ consists of the discrete valuations corresponding to the prime divisors on a model $\mathfrak{X} = \mathrm{Spec}\: A$, where $A$ is a finitely generated integrally closed $\mathbb{Z}$-algebra with fraction field $K$. Since $K$ is not global, it has transcendence degree $r \geq 1$ over $\mathbb{Q}$, so one can find $x_1, \ldots , x_r \in A$ that form a transcendence basis of $K/\mathbb{Q}$. Set $B = \mathbb{Z}[x_1, \ldots , x_r]$ and $F = \mathbb{Q}(x_1, \ldots , x_r)$. We can find a nonzero polynomial $h \in \mathbb{Z}[x_1, \ldots , x_r]$ such that the localization $A_h$ is integral over $B_h$.

Next, pick a primitive element $\alpha$ of $L$ over $K$ that is integral over $B$. We can write its minimal polynomial $g \in B[y]$ over $K$ as $g = g(x_1, \ldots , x_r, y) \in \mathbb{Z}[x_1,\ldots , x_r, y]$.
Since $g$ is irreducible in $\mathbb{Z}[x_1, \ldots , x_r, y]$, by the version of Hilbert's Irreducibility Theorem given in \cite[13.4]{FJ}, there exists an $r$-tuple $(x_1^0, \ldots , x_r^0) \in \mathbb{Z}^r$ such that $h(x_1^0, \ldots , x_r^0) \neq 0$ and the polynomial $g(x_1^0, \ldots , x_r^0, y) \in \mathbb{Q}[y]$ is irreducible. Then the polynomial $$\varphi := g(x_1^0, x_2, \ldots , x_r) \in \mathbb{Q}[x_2, \ldots , x_r, y]$$ is also irreducible. Indeed, the leading coefficient of $\varphi$ as a polynomial in $y$ is 1, so $\varphi$ has content 1 as a polynomial in $\mathbb{Q}[x_2, \ldots , x_r][y]$. This means that  any  factor in a possible factorization of $\varphi$ must have a positive $y$-degree. Since  the specialization $\varphi(x_2^0, \ldots , x_r^0, y) = g(x_1^0, \ldots , x_r^0, y)$ is irreducible, $\varphi$ itself is irreducible.

Let $v_0$ be a discrete valuation of $F$ corresponding to the irreducible polynomial $x_1 - x_1^0$. Since $h(x_1^0, \ldots , x_r^0) \neq 0$, we see that $h$ is relatively prime to $x_1 - x_1^0$, so $B_h$ is contained in the valuation ring of $v_0$. As $A_h$ is integral over $B_h$, we conclude that $A_h$ is contained in the valuation ring of an extension $v$ of $v_0$ to $K$, hence $v \in V$. Furthermore, $\alpha$ is contained in the valuation ring of an extension $w$ of $v$ to $L$. We can view $\varphi$ as a polynomial over the residue field $F^{(v_0)} = \mathbb{Q}(x_2, \ldots , x_r)$. We note that the image $\bar{\alpha}$ of $\alpha$ in the residue field $L^{(w)}$ satisfies $\varphi$, and since $\varphi$ is irreducible of degree $\deg_y \varphi = \deg_y g$, we conclude that the residual degree $f(w \vert v_0)$ equals $[L : F]$. As $f(w \vert v_0) = f(w \vert v) f(v \vert v_0)$ and $f(w \vert v) \leq [L : K]$ and $f(v \vert v_0) \leq [K:F]$, it follows that $f(w \vert v) = [L : K]$, and therefore $[L_w : K_v] = [L : K]$. So, our assumption that $L_w = K_v$ yields that $L = K$.

In order to treat the positive characteristic case, we observe that any finitely generated field $K$ of positive characteristic can be presented as the function field $K = k(X)$ of a geometrically integral normal variety $X$ over a finite field $k$, and then we may assume that $V$ consists of the discrete valuations associated with the prime divisors of $X$. Since the case of global fields has already been considered, we may assume that $\dim X \geq 2$. So, we conclude the proof of Proposition \ref{P:2} by applying the following more general statement.

\begin{prop}\label{P:FFF3}
Let $K = k(X)$ be the function field of a normal geometrically integral variety $X$ of dimension $\dim X \geq 2$ defined over a field $k$, and let $V$ be the set of discrete valuations of $K$ associated with the prime divisors of $X$. If $L/K$ is a finite separable extension such that $L_w = K_v$ for all $v \in V$ and $w \vert v$, then $L = K$.
\end{prop}

The argument is similar to the characteristic zero case in the proof of Proposition \ref{P:2}. Since $X$ is geometrically integral, it is, in particular, geometrically reduced, and hence $K$ is a separable extension of $k$ (cf. \cite[Lemma 10.44.1]{Stacks}). Thus, there exists a separating transcendence basis $x_1, \ldots, x_r$,
so that $K$ is a finite separable extension of $F = k(x_1, \ldots , x_r)$ (cf. \cite[2.6]{FJ}). Note that $r \geq 2$ by assumption. Replacing $X$ by an open subset (which may only reduce $V$), we may assume that $X$ is affine and $x_1, \ldots , x_r$ lie in the algebra of regular functions $A = k[X]$. Set $R = k[x_1]$ and $B = k[x_1, \ldots , x_r] = R[x_2, \ldots , x_r]$. We can find a nonzero polynomial $h \in B$ such that the localization $A_h$ is integral over $B_h$. Since, by construction, the extension $L/F$ is separable, we can choose a primitive element $\alpha$ for this extension that is integral over $B$. Let $g \in B[y]$ be the minimal polynomial of $\alpha$ over $F$. Then $g$ can be viewed as a polynomial in $R[x_2, \ldots , x_r, y]$, which is separable in $y$. According to \cite[Proposition 13.4.1]{FJ}, the ring $R$ is Hilbertian. Thus, one can find $(x_2^0, \ldots , x_r^0) \in R^{r-1}$ so that the polynomial $g(x_2^0, \ldots , x_r^0, y) \in k(x_1)[y]$ is irreducible and $h(x_2^0, \ldots , x_r^0) \neq 0$. Then the same argument as above shows that the valuation $v_0$ of $F$ associated with $x_2 - x_2^0$ extends to a valuation $v \in V$, and then for $w \vert v$ we have $[L_w : K_v] = [L : K]$, implying that $L = K$.
\end{proof}

\noindent {\bf Remark 2.4.} The assertion of Proposition \ref{P:2} was stated in geometric language by Raskind (see \cite[Lemma 1.7]{Rask}). The argument he sketches is based on the consideration of $\zeta$-functions. The above proof based on Hilbert's Irreducibility Theorem is of a somewhat more general nature as it applies to function fields of algebraic varieties over not necessarily finitely generated fields. This will be used in \S \ref{S:local-global}.

\vskip1mm

\noindent {\it Proof of Proposition \ref{P:1}.}
Using twisting (cf. \cite[Ch. I, 5.3]{Serre-GC}), we see that it is enough to establish the finiteness of $\ker \kappa$ for any finite Galois module $\Omega$. Furthermore, for any Galois extension $L/K$, we have the following inflation-restriction exact sequence in non-commutative cohomology (cf. {\it loc. cit.}, Ch.~I, 5.8):
$$
1 \to H^1(L/K , \Omega) \longrightarrow H^1(K , \Omega) \longrightarrow H^1(L , \Omega).
$$
If $L/K$ is a finite Galois extension, then the set $H^1(L/K , \Omega)$ is also finite, and it is enough to show that the kernel of the map $$H^1(L , \Omega) \to \prod_{w \in V^L} H^1(L_w , \Omega),$$ where $V^L$ consists of all extensions of places in $V$ to $L$, is finite. Thus, replacing $K$ by a suitable finite Galois extension, we may assume that $\Omega$ is a trivial Galois module over $K$. Let us show that in this case, $\ker \kappa$ is actually trivial.

Indeed, let $x \in \ker \kappa$. Since the Galois action on $\Omega$ is trivial, $x$  is represented by a continuous homomorphism $\chi \colon \mathscr{G} \to \Omega$ of the absolute Galois group $\mathscr{G} = \mathrm{Gal}(K^{\mathrm{sep}}/K)$. Let $\mathscr{H} = \ker \chi$, and let $L$ be the finite Galois extension of $K$ corresponding to $\mathscr{H}$. The fact that $x \in \ker \kappa$ then implies that $\chi$ vanishes on every decomposition group $\mathscr{G}(v) = \mathrm{Gal}(K_v^{\mathrm{sep}}/K_v)$, $v \in V$. Then $\mathscr{G}(v) \subset \mathscr{H}$, implying that $L_w = K_v$ for all $v \in V$, $w \vert v$. Proposition \ref{P:2}  then yields $L = K$, i.e. $\mathscr{H} = \mathscr{G}$, proving that $\chi$ is the trivial homomorphism. Thus, $x$ is the trivial class, as required. \hfill $\Box$

\vskip1mm

\noindent {\bf Remark 2.5.}  The map $H^1(K , \Omega) \to \prod_{v \in V} H^1(K , \Omega)$ may not be injective even when the finite Galois module $\Omega$ is commutative and $V$ is the set of all places (including archimedean ones) of a number field $K$ --- see \cite[Ch. III, 4.7]{Serre-GC}.

\vskip1mm

 \noindent {\bf Remark 2.6.} The assertion of Proposition \ref{P:2} is false when $K$ is the function field of a smooth projective irreducible curve complex curve $X$ of genus $\geq 1$ and $V$ is the set of places associated with the closed points of $X$.  Then for any $n > 1$, one can find an element  $x \in \mathrm{Pic}^0(X)$ of order precisely $n$, and let $D$ be the degree zero divisor on $X$ representing $x$. Then $nD$ is the divisor $(f)$ of a function $f \in K^{\times}$. The fact that the order of $x$ in $\mathrm{Pic}^0(X)$ is $n$ implies that the order of $f$ in the quotient $K^{\times}/{K^{\times}}^n$ is also $n$, and therefore $L = K(\sqrt[n]{f})$ is a cyclic Galois extension of $K$ of degree $n$. On the other hand, let $p \in X$ be any closed point and $v = v_p$ be the corresponding discrete valuation of $K$. Since $v(f)$ is a multiple of $n$, the extension $L/K$ is unramified at $v_p$, i.e. $L_w/K_v$ is unramified. But since the residue field of $K_v$ is $\mathbb{C}$, hence algebraically closed, the field $K_v$ does not have any nontrivial unramified extensions. Thus, $L_w = K_v$.

\section{Proofs of Theorems \ref{T:1}, \ref{T:2}, and \ref{T:2A}}\label{S:Proofs}

We begin by reformulating
the assertion of Theorem \ref{T:1} in the language of adeles. Let $K$ be a field equipped with a set $V$
of discrete valuations, and for $v \in V$, denote by $\mathcal{O}_v$ the valuation ring in the corresponding completion $K_v$. To define the adelic group associated with
a linear algebraic $K$-group $D$, we first fix a faithful $K$-defined representation $D \hookrightarrow \mathrm{GL}_n$, and then let
$$
D(\mathbb{A}(K , V)) := \left\{ \left. \ (g_v) \in \prod_{v \in V} D(K_v) \ \right\vert \ g_v \in D(\mathcal{O}_v) \ \text{for almost all} \ v \in V \  \right\},
$$
where $D(\mathcal{O}_v) = D(K_v) \cap \mathrm{GL}_n(\mathcal{O}_v).$
In the situations that are most relevant for our discussion,
the set $V$ satisfies the following property:

\vskip2mm

\noindent (A) For any $a \in K^{\times}$, the set $V(a) := \{ v \in V \ \vert \ v(a) \neq 0 \}$ is finite.

\vskip2mm

\noindent For example, this is the case for any divisorial set of places $V$ of a finitely generated field $K$. This property has several important consequences. First, it implies that the adelic group does not depend on the initial choice of a faithful $K$-defined representation $D \hookrightarrow \mathrm{GL}_n$; more precisely, a $K$-defined isomorphism between two linear $K$-groups induces an isomorphism between the corresponding adelic groups. Second, property (A)
enables us to consider the diagonal embedding $D(K) \hookrightarrow D(\mathbb{A}(K , V))$. More generally, for any finite separable field extension $L/K$, the set $V^L$ consisting of all extensions of places from $V$ to $L$, also satisfies (A), so we again have  the diagonal embedding $D(L) \hookrightarrow D(\mathbb{A}(L , V^L))$. Moreover, if $L/K$ is a Galois extension with Galois group $\mathscr{G}$, then the standard action of $\mathscr{G}$ on $D(L)$ naturally extends to an action on $D(\mathbb{A}(L , V^L))$ (cf. \cite[\S 3]{RR-tori}). We then have the following.

\begin{prop}\label{P:adeles}
Let $K$ be a finitely generated field, $V$ a divisorial set of places of $K$, and $D$ a linear algebraic $K$-group whose connected component $D^{\circ} = T$
is a torus. Given a finite Galois extension $L/K$,

\vskip1.5mm

\noindent \ {\rm (i)} \parbox[t]{16cm}{if $D$ is commutative, then  the kernel of $\lambda^i_{D, V, L/K} \colon H^i(L/K , D) \to \prod_{v \in V} H^i(L_w/K_v , D)$ coincides with the kernel of $\theta^i_{L/K} \colon H^i(L/K , D(L)) \to H^i(L/K , D(\mathbb{A}(L , V^L)))$ for all $i \geq 1$;}

\vskip1mm

\noindent {\rm (ii)} \parbox[t]{16cm}{in the general case, the kernel of the map $\lambda^1_{D, V, L/K} \colon H^1(L/K , D) \to \prod_{v \in V} H^1(L_w/K_v , D)$ coincides with the kernel of the map $\theta^1_{L/K} \colon H^1(L/K , D) \to H^1(L/K , D(\mathbb{A}(L , V^L)))$.}
\end{prop}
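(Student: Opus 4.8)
The plan is to identify the cohomology of the adelic group $D(\A(L , V^L))$ with a restricted product of local cohomology groups, and then to observe that under this identification the map $\theta^i_{L/K}$ is simply the localization map appearing in $\lambda^i_{D , V , L/K}$. Throughout, write $\cG = \Ga(L/K)$, so that $H^i(L/K , D)$ means $H^i(\cG , D(L))$; recall (cf.\ \cite[\S 3]{RR-tori}) that $\cG$ acts on $D(\A(L , V^L))$ by permuting the factors $D(L_w)$ according to the natural action of $\cG$ on $V^L$, while acting on each factor through the corresponding local Galois action.

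First, I would treat a single $v \in V$. Since $L/K$ is Galois, the set of places $w \mid v$ forms a single $\cG$-orbit, isomorphic as a $\cG$-set to $\cG/\cG(w)$, where $\cG(w) = \Ga(L_w/K_v)$ is the decomposition group at a chosen $w \mid v$. Consequently the $\cG$-module (resp.\ $\cG$-group, in the non-commutative case of (ii)) $\prod_{w \mid v} D(L_w)$ is induced from $D(L_w)$, and Shapiro's lemma --- which holds as an isomorphism of pointed sets for $H^1$ of possibly non-commutative coefficients --- yields a natural isomorphism $H^i(\cG , \prod_{w \mid v} D(L_w)) \cong H^i(\cG(w) , D(L_w)) = H^i(L_w/K_v , D)$.

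Next, I would pass from the direct product over $w \mid v$ to the full adelic group, which is a restricted product. Writing $D(\A(L , V^L))$ as the filtered direct limit, over $\cG$-stable finite subsets $S \subseteq V^L$ (equivalently over finite subsets of $V$), of the $\cG$-subgroups $\prod_{w \in S} D(L_w) \times \prod_{w \notin S} D(\mathcal{O}_w)$, and using that $H^i(\cG , -)$ commutes with such direct limits (again as pointed sets when $i = 1$), the previous step produces a natural identification of $H^i(\cG , D(\A(L , V^L)))$ with the restricted product $\prod'_{v \in V} H^i(L_w/K_v , D)$, taken with respect to the images of the $H^i(\cG(w) , D(\mathcal{O}_w))$. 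Here property (A) is precisely what guarantees that each $D(\mathcal{O}_w)$ is well-defined, independent of the chosen faithful representation $D \hookrightarrow \mathrm{GL}_n$, and $\cG(w)$-stable, so that the direct system genuinely consists of $\cG$-subgroups.

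Finally, under this identification the map $\theta^i_{L/K}$, induced by the diagonal embedding $D(L) \hookrightarrow D(\A(L , V^L))$, becomes the diagonal localization map $H^i(L/K , D) \to \prod'_{v \in V} H^i(L_w/K_v , D)$, whose composition with the inclusion into the full product $\prod_{v \in V} H^i(L_w/K_v , D)$ is exactly $\lambda^i_{D , V , L/K}$. Since a class lies in the kernel precisely when all of its localizations are trivial --- a condition detected identically in the restricted and in the full product --- the kernels of $\theta^i_{L/K}$ and $\lambda^i_{D , V , L/K}$ coincide, proving both (i) and (ii). I expect the main technical point to be the passage to the restricted product: correctly organizing the restricted-product structure together with the $\cG$-action and verifying the compatibility of Shapiro's lemma with the direct limit, with extra care required in part (ii) to keep track of everything at the level of pointed sets.
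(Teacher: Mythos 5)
Your general framework (Shapiro's lemma plus writing the adelic group as a filtered colimit of the subgroups $\prod_{w\in S}D(L_w)\times\prod_{w\notin S}D(\mathcal{O}_w)$) is the right way to relate $\theta^i_{L/K}$ to $\lambda^i_{D,V,L/K}$, and it is essentially the reduction the paper performs. But there is a genuine gap at the final step. The colimit computing $H^i(L/K, D(\mathbb{A}(L,V^L)))$ is \emph{not} the restricted product of the local cohomology sets with respect to the images of the $H^i(L_w/K_v, D(\mathcal{O}_{L_w}))$: an element of the colimit represented at stage $S$ remembers, at the places outside $S$, its class in $H^i(L_w/K_v, D(\mathcal{O}_{L_w}))$ itself, not merely the image of that class in $H^i(L_w/K_v, D(L_w))$. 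Concretely, $\theta^i_{L/K}(x)$ is trivial if and only if all the local classes $x_v\in H^i(L_w/K_v,D(L_w))$ vanish \emph{and}, for almost all $v$, the integral class representing $x$ in $H^i(L_w/K_v,D(\mathcal{O}_{L_w}))$ already vanishes; whereas $\lambda^i_{D,V,L/K}(x)$ being trivial gives only the first condition. So your assertion that triviality is ``detected identically in the restricted and in the full product'' is precisely the point that needs proof, and it fails for a general filtered system of this shape.

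What closes the gap in the paper is Lemma \ref{L:integr}: for almost all $v\in V$ and $w\mid v$, the map $\mu^i\colon H^i(L_w/K_v, D(\mathcal{O}_{L_w}))\to H^i(L_w/K_v, D(L_w))$ is injective (for commutative $D$ and all $i\ge 1$) or has trivial kernel (for general $D$ and $i=1$). This is a nontrivial local statement: its proof uses that the splitting field of $T=D^{\circ}$ is unramified at almost all $v$, the resulting decomposition $T(L_w)\simeq \Gamma\times T(\mathcal{O}_{L_w})$ with $\Gamma$ coming from the cocharacter lattice, and the equality $D(L_w)=T(L_w)D(\mathcal{O}_{L_w})$ established in \cite{RR-tori}. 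Your argument would be complete once you insert this almost-everywhere injectivity statement before concluding that the two kernels agree.
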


The proposition is an immediate consequence of the following result.

\begin{lemma}\label{L:integr}
With notations as in Proposition \ref{P:adeles}, we have the following statements:

\vskip2mm

\noindent \ {\rm (i)} \parbox[t]{15cm}{If $D$ is commutative, then for almost all $v \in V$ and $w \vert v$, the group homomorphisms
$$
\mu^i \colon H^i(L_w/K_v , D(\mathcal{O}_{L_w})) \to H^i(L_w/K_v , D(L_w))
$$
are injective for all $i \geq 1$.}

\vskip2mm

\noindent  {\rm (ii)} \parbox[t]{15cm}{In the general case, for almost all $v \in V$ and $w \vert v$, the map
$$
\mu^1 \colon H^1(L_w/K_v , D(\mathcal{O}_{L_w})) \to H^1(L_w/K_v , D(L_w))
$$
has trivial kernel.}
\end{lemma}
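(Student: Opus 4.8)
The plan is to prove both statements at once by exploiting good reduction of $D$ at almost all $v$. First I would discard a finite set of places: since $D$ is defined over the function field $K$ of the model $\mathfrak{X}$, it spreads out to a smooth affine group scheme $\mathcal{D}$ over the valuation ring $\mathcal{O}_v$ for all but finitely many $v$, with connected component $\mathcal{D}^{\circ} = \mathcal{T}$ an $\mathcal{O}_v$-torus and component group $\mathcal{F} = \mathcal{D}/\mathcal{T}$ a finite étale $\mathcal{O}_v$-group scheme. Shrinking the model further, I may assume that $\mathcal{T}$ splits over a finite étale (hence unramified) cover and that $L_w/K_v$ is unramified for every $w \mid v$. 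For such $v$ the inclusion $\mathcal{D}(\mathcal{O}_{L_w}) \hookrightarrow D(L_w)$ is the map inducing $\mu^{\bullet}$, and, writing $G = \mathrm{Gal}(L_w/K_v)$ and $F = D/T$, one has $\mathcal{F}(\mathcal{O}_{L_w}) = F(L_w)$ $G$-equivariantly, since finite étale schemes have the same points over $\mathcal{O}_{L_w}$ and over its fraction field $L_w$.

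The heart of the argument is the torus case. I would fix a uniformizer $\pi$ of $K_v$; since the whole tower $EL_w/K_v$ (with $E$ the splitting field of $T$) is unramified, $\pi$ stays a uniformizer throughout, so the valuation splits the unit sequence $\mathrm{Gal}(EL_w/K_v)$-equivariantly: $(EL_w)^{\times} = \mathcal{O}_{EL_w}^{\times} \oplus \pi^{\mathbb{Z}}$. Tensoring with the cocharacter lattice $X_*(T)$ and taking $\mathrm{Gal}(EL_w/L_w)$-invariants yields a $G$-equivariant split exact sequence
$$0 \to \mathcal{T}(\mathcal{O}_{L_w}) \to T(L_w) \to M \to 0$$
with $M$ a $G$-lattice; hence $\mu^i_T \colon H^i(G, \mathcal{T}(\mathcal{O}_{L_w})) \to H^i(G, T(L_w))$ is a split injection for every $i \geq 1$. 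The same splitting, applied to torsors, shows that $H^1_{\mathrm{\acute{e}t}}(\mathcal{O}_{L_w}, \mathcal{T}) \to H^1(L_w, T)$ is injective; consequently a class in $\mathcal{F}(\mathcal{O}_{L_w}) = F(L_w)$ lifts to $\mathcal{D}(\mathcal{O}_{L_w})$ as soon as it lifts to $D(L_w)$, because the corresponding $\mathcal{T}$-torsor over $\mathcal{O}_{L_w}$ is then generically trivial, hence trivial. Thus the images $\bar F := \mathrm{im}(\mathcal{D}(\mathcal{O}_{L_w}) \to F(L_w))$ and $\mathrm{im}(D(L_w) \to F(L_w))$ coincide, giving two short exact sequences of $G$-groups with the same cokernel $\bar F$,
$$1 \to \mathcal{T}(\mathcal{O}_{L_w}) \to \mathcal{D}(\mathcal{O}_{L_w}) \to \bar F \to 1, \qquad 1 \to T(L_w) \to D(L_w) \to \bar F \to 1,$$
the first mapping into the second by the identity on $\bar F$.

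For the commutative case (i) I would feed these two sequences into the long exact cohomology sequences and run a four-lemma: the identity on the $H^{\bullet}(G, \bar F)$ terms, together with the injectivity of $\mu^i_T$, forces $\mu^i_D$ to be injective for all $i$. For the general case (ii) I would argue by a twisting/connecting-map chase in nonabelian cohomology. Given $\xi \in \ker \mu^1_D$, its image in $H^1(G, \bar F)$ is trivial (compatibility with the identity on $\bar F$), so $\xi = p_*(\eta)$ for some $\eta \in H^1(G, \mathcal{T}(\mathcal{O}_{L_w}))$; then $q_*(\mu^1_T(\eta)) = \mu^1_D(\xi)$ is trivial, so $\mu^1_T(\eta)$ lies in $\ker q_*$, which by Serre's exact sequence of pointed sets equals the image of the connecting map $\delta_L \colon \bar F^{G} \to H^1(G, T(L_w))$. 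Writing $\mu^1_T(\eta) = \delta_L(c)$ and using the compatibility $\mu^1_T \circ \delta_{\mathcal{O}} = \delta_L$ of the connecting maps together with the injectivity of $\mu^1_T$, I conclude $\eta = \delta_{\mathcal{O}}(c) \in \ker p_*$, whence $\xi = p_*(\eta)$ is trivial.

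The step I expect to be the main obstacle is exactly the matching of the component-group quotients, i.e. showing that $\bar F$ is the same for the integral and the rational sequences: a priori a component of $D$ could acquire an $L_w$-point without acquiring an $\mathcal{O}_{L_w}$-point, which would break both the four-lemma and the chase. This is precisely what the injectivity of $H^1_{\mathrm{\acute{e}t}}(\mathcal{O}_{L_w}, \mathcal{T}) \to H^1(L_w, T)$ rules out, and that injectivity rests on the uniformizer splitting available because all the relevant extensions are unramified. The remaining care is the bookkeeping in the nonabelian setting of (ii), where $T$ is normal but not central in $D$, so that one must work with the exact sequence of pointed sets and the description of the fibers of $q_*$ via the connecting map.
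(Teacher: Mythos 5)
Your proposal is correct, and its core inputs are the same as the paper's: unramifiedness of the relevant extensions at almost all $v$, the $\mathrm{Gal}$-equivariant splitting $T(L_w)\simeq \Gamma\times T(\mathcal{O}_{L_w})$ coming from the uniformizer decomposition of $P_u^{\times}$ tensored with $X_*(T)$, and the fact that every connected component of $D$ met by $D(L_w)$ is already met by $D(\mathcal{O}_{L_w})$. Where you diverge is in how the last two facts are combined. The paper takes the component statement in the concrete form $D(L_w)=T(L_w)D(\mathcal{O}_{L_w})$ (quoted from \cite{RR-tori}), feeds in the torus splitting to get $D(L_w)=\Gamma\, D(\mathcal{O}_{L_w})$ with $\Gamma\cap D(\mathcal{O}_{L_w})=\{1\}$, and then part (i) is immediate from the resulting direct product decomposition while part (ii) is a two-line cocycle manipulation ($x=yz$, $z\xi(\sigma)\sigma(z)^{-1}=y^{-1}\sigma(y)\in D(\mathcal{O}_{L_w})\cap\Gamma=\{1\}$). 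You instead re-derive the component statement cohomologically (triviality of the generically trivial $\mathcal{T}$-torsor over $\mathcal{O}_{L_w}$, via injectivity of $\he^1(\mathcal{O}_{L_w},\mathcal{T})\to H^1(L_w,T)$ as in \cite[Proposition 2.2]{CTS78}), and then compare the integral and rational extensions $1\to T\to D\to \bar F\to 1$ by a four-lemma in the commutative case and a connecting-map chase in the nonabelian case. Both routes are sound; yours is somewhat longer and requires more care with the nonabelian exact sequence of pointed sets and with spreading $D$ out to a smooth $\mathcal{O}_v$-model with torus identity component and finite \'etale component group, but it has the merit of making the component-matching step self-contained rather than a citation, and of isolating exactly where the unramified hypothesis enters (through the injectivity of $\mu^i_T$ and of $\he^1(\mathcal{O}_{L_w},\mathcal{T})\to H^1(L_w,T)$). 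The obstacle you flag --- that a component could acquire an $L_w$-point without an $\mathcal{O}_{L_w}$-point --- is indeed the crux, and your torsor argument disposes of it correctly.
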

\begin{proof}
Let $E$ be the splitting field of the torus $T = D^{\circ}$, and set $P = EL$. Then for almost all $v \in V$ and $u \vert v$, the following two properties hold:

\vskip1.5mm

\noindent (a) the extension $P_u/K_v$ is unramified;

\vskip1mm

\noindent (b) \parbox[t]{12cm}{all co-characters $\chi \in X_*(T)$ are defined over $\mathcal{O}_{P_u}$, and consequently
$T(\mathcal{O}_{P_w})$ is a maximal bounded subgroup of $T(P_u)$.}

\vskip1.5mm

\noindent Furthermore, it was shown in \cite[p. 9-10]{RR-tori} that for almost all $v \in V$ and $w \vert v$, we have

\vskip1.5mm

\noindent (c) $D(L_w) = T(L_w) D(\mathcal{O}_{L_w})$.

\vskip1.5mm

\noindent We will now show that the validity of the three properties (a)-(c) implies the assertions of the lemma.
Let $\pi \in K_v$ be a uniformizer. Since $P_u/K_v$ is unramified, $\pi$ remains a uniformizer in $P_u$, so we have the following decomposition of the multiplicative group $P_u^{\times}$ as $\mathrm{Gal}(P_u/K_v)$-module:
$$
P_u^{\times} = \langle \pi \rangle \times U_{P_u} \simeq \mathbb{Z} \times U_{P_u},
$$
where $U_{P_u} = \mathcal{O}_{P_u}^{\times}$ is the group of units of $P_u$. As above, let $X_*(T)$ be the group of cocharacters of $T$. We then  have the following decomposition as $\mathrm{Gal}(P_u/K_v)$-modules:
$$
T(P_u) \simeq X_*(T) \otimes_{\mathbb{Z}} P_u^{\times} \simeq X_*(T) \otimes_{\mathbb{Z}}  (\Z \times U_{P_u}).
$$
Clearly, $X_*(T) \otimes_{\mathbb{Z}} U_{P_u}$ is a maximal bounded subgroup of $T(P_u)$, hence coincides with $T(\mathcal{O}_{P_u})$. Thus,
$$
T(P_u) \simeq X_*(T) \times T(\mathcal{O}_{P_u}).
$$
Taking $\mathrm{Gal}(P_u/L_w)$-fixed points, we obtain the decomposition
$$
T(L_w) \simeq \Gamma \times T(\mathcal{O}_{L_w}) \ \ \text{where} \ \ \Gamma = X_*(T)^{\mathrm{Gal}(P_u/L_w)}.
$$
Combining this with (c), we thus have
\begin{equation}\label{E:X1}
D(L_w) = \Gamma  D(\mathcal{O}_{L_w}),
\end{equation}
noting that $\Gamma \cap D(\mathcal{O}_{L_w}) = \{ 1 \}$ since $\Gamma$ does not contain any bounded subgroups.
If $D$ is commutative, then
(\ref{E:X1}) is actually a direct product decomposition, which immediately implies part (i) of the lemma. To prove part (ii) in the general case, let us assume
that a cocycle $\xi \in Z^1(L_w/K_v , D(\mathcal{O}_{L_w}))$ represents an element of $\ker \mu^1$. Then there exists $x \in D(L_w)$ such that
$$
\xi(\sigma) = x^{-1} \sigma(x) \ \ \text{for all} \ \ \sigma \in \mathrm{Gal}(L_w/K_v).
$$
Using (\ref{E:X1}), we can write $x = yz$, with $y \in \Gamma$ and $z \in D(\mathcal{O}_{L_w})$. Then
$$
z \xi(\sigma) \sigma(z)^{-1} = y^{-1} \sigma(y) \in D(\mathcal{O}_{L_w}) \cap \Gamma = \{ 1 \}.
$$
Consequently, $\xi(\sigma) = z^{-1} \sigma(z)$, showing that $\xi$ represents the trivial cohomology class in $H^1(L_w/K_v , D(\mathcal{O}_{L_w}))$.
\end{proof}

Next, we consider the subgroup of {\it integral adeles}
$$
D(\mathbb{A}^{\infty}(K , V)) := \prod_{v \in V} D(\mathcal{O}_v)
$$
and the corresponding class set
$$
\mathrm{cl}(D, K, V) := D(\mathbb{A}^{\infty}(K, V)) \backslash D(\mathbb{A}(K , V)) / D(K).
$$
In \cite{CRR-Isr}, \cite{RR-tori}, we introduced

\vskip1.5mm

\noindent {\bf Condition (T)} {\it There exists a finite subset $S \subset V$ such that $$\vert \mathrm{cl}(D, K, V \setminus S) \vert = 1.$$}

\vskip1.5mm

The following result is \cite[Theorem 3.4]{RR-tori}.

\begin{thm}\label{T:ConT}
Let $K$ be a finitely generated field and $V$ be a divisorial set of places of $K$. Then any linear algebraic $K$-group $D$ whose connected component is a torus satisfies Condition $(\mathrm{T})$.
\end{thm}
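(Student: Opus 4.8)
\noindent\emph{Sketch of a proof of Theorem \ref{T:ConT}.} The plan is to reduce, in stages, to the simplest possible group. First I would fix a model $\mathfrak{X} = \mathrm{Spec}\, A$ for $V$ and spread $D$ out to a smooth affine group scheme $\mathcal{D}$ over a dense open $U \subseteq \mathfrak{X}$ whose connected component $\mathcal{D}^{\circ}$ is a torus scheme $\mathcal{T}$ and whose component group $\mathcal{F} = \mathcal{D}/\mathcal{T}$ is finite \'etale. For all but finitely many $v \in V$ the center of $v$ lies on $U$ and $D(\mathcal{O}_v) = \mathcal{D}(\mathcal{O}_v)$, and deleting a prime divisor from $U$ corresponds exactly to deleting the associated place from $V$. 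Under this dictionary, Condition (T) asserts that after passing to the complement of finitely many prime divisors one has
$$
D(\mathbb{A}(K, V \setminus S)) = D(\mathbb{A}^{\infty}(K, V \setminus S)) \cdot D(K),
$$
i.e. the class set becomes a single point. Using the exact sequence $1 \to T \to D \to F \to 1$ with $F = D/T$ finite, together with finiteness inputs of the type furnished by Proposition \ref{P:1} (which controls the finite-\'etale contribution via the finiteness of extensions unramified outside a fixed finite set), I would carry out a d\'evissage along this sequence and thereby reduce the whole statement to Condition (T) for the torus $T = D^{\circ}$ itself.

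For a torus $T$, the next step is to replace $T$ by a quasi-trivial torus. I would choose a resolution $1 \to T \to R \to Q \to 1$ (or dually $1 \to S \to R \to T \to 1$), obtained from a resolution of the cocharacter lattice $X_*(T)$ by a permutation lattice, in which $R = \prod_i \mathrm{Res}_{L_i/K}\mathbb{G}_m$ is induced. Passing to adeles and to class sets and chasing the resulting exact sequence, the class set of $T$ is then squeezed between the class set of the quasi-trivial torus $R$, which can be computed explicitly, and cohomological error terms attached to $S$ or $Q$, which I would arrange to die after deleting finitely many further places.

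The class set of a quasi-trivial torus decomposes as a product of pieces of the shape $\mathrm{cl}(\mathrm{Res}_{L/K}\mathbb{G}_m, K, V) \cong \mathrm{Cl}(A_L)$, the divisor class group of the integral closure $A_L$ of $A$ in $L$, a normal domain finitely generated over $\mathbb{Z}$. The crucial arithmetic input is that such a class group is \emph{finitely generated}: this rests on the finite generation of the N\'eron--Severi group together with the Mordell--Weil--Lang--N\'eron theorem bounding the $\mathrm{Pic}^0$-part over the finitely generated base field, and it is precisely here that the hypothesis that $K$ is finitely generated over its prime field is used. Granting this, I would pick finitely many prime divisors whose classes generate $\mathrm{Cl}(A_L)$, delete the corresponding places from $V$, and so render each $\mathrm{Cl}(A_L)$ — hence the class set of $R$ — trivial.

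The main obstacle, I expect, is not the computation of the quasi-trivial class set but the bookkeeping in the two d\'evissages. Along both the component-group sequence $1 \to T \to D \to F \to 1$ and the resolution of the torus one produces $H^1$- and $H^2$-type error terms over the open scheme $U$, and one must verify that these can all be trivialized (or are already trivial) after removing finitely many prime divisors; moreover, in the general, non-commutative case (ii) the relevant sequences are exact sequences of pointed sets rather than of abelian groups, so the usual cokernel/kernel arguments must be replaced by careful twisting and orbit-counting. Controlling these terms uniformly, and checking that the finite set $S$ one accumulates at each stage remains finite, is where I anticipate the real work to lie.
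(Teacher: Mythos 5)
First, a point of order: the paper does not prove Theorem \ref{T:ConT} in this text at all --- it is imported as Theorem 3.4 of the companion paper \cite{RR-tori}, so there is no in-house proof to match your sketch against. Judged on its own merits, your sketch has the right overall shape and, most importantly, identifies the correct arithmetic engine: Condition (T) for $\mathbb{G}_m$ and for $\mathrm{Res}_{L/K}\mathbb{G}_m$ comes down to the finite generation of the divisor class group of a normal model of finite type over $\mathbb{Z}$ (N\'eron--Severi plus Mordell--Weil--Lang--N\'eron, plus finiteness of class numbers in the arithmetic fibration), after which one deletes the places supporting a finite generating set. This is exactly where the hypothesis that $K$ is finitely generated is used, and it matches the mechanism of \cite{RR-tori}.

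The gaps are in your two d\'evissages, both of which you flag but neither of which is routine. (a) For the reduction from $D$ to $T=D^{\circ}$: since $\mathrm{cl}(D,K,V)$ is only a pointed set when $D$ is non-commutative, a kernel/cokernel chase along $1\to T\to D\to F\to 1$ does not by itself reduce Condition (T) for $D$ to Condition (T) for $T$. What is actually needed --- and what \cite{RR-tori} proves, quoted in the present paper as property (c) in the proof of Lemma \ref{L:integr} --- is the local statement $D(K_v)=T(K_v)\,D(\mathcal{O}_v)$ for almost all $v$ and $w\vert v$; granting it, $D(\mathbb{A})=T(\mathbb{A})\,D(\mathbb{A}^{\infty})$ up to finitely many places and the reduction to the torus is immediate. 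Your sketch should isolate and prove this statement (it amounts to the surjectivity of $D(\mathcal{O}_v)\to F(\mathcal{O}_v)=F(K_v)$ at almost all $v$). (b) For the torus step, the quasi-trivial resolution $1\to S\to R\to T\to 1$ produces a cokernel of $\mathrm{cl}(R,K,V)\to\mathrm{cl}(T,K,V)$ governed by classes in $H^1(K_v,S)$ at infinitely many places $v$; the residue fields of divisorial valuations are themselves finitely generated (not finite) fields, these $H^1$'s need not vanish or even be finite, and it is not clear that the resulting error term is finitely generated, let alone that it dies after deleting finitely many places. The route visible in Lemma \ref{L:integr} avoids resolutions entirely: at almost all $v$ one has the Galois-equivariant splitting $T(K_v)\simeq X_*(T)^{\Gamma_v}\times T(\mathcal{O}_v)$, so $T(\mathbb{A})/T(\mathbb{A}^{\infty})T(K)$ is identified with a quotient of a group of $X_*(T)$-valued divisors on the model over the splitting field modulo principal ones, and finite generation follows directly from finite generation of the divisor class group and of the unit group of that model. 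I would replace your resolution step by this direct local decomposition.
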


\vskip2mm

\noindent {\it{Proof of Theorem \ref{T:1}.}} (i): By Proposition \ref{P:adeles}, we have $$\ker \lambda^i_{D, V, L/K} = \ker \theta^i_{L/K},$$ so it is enough to prove the finiteness of the latter. Applying Theorem \ref{T:ConT} to $D$ over $L$ and the divisorial set of places $V^L$ of $L$, we see that after deleting from $V$ a finite set of places (which can only make $\ker \theta^i_{L/K}$ larger), we can assume that $\vert \mathrm{cl}(D, L, V^L) \vert = 1$, i.e. $D(\mathbb{A}(L , V^L)) = D(\mathbb{A}^{\infty}(L , V^L)) D(L)$. We then have the short exact sequence
$$
1 \to E \longrightarrow D(\mathbb{A}^{\infty}(L , V^L)) \times D(L) \stackrel{\mu}{\longrightarrow} D(\mathbb{A}(L , V^L)) \to 1,
$$
where $\mu$ is the product map and $E := D(L) \cap D(\mathbb{A}^{\infty}(L , V^L))$, and we consider the
following fragment of the corresponding cohomological long exact sequence
{\small
$$
H^i(L/K , E) \longrightarrow H^i(L/K , D(\mathbb{A}^{\infty}(L , V^L))) \times H^i(L/K , D(L)) \stackrel{\mu^i}{\longrightarrow} H^i(L/K , D(\mathbb{A}(L , V^L))).
$$}
As we discussed in the proof of \cite[Proposition 3.2]{RR-tori}, the intersection $$T(L) \cap T(\mathbb{A}^{\infty}(L , V^L))$$ is a finitely generated group.  Since $E$ contains this intersection as a subgroup of finite index, it is itself finitely generated. Then the group $H^i(L/K , E)$ is finite (cf. \cite[Ch. IV, \S 6, Corollary 2]{ANT}), implying the finiteness of $\ker \mu^i$. On the other hand, we clearly have  $\{ 0 \} \times \ker \theta^i_{L/K} \subset \ker \mu^i$, and our claim follows.

\vskip1mm

(ii): We now give a noncommutative version of the above argument for $i = 1$,  which requires the following.

\begin{lemma}\label{L:Finite}
Let $G$ be a finite group and $\Lambda$ be a $G$-group. Assume that $\Lambda$ admits a finite index subgroup $\Theta \subset \Lambda$ that is a finitely generated abelian group. Then the set $H^1(G , \Lambda)$ is finite.
\end{lemma}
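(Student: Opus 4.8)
The plan is to deduce the finiteness of $H^1(G , \Lambda)$ from the exact cohomology sequence attached to the subgroup $\Theta$, but first I need to replace $\Theta$ by a subgroup that is simultaneously normal in $\Lambda$ and stable under the $G$-action (neither of which is assumed). To this end I would set
$$
\Theta_0 := \bigcap_{a \in \Lambda,\, \sigma \in G} a\, \sigma(\Theta)\, a^{-1}.
$$
For each fixed $\sigma$, the inner intersection over $a \in \Lambda$ is the normal core of $\sigma(\Theta)$ in $\Lambda$, hence of finite index (dividing $[\Lambda : \Theta]!$); intersecting over the finitely many $\sigma \in G$ keeps the index finite. A direct computation shows that $\Theta_0$ is normal in $\Lambda$ and $G$-stable, since conjugation by $\Lambda$ and the action of $G$ merely permute the subgroups $a\, \sigma(\Theta)\, a^{-1}$ among themselves. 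Finally, $\Theta_0 \subset \Theta$ is a subgroup of a finitely generated abelian group, so it is again finitely generated abelian. Thus, with no loss of generality, I may assume $\Theta$ is a finite-index, normal, $G$-stable subgroup of $\Lambda$ that is finitely generated abelian.

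Granting this, put $\Pi = \Lambda / \Theta$: a \emph{finite} group carrying a $G$-action, and consider the short exact sequence of $G$-groups
$$
1 \longrightarrow \Theta \longrightarrow \Lambda \stackrel{p}{\longrightarrow} \Pi \longrightarrow 1.
$$
This gives the exact sequence of pointed sets in nonabelian cohomology (cf. \cite[Ch. I, 5.4]{Serre-GC})
$$
H^1(G , \Theta) \longrightarrow H^1(G , \Lambda) \stackrel{p_*}{\longrightarrow} H^1(G , \Pi).
$$
Since $\Pi$ is finite, there are only finitely many maps $G \to \Pi$, so $Z^1(G , \Pi)$ and a fortiori $H^1(G , \Pi)$ are finite. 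It therefore remains to bound the fibers of $p_*$.

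For the fibers I would use the twisting formalism (cf. \cite[Ch. I, 5.5]{Serre-GC}). Fixing a cocycle $b \in Z^1(G , \Lambda)$ and setting $\beta = p \circ b$, twisting by $b$ produces a new $G$-group ${}_b\Theta$ whose underlying abelian group is still $\Theta$ but whose $G$-action is modified using conjugation by the values of $b$; in particular ${}_b\Theta$ remains finitely generated abelian. The twisting bijection identifies the fiber of $p_*$ over $[\beta]$ with the image of $H^1(G , {}_b\Theta)$ in $H^1(G , {}_b\Lambda)$, so this fiber is in bijection with a quotient of the set $H^1(G , {}_b\Theta)$. The latter is finite because $G$ is finite and ${}_b\Theta$ is a finitely generated abelian group (cf. \cite[Ch. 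IV, \S 6, Corollary 2]{ANT}). Hence every fiber of $p_*$ is finite; as the base $H^1(G , \Pi)$ is finite, the set $H^1(G , \Lambda)$ is finite, as claimed.

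I expect the main obstacle to be twofold: first, the reduction in the opening paragraph, where one must secure normality in $\Lambda$, $G$-stability, finite index, and the finitely generated abelian property all at once; and second, the fiber analysis, since $\Lambda$ is not commutative and the fibers of $p_*$ cannot be read off directly from $H^1(G , \Theta)$ but only through the twisted groups ${}_b\Theta$. Both are standard in principle, so the work lies in assembling them correctly rather than in any single hard estimate.
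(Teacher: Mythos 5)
Your proof is correct and follows essentially the same route as the paper's: reduce to a finite-index subgroup that is normal, $G$-stable, and finitely generated abelian (the paper does the two intersections sequentially rather than in one step), then use the finiteness of $H^1(G,\Lambda/\Theta)$ together with the twisting formalism to cover each fiber by $H^1(G,{}_b\Theta)$, which is finite since $G$ is finite and ${}_b\Theta$ is finitely generated abelian. No substantive differences.
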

\begin{proof}
The intersection $\Theta' = \bigcap_{\lambda \in \Lambda} (\lambda \Theta \lambda^{-1})$ is a {\it normal} subgroup of $\Lambda$ of finite index. Being a subgroup of $\Theta$, the subgroup $\Theta'$ is itself a finitely generated group. Thus, replacing $\Theta$ by $\Theta'$, we may assume that $\Theta$ is normal in $\Lambda$. Similarly, replacing $\Theta$ by $\bigcap_{g \in G} g(\Theta)$, we may also assume that $\Theta$ is $G$-invariant. Let $$\varphi \colon
H^1(G , \Lambda) \longrightarrow H^1(G , \Lambda/\Theta)$$ be the canonical map. Since the set $H^1(G , \Lambda/\Theta)$ is finite, it is enough to show that for each $x \in H^1(G , \Lambda/\Theta)$, the fiber $\varphi^{-1}(x)$ is finite. There is nothing to prove if $\varphi^{-1}(x) = \emptyset$. Otherwise, we pick a cocycle $\zeta \in Z^1(G , \Lambda)$ so that the image of its cohomology class under $\varphi$ is $x$, and let ${}_{\zeta}\Theta$ denote the corresponding twisted group. Then there is a surjection of $H^1(G , {}_{\zeta}\Theta)$ onto $\varphi^{-1}(x)$ (see \cite[Ch. I, \S 5.5, Corollary 2]{Serre-GC}). But since ${}_{\zeta}\Theta$ is a finitely generated abelian group, $H^1(G , {}_{\zeta}\Theta)$ is finite (cf. \cite[Ch. IV, \S 6, Corollary 2]{ANT}), so our claim follows.
\end{proof}

Using twisting, we see that it is enough to prove the finiteness of $\ker \theta^1_{L/K}$ for any $D$ as in theorem. Furthermore, due to Theorem \ref{T:ConT},
we may assume that
\begin{equation}\label{E:1Y}
D(\mathbb{A}(L , V^L)) = D(\mathbb{A}^{\infty}(L , V^L)) D(L).
\end{equation}
Suppose that $x \in \ker \theta^1_{L/K}$ is represented by a cocycle $\zeta \in Z^1(L/K , T(L))$. Then there exists $g \in D(\mathbb{A}(L , V^L))$ such that
$$
\zeta(\sigma) = g^{-1} \sigma(g) \ \ \text{for all} \ \ \sigma \in \mathrm{Gal}(L/K).
$$
According to (\ref{E:1Y}), we can write $g = ab$ where $a \in D(\mathbb{A}^{\infty}(L , V^L))$ and $b \in D(L)$. Then
$$
a \zeta(\sigma) \sigma(a)^{-1} = b^{-1} \sigma(b) \in D(L) \cap D(\mathbb{A}^{\infty}(L , V^L)) =: E.
$$
Thus $x$ lies in the image of the map $H^1(L/K , E) \to H^1(L/K , D(L))$, and hence all of $\ker \theta^1_{L/K}$ lies in this image. As we pointed out in the proof of (i), $E$ contains a finitely generated abelian group as a subgroup of finite index. So, by Lemma \ref{L:Finite}, the set $H^1(L/K , E)$ is finite, and the finiteness of $\ker \theta^1_{L/K}$ follows. \hfill $\Box$

\vskip2mm

\noindent {\it Proof of Theorem \ref{T:2}.} Again, the use of twisting shows that it is enough to establish the finiteness of the kernel of the map
$$
\theta \colon H^1(K , D) \longrightarrow \prod_{v \in V} H^1(K_v , D)
$$
for any $D$ as in the statement of the theorem. We will derive this from Theorem \ref{T:1}(ii). Let $T = D^{\circ}$. Then the quotient $\Omega := D/T$ is finite, so the kernel of $$\kappa \colon H^1(K , \Omega) \longrightarrow \prod_{v \in V} H^1(K_v , \Omega)$$ is finite by Proposition \ref{P:1}. So, we can find a finite Galois extension $L/K$ that splits $T$ and for which the image of $\ker \kappa$ under the restriction map $H^1(K , \Omega) \to H^1(L , \Omega)$ is trivial (see the proof of Proposition \ref{P:1}). Let us show that then for the restriction map $\rho \colon H^1(K , D) \to H^1(L , D)$, we also have
\begin{equation}\label{E:1Z}
\rho(\ker \theta) = \{ 1 \}.
\end{equation}
The exact sequence
$$
1 \to T \longrightarrow D \longrightarrow \Omega \to 1,
$$
gives rise to the following commutative diagram
$$
\xymatrix{H^1(K,D) \ar[rr]^{\delta_K} \ar[d]_{\theta} & & H^1(K, \Omega) \ar[d]^{\kappa} \\ \displaystyle{\prod_{v \in V}} H^1(K_v, D) \ar[rr] & & \displaystyle{\prod_{v \in V}} H^1(K_v, \Omega)}
$$
Let $x \in \ker \theta$. We then conclude from the diagram that $\delta_K(x) \in \ker \kappa$, and hence by our construction,  the image of $x$ under the composite map
$$
H^1(K , D) \stackrel{\delta_K}{\longrightarrow} H^1(K , \Omega) \longrightarrow H^1(L , \Omega)
$$
is trivial. Consequently, in view of the commutative diagram
$$
\xymatrix{H^1(K, D) \ar[r]^{\delta_K} \ar[d]_{\rho} & H^1(K, \Omega) \ar[d] \\ H^1(L, D) \ar[r]^{\delta_L} & H^1(L, \Omega)}
$$
the image of $\rho(x)$ under the map $\delta_L \colon H^1(L , D) \to H^1(L , \Omega)$ is also trivial. Since $H^1(L , T)$ is trivial by Hilbert's Theorem 90, we conclude from the exact sequence
$$
H^1(L , T) \longrightarrow H^1(L , D) \longrightarrow H^1(L , \Omega).
$$
that $\rho(x)$ is trivial, proving (\ref{E:1Z}).

To conclude the argument, we consider the following commutative diagram, in which the rows are the inflation-restriction exact sequences:
$$
\xymatrix{1 \ar[r] & H^1(L/K, D) \ar[rr]^v \ar[d]_{\theta_L} & & H^1(K,D)  \ar[d]^{\theta} \ar[r]^{\rho} & H^1(L,D) \\ 1 \ar[r] & \displaystyle{\prod_{v \in V}} H^1(L_w/K_v, D) \ar[rr]^{\Upsilon} & & \displaystyle{\prod_{v \in V}} H^1(K_v, D) & &}
$$
In conjunction with (\ref{E:1Z}), the diagram yields  $\ker \theta = \upsilon(\ker \theta_L)$, and since $\ker \theta_L$ is finite by Theorem \ref{T:1}(ii), we conclude that $\ker \theta$ is finite, as required. \hfill $\Box$

\vskip1mm

\noindent {\it Proof of Theorem \ref{T:2A}.} Let $G$ be a reductive $K$-group, $T$ be a maximal $K$-torus of $G$, and $N = N_G(T)$ be its normalizer. The variety $\mathscr{T}$ of maximal tori of $G$ is a $K$-defined variety whose points $\mathscr{T}(F)$ over a field extension $F/K$ bijectively correspond to the maximal $F$-defined tori of $G$; note that $\mathscr{T}$ can be identified with the quotient $G/N$ (cf. \cite[2.4.8]{Pl-R}). Furthermore,
there is a map $\gamma_F \colon \mathscr{T}(F) \to H^1(F , N)$ that is defined as follows: given an $F$-defined maximal torus $T' \in \mathscr{T}(F)$, we pick an arbitrary $g \in G(F^{\mathrm{sep}})$ such that $T' = g T g^{-1}$, and then $\xi(\sigma) = g^{-1} \sigma(g)$ for $\sigma \in \mathrm{Gal}(F^{\mathrm{sep}}/F)$ defines a Galois 1-cocycle with values in $N(F^{\mathrm{sep}})$ and $\gamma_F$ sends $T'$ to the cohomology class of this cocycle. A standard argument shows that the fibers of $\gamma_F$ correspond to the $G(F)$-conjugacy classes of $F$-defined maximal tori of $G$, and the fiber over the trivial class is precisely the $G(F)$-conjugacy class of $T$.

Now, to prove the theorem we need to show that the image $\gamma_K(\mathscr{C}(T))$ is finite. For this we observe that this image is contained in
$$
\ker\left(H^1(K , N) \to \prod_{v \in V} H^1(K_v , N) \right).
$$
Since the connected component $N^{\circ}$ is $T$ (see Corollary 2 of 8.10 and Corollary 2 of 13.17 in \cite{BorelAG}), this kernel is finite by Theorem \ref{T:2}, and the finiteness of $\gamma_K(\mathscr{C}(T))$ follows. \hfill $\Box$

\section{Tori with good reduction: general set-up}\label{S:Funct}

We now turn to finiteness results for tori with good reduction over function fields of algebraic varieties defined over special fields.
In this section, we will describe a general formalism that extends the argument used in the proof of \cite[Theorem 1.1]{RR-tori}; for the reader's convenience we include all the details. After these preparations, we will formulate and prove our finiteness results in the most general form possible in the next section, obtaining, in particular, a proof of  Theorem \ref{T:3}. First, we will describe the required condition on the base field. In \cite[Ch. III, \S 4]{Serre-GC}, Serre introduced the following condition on a profinite group $\mathscr{G}$:

\vskip2mm

\noindent (F) \parbox[t]{15cm}{For any finite group $\Phi$, the set $\mathrm{Hom}_{\mathrm{cont}}(\mathscr{G} , \Phi)$ of
continuous group homomorphisms $\mathscr{G} \to \Phi$ is finite.}

\vskip2mm

\noindent (We recall that this condition is equivalent to the requirement that for each $n \geq 1$, the group $\mathscr{G}$ has finitely many open subgroups of index $n$.) Nowadays, groups satisfying condition (F) are often called {\it small}; note that every finitely generated profinite group is small. Furthermore, a field $k$ is said to be of {\it type} (F) if its absolute Galois group $\mathrm{Gal}(k^{\mathrm{sep}}/k)$ is small (we note that in his definition, Serre also requires $k$ to be perfect, but this is not needed in our context). Well-known examples of fields of type (F) are algebraically closed fields, finite fields, and finite extensions of the $p$-adic field $\mathbb{Q}_p$, but in fact, there are many others --- see the discussion in \cite[\S 2]{IR}.

To formulate our results in full generality, it is convenient to introduce a variant of condition (F). Let $p$ be either 1 or a prime number. We then consider the following condition on a profinite group $\mathscr{G}$:

\vskip2mm

\noindent $(\mathrm{F}(p))$ \parbox[t]{15cm}{For any finite group $\Phi$ \underline{of order prime to} $p$, the set $\mathrm{Hom}_{\mathrm{cont}}(\mathscr{G} , \Phi)$ is finite.}

\vskip2mm

\noindent We note that condition $(\mathrm{F}(1))$ coincides with (F), and that (F) implies $(\mathrm{F}(p))$ for all $p$. Given a profinite group $\mathscr{G}$, we let $\mathscr{N}$ denote the intersection of the kernels of all continuous homomorphisms $\varphi \colon \mathscr{G} \to \Phi$, where $\Phi$ is a finite group of order prime to $p$. Then $\mathscr{G}^{(p)} := \mathscr{G}/\mathscr{N}$ is referred to as the {\it maximal prime-to-$p$ quotient} of $\mathscr{G}$.
It follows from the definition that for any finite group $\Phi$ of order prime to $p$, we have
$$
\mathrm{Hom}_{\mathrm{cont}}(\mathscr{G} , \Phi) = \mathrm{Hom}_{\mathrm{cont}}(\mathscr{G}^{(p)} , \Phi).
$$
On the other hand, the image of any continuous homomorphism $\varphi \colon \mathscr{G}^{(p)} \to \Phi$ to a finite group $\Phi$ is a subgroup $\Phi' \subset \Phi$ of order prime to $p$. We conclude that $\mathscr{G}$ satisfies $(\mathrm{F}(p))$ if and only if $\mathscr{G}^{(p)}$ satisfies $(\mathrm{F})$. With these definitions in place, we are now ready to formulate the following.

\begin{thm}\label{T:tori-GR}
Let $\mathfrak{X}$ be a normal separated integral scheme of finite type over $\mathbb{Z}$ or over a field, let $K$ be the function field of $\mathfrak{X}$, and let $V$ be the set of discrete valuations of $K$ corresponding to the prime divisors on $\mathfrak{X}$. Assume that the fundamental group $\pi_1^{\text{\'et}}(\mathfrak{X})$ satisfies condition $(\mathrm{F}(p))$. Then for every $d \geq 1$, there exist finitely many $K$-isomorphism classes of $d$-dimensional $K$-tori $T$ that have good reduction at all $v \in V$ and for which the degree $[K_T : K]$ of the splitting field is prime to $p$.
\end{thm}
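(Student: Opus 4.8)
The plan is to recast the classification of tori as a problem about integral Galois representations and then count them using purity of the branch locus together with the hypothesis $(\mathrm{F}(p))$. Recall that sending a $d$-dimensional $K$-torus $T$ to its character lattice $X^*(T)$, viewed as a module over $\mathrm{Gal}(K^{\mathrm{sep}}/K)$, sets up a bijection between $K$-isomorphism classes of such tori and $\mathrm{GL}_d(\Z)$-conjugacy classes of continuous homomorphisms $\rho \colon \mathrm{Gal}(K^{\mathrm{sep}}/K) \to \mathrm{GL}_d(\Z)$. Under this dictionary $\mathrm{im}\,\rho$ is identified with $\mathrm{Gal}(K_T/K)$, so that $[K_T:K]$ is prime to $p$ precisely when $\mathrm{im}\,\rho$ has order prime to $p$; moreover $T$ has good reduction at $v \in V$ exactly when the inertia group at $v$ acts trivially on $X^*(T)$, i.e. when $\rho$ is unramified at $v$. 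Thus the theorem reduces to showing that there are only finitely many $\mathrm{GL}_d(\Z)$-conjugacy classes of continuous homomorphisms $\rho$ that are unramified at every $v \in V$ and have image of order prime to $p$.

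First I would fix the target group. By the classical finiteness theorem (Jordan--Zassenhaus, going back to Minkowski) that $\mathrm{GL}_d(\Z)$ contains only finitely many conjugacy classes of finite subgroups, the image of any admissible $\rho$ is conjugate to one of finitely many fixed finite subgroups $\Phi_1, \dots, \Phi_m$, each of order prime to $p$. Since conjugate homomorphisms define isomorphic tori, it then suffices to prove, for each fixed $\Phi = \Phi_j$, that the set of continuous homomorphisms $\rho \colon \mathrm{Gal}(K^{\mathrm{sep}}/K) \to \Phi$ unramified at all $v \in V$ is finite (the passage back to isomorphism classes introduces only the finite ambiguity of conjugation by $N_{\mathrm{GL}_d(\Z)}(\Phi)$).

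The crux is to control these homomorphisms via the fundamental group. Given such a $\rho$, let $L$ be the finite Galois extension of $K$ cut out by $\ker \rho$; it has degree prime to $p$ and is unramified at every prime divisor of $\mathfrak{X}$, so the normalization $\mathfrak{Y} \to \mathfrak{X}$ of $\mathfrak{X}$ in $L$ is finite and \'etale in codimension one. On the regular locus $\mathfrak{X}^{\mathrm{reg}}$, whose complement has codimension $\geq 2$ (by normality) and hence supports no element of $V$, Zariski--Nagata purity of the branch locus shows that the restricted cover is finite \'etale; thus $\rho$ factors through $\pi_1^{\text{\'et}}(\mathfrak{X}^{\mathrm{reg}})$ and lies in $\mathrm{Hom}_{\mathrm{cont}}(\pi_1^{\text{\'et}}(\mathfrak{X}^{\mathrm{reg}}), \Phi)$. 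It therefore suffices to deduce that $\pi_1^{\text{\'et}}(\mathfrak{X}^{\mathrm{reg}})$ again satisfies $(\mathrm{F}(p))$ from the hypothesis on $\pi_1^{\text{\'et}}(\mathfrak{X})$. This is where the prime-to-$p$ restriction is indispensable: the kernel $N$ of the surjection $\pi_1^{\text{\'et}}(\mathfrak{X}^{\mathrm{reg}}) \twoheadrightarrow \pi_1^{\text{\'et}}(\mathfrak{X})$ is generated by the local inertia subgroups at the finitely many components of the singular locus, and although these can be wildly infinite in characteristic $p$, their \emph{tame} parts are topologically finitely generated. Consequently, on prime-to-$p$ quotients the relevant kernel is finitely generated, and I would argue that $(\mathrm{F}(p))$ propagates along such an extension. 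Granting this, applying $(\mathrm{F}(p))$ — equivalently, that $\pi_1^{\text{\'et}}(\mathfrak{X})^{(p)}$ satisfies $(\mathrm{F})$ — shows $\mathrm{Hom}_{\mathrm{cont}}(\pi_1^{\text{\'et}}(\mathfrak{X}^{\mathrm{reg}}), \Phi)$ is finite, which completes the count.

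I expect the main obstacle to be exactly this comparison of fundamental groups across the singular locus. Purity of the branch locus is available only after restricting to $\mathfrak{X}^{\mathrm{reg}}$, and for a merely normal $\mathfrak{X}$ a prime-to-$p$ cover \'etale in codimension one genuinely need not extend to $\mathfrak{X}$ — already a quotient singularity produces such a cover — so one cannot simply replace $\pi_1^{\text{\'et}}(\mathfrak{X}^{\mathrm{reg}})$ by $\pi_1^{\text{\'et}}(\mathfrak{X})$. The delicate points are the finite generation of the tame local fundamental groups of the singularities (so that the kernel $N$ is topologically finitely generated after passing to prime-to-$p$ quotients) and the verification that condition $(\mathrm{F}(p))$ is inherited by an extension with topologically finitely generated kernel; together these reconcile the hypothesis on $\pi_1^{\text{\'et}}(\mathfrak{X})$ with a count that is intrinsically governed by $\pi_1^{\text{\'et}}(\mathfrak{X}^{\mathrm{reg}})$.
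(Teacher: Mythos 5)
Your overall strategy coincides with the paper's: the dictionary between $d$-dimensional $K$-tori and continuous representations $\rho\colon \mathrm{Gal}(K^{\mathrm{sep}}/K)\to \mathrm{GL}_d(\Z)$, the translation of good reduction into unramifiedness of the splitting field, the reduction via Jordan--Zassenhaus to finitely many target groups $\Phi$ of order prime to $p$, and the use of purity together with condition $(\mathrm{F}(p))$ to bound the set of homomorphisms. The divergence is in where purity is applied. The paper (Lemma \ref{L-ZNPurity} and Proposition \ref{P-HigherHM}) applies the Zariski--Nagata theorem to the normalization $\mathfrak{Y}\to\mathfrak{X}$ itself, concluding that the maximal extension $K_V/K$ unramified at all $v\in V$ equals $K_{\mathfrak{X}}$, so that $\mathrm{Gal}(K_V/K)\simeq \pi_1^{\text{\'et}}(\mathfrak{X})$ and the hypothesis is used verbatim. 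You instead restrict to $\mathfrak{X}^{\mathrm{reg}}$ before invoking purity (and you are right that the purity statement quoted in the paper requires a regular base, so your caution points at a real subtlety), but this lands you in $\mathrm{Hom}_{\mathrm{cont}}(\pi_1^{\text{\'et}}(\mathfrak{X}^{\mathrm{reg}}),\Phi)$ and forces a comparison between $\pi_1^{\text{\'et}}(\mathfrak{X}^{\mathrm{reg}})$ and $\pi_1^{\text{\'et}}(\mathfrak{X})$ that the paper's route never has to make.

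That comparison is a genuine gap in your proposal, and your sketch does not close it. The kernel $N$ of $\pi_1^{\text{\'et}}(\mathfrak{X}^{\mathrm{reg}})\twoheadrightarrow\pi_1^{\text{\'et}}(\mathfrak{X})$ being \emph{normally} generated by finitely many topologically finitely generated (tame/prime-to-$p$) local inertia subgroups does not make $N$, or its image in the maximal prime-to-$p$ quotient, topologically finitely generated: already the kernel of a surjection $\widehat{F_2}\to\widehat{\Z}$ is the closed normal closure of a single element yet is not finitely generated. The extension lemma for $(\mathrm{F}(p))$ (the last lemma of \S 5, or \cite[Lemma 2.7]{HH-small}) requires the kernel itself to satisfy $(\mathrm{F}(p))$, which you have not established; nor is the finite generation of the prime-to-$p$ local fundamental groups of normal singularities in arbitrary dimension something you can simply cite. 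Moreover, a homomorphism $\pi_1^{\text{\'et}}(\mathfrak{X}^{\mathrm{reg}})\to\Phi$ need not kill $N$, so even full control of $N$ as a normal subgroup would not by itself reduce your count to $\mathrm{Hom}_{\mathrm{cont}}(\pi_1^{\text{\'et}}(\mathfrak{X}),\Phi)$. As written, what your argument establishes is the finiteness statement under the hypothesis that $\pi_1^{\text{\'et}}(\mathfrak{X}^{\mathrm{reg}})$ satisfies $(\mathrm{F}(p))$; the passage back to the stated hypothesis on $\pi_1^{\text{\'et}}(\mathfrak{X})$ --- which the paper obtains in one step by identifying $\mathrm{Gal}(K_V/K)$ with $\pi_1^{\text{\'et}}(\mathfrak{X})$ --- remains to be supplied.
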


\noindent (Since $\mathfrak{X}$ is irreducible, the isomorphism class of the fundamental group does not depend on the choice of a geometric point, so, to simplify notation, we do not specify the geometric point in the statement. We also note that when $p = 1$, i.e. when the fundamental group is small, the conclusion applies to all $d$-dimensional tori without any restrictions on the degree of the splitting field.)

For the argument, we fix an algebraic closure $\overline{K}$ of $K$ and let $\overline{y} \colon {\rm Spec}(\overline{K}) \to \mathfrak{X}$ be the corresponding geometric point of $\mathfrak{X}.$ We also fix a separable closure $K^{\rm sep} \subset \overline{K}.$
Furthermore, we denote by $K_V/K$ the maximal subextension of $K^{\rm sep}$ that is unramified at all $v \in V$. With these notations, we have the following statement, which is a key element in the proof of Theorem \ref{T:tori-GR}.


\begin{prop}\label{P-HigherHM}
The extension $K_V/K$ is Galois and $\Ga(K_V/K)$ satisfies condition $(\mathrm{F}(p))$.
\end{prop}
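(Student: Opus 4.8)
The plan is to prove Proposition~\ref{P-HigherHM} by relating the Galois group $\Ga(K_V/K)$ to the \'etale fundamental group $\pi_1^{\text{\'et}}(\mathfrak{X})$, whose property $(\mathrm{F}(p))$ is assumed. First I would verify that $K_V/K$ is Galois: it is the compositum (inside $K^{\rm sep}$) of all finite separable subextensions that are unramified at every $v \in V$, and since the property of being unramified at a fixed $v$ is preserved under conjugation by $\Ga(K^{\rm sep}/K)$, the field $K_V$ is stable under the full absolute Galois group, hence Galois over $K$. This part is routine.

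The heart of the argument is the identification of $\Ga(K_V/K)$ with $\pi_1^{\text{\'et}}(\mathfrak{X})$, or at least with a quotient of it that still satisfies $(\mathrm{F}(p))$. The key input is Zariski--Nagata purity: a finite separable extension $L/K$ that is unramified at all the codimension-one points of the normal scheme $\mathfrak{X}$ (i.e.\ unramified at all $v \in V$, which are exactly the discrete valuations attached to the prime divisors) corresponds, after normalizing $\mathfrak{X}$ in $L$, to an \'etale cover of $\mathfrak{X}$. Conversely every connected finite \'etale cover of $\mathfrak{X}$ gives such an unramified extension. This establishes an equivalence of categories between the finite \'etale covers of $\mathfrak{X}$ and the finite subextensions of $K_V/K$, which translates into a canonical isomorphism
$$
\Ga(K_V/K) \simeq \pi_1^{\text{\'et}}(\mathfrak{X}, \overline{y}).
$$
Once this isomorphism is in hand, the property $(\mathrm{F}(p))$ for $\Ga(K_V/K)$ is immediate from the corresponding assumption on $\pi_1^{\text{\'et}}(\mathfrak{X})$, since $(\mathrm{F}(p))$ is a purely group-theoretic condition preserved under isomorphism.

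The main obstacle, and the step requiring the most care, is the purity statement itself and its clean transcription into Galois theory. Zariski--Nagata purity in the form I need applies to regular (or at least normal together with the relevant depth hypotheses) schemes, so I would want to invoke the correct version valid for normal $\mathfrak{X}$ of finite type over $\Z$ or over a field — paying attention to the fact that unramifiedness at all prime divisors must be shown to force unramifiedness in codimension one everywhere, which is exactly what the divisorial description of $V$ supplies. A secondary subtlety is making sure the normalization of $\mathfrak{X}$ in an unramified $L$ is finite over $\mathfrak{X}$ and \'etale, rather than merely unramified in codimension one; here normality of $\mathfrak{X}$ and the purity of the branch locus are precisely what close the gap. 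I expect the group-theoretic conclusion to follow formally once these geometric inputs are correctly assembled.
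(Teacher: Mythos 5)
Your proposal follows the paper's argument essentially verbatim: the paper likewise uses the Zariski--Nagata purity theorem (its Lemma~\ref{L-ZNPurity}) to identify $K_V$ with the compositum $K_{\mathfrak{X}}$ of subextensions whose normalizations are \'etale over $\mathfrak{X}$, and then invokes the canonical isomorphism $\Ga(K_{\mathfrak{X}}/K) \simeq \pi_1^{\text{\'et}}(\mathfrak{X}, \bar{y})$ to transfer condition $(\mathrm{F}(p))$. The subtleties you flag (normal versus regular base, \'etaleness in codimension one forcing \'etaleness everywhere) are exactly the points the paper handles via its cited form of the purity theorem, so there is no substantive difference.
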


We begin the proof with the following

\begin{lemma}\label{L-ZNPurity}
With the above notations, let $K_{\mathfrak{X}}/K$ be the compositum of all finite subextensions $L/K$ of $K^{\rm sep}$ such that the normalization of $\mathfrak{X}$ in $L$ is \'etale over $\mathfrak{X}$. Then $K_{\mathfrak{X}} = K_V.$
\end{lemma}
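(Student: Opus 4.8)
The plan is to establish the equality $K_{\mathfrak{X}} = K_V$ by comparing the two extensions at the level of finite subextensions and then invoking Zariski--Nagata purity of the branch locus. Since both $K_{\mathfrak{X}}$ and $K_V$ are compositums of their finite subextensions of $K^{\rm sep}$ (a subextension of an extension unramified at a given $v$ is again unramified at $v$, because ramification indices and residue degrees are multiplicative in towers), it suffices to prove the following equivalence for a \emph{finite} subextension $L/K$ of $K^{\rm sep}$: the normalization $\mathfrak{X}_L$ of $\mathfrak{X}$ in $L$ is \'etale over $\mathfrak{X}$ if and only if $L/K$ is unramified at every $v \in V$. Granting this, taking compositums on both sides yields the lemma.

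The forward implication is immediate and gives the inclusion $K_{\mathfrak{X}} \subseteq K_V$: if the finite morphism $\pi \colon \mathfrak{X}_L \to \mathfrak{X}$ is \'etale, then it is unramified at every point of $\mathfrak{X}$, in particular at the generic point of each prime divisor, i.e. at each codimension-one point; this says exactly that $L_w/K_v$ is unramified for all $v \in V$ and $w \mid v$. For the substantive converse, I would start from a finite $L/K$ unramified at all $v \in V$ and show that $\pi$ is \'etale. Let $B \subseteq \mathfrak{X}$ denote the branch locus, i.e. the closed set of points over which $\pi$ fails to be \'etale. Because $\mathfrak{X}$ is normal, its regular locus is open with complement of codimension $\geq 2$, so in particular every codimension-one point of $\mathfrak{X}$ lies in the regular locus and is a center of some $v \in V$. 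The hypothesis that $L$ is unramified at every $v \in V$ then says precisely that $B$ contains no codimension-one point. Purity of the branch locus asserts that, over the regular locus, $B$ is either empty or pure of codimension one; since it meets no codimension-one point, it must be empty there, so $\pi$ is finite \'etale over the regular locus. The content of the lemma is that this \'etaleness propagates across the codimension $\geq 2$ locus to produce a finite \'etale cover of all of $\mathfrak{X}$, placing $L$ inside $K_{\mathfrak{X}}$ and giving $K_V \subseteq K_{\mathfrak{X}}$.

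The main obstacle is exactly this last point: ruling out ramification hidden in codimension $\geq 2$. This is the heart of the Zariski--Nagata purity theorem, and it is where the regularity of the model enters, since normality by itself controls only the behavior in codimension one. Thus the argument must apply purity in the form valid over the regular model (reducing, if necessary, to the regular locus, which already carries all the valuations in $V$) and then propagate the conclusion to $\mathfrak{X}$. Once $\pi$ is known to be finite \'etale over $\mathfrak{X}$, the lemma follows from the standard dictionary identifying connected finite \'etale covers of $\mathfrak{X}$ with finite separable subextensions of $K^{\rm sep}$ whose normalization is \'etale over $\mathfrak{X}$ (so that $\Ga(K_{\mathfrak{X}}/K)$ is $\pi_1^{\text{\'et}}(\mathfrak{X})$), which is the identification used downstream in the proof of Proposition \ref{P-HigherHM}.
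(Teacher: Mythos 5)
Your overall route is the same as the paper's: the inclusion $K_{\mathfrak{X}} \subseteq K_V$ is immediate from the definitions, and the reverse inclusion is obtained by taking a finite subextension $L/K$ unramified at all $v \in V$, observing that the normalization $\mathfrak{Y} \to \mathfrak{X}$ is then \'etale over every codimension-one point, and invoking Zariski--Nagata purity (Theorem \ref{T-ZariskiNagata}) to conclude that $\mathfrak{Y} \to \mathfrak{X}$ is \'etale everywhere. The reduction to finite subextensions and the translation between ``unramified at all $v \in V$'' and ``\'etale over all codimension-one points'' are exactly as in the paper's proof of Lemma \ref{L-ZNPurity}.

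The divergence, and the gap, lies in how you handle the regularity hypothesis in purity. You correctly observe that Theorem \ref{T-ZariskiNagata} requires the base to be regular while $\mathfrak{X}$ is only assumed normal, and you propose to apply purity over the regular locus $\mathfrak{X}^{\mathrm{reg}}$ (which indeed contains all codimension-one points) and then ``propagate'' the \'etaleness across the codimension-$\geq 2$ singular locus. That last step is not a formality and in fact fails for general normal schemes: for $\mathfrak{X} = \mathbb{A}^2_{\mathbb{C}}/\{\pm 1\}$, the double cover $\mathbb{A}^2_{\mathbb{C}} \to \mathfrak{X}$ is the normalization of $\mathfrak{X}$ in a quadratic extension of $K$, is \'etale over $\mathfrak{X}^{\mathrm{reg}}$ (hence unramified at every $v \in V$), yet is not \'etale --- not even flat --- at the cone point. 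So \'etaleness over $\mathfrak{X}^{\mathrm{reg}}$ does not automatically extend over the singular locus, and the equality $K_{\mathfrak{X}} = K_V$ genuinely requires $\mathfrak{X}$ to be regular (or requires reading $K_{\mathfrak{X}}$, and hence the fundamental group used downstream, with respect to $\mathfrak{X}^{\mathrm{reg}}$ rather than $\mathfrak{X}$). The paper's own proof simply applies Theorem \ref{T-ZariskiNagata} directly to $\mathfrak{X}$ and does not address this point; your instinct that something must be said here was sound, but the propagation argument you sketch cannot be completed as stated and is exactly where your proof has a hole.
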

\begin{proof}
It follows from the definitions that we have the inclusion $K_{\mathfrak{X}} \subset K_V.$ To show the reverse inclusion, suppose that $L/K$ is a finite subextension of $K^{\rm sep}$ that is unramified at all $v \in V$, and let $\mathfrak{Y}$ be the normalization of $\mathfrak{X}$ in $L.$ Then by assumption, $\mathfrak{Y} \to \mathfrak{X}$ is finite \'etale over each codimension 1 point of $\mathfrak{X}$. The Zariski-Nagata purity theorem, whose statement we recall below for completeness, then implies that $\mathfrak{Y}$ is \'etale over $\mathfrak{X}$, hence $L \subset K_{\mathfrak{X}}.$
\end{proof}

\begin{thm}\label{T-ZariskiNagata}{\rm (Zariski-Nagata purity theorem)} Let $\varphi \colon Y \to S$ be a finite surjective morphism of integral schemes, with $Y$ normal and $S$ regular. Assume that the fiber of $Y_P$ of $\varphi$ above each codimension 1 point of $S$ is \'etale over the residue field $\kappa(P).$ Then $\varphi$ is \'etale.

\end{thm}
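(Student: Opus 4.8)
The plan is to reduce the global statement to a local question in commutative algebra and then deduce $\varphi$ is étale from the purity of the branch locus. Since being étale is local on $S$ and the non-étale locus $Z \subset S$ is closed, it suffices to show $Z = \emptyset$. I would first record that the hypothesis is exactly the assertion that $\varphi$ is étale in codimension $1$: for a codimension-$1$ point $P$, the local ring $\mathcal{O}_{S,P}$ is a DVR, so the finite torsion-free module $B_P := B \otimes_{\mathcal{O}_{S,P}}$ (where $Y = \mathrm{Spec}(B)$ locally) is automatically free, hence flat over $\mathcal{O}_{S,P}$; thus étaleness of the fiber $Y_P$ over $\kappa(P)$ already forces $\varphi$ to be étale at $P$. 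Consequently $Z$ has codimension $\geq 2$ in $S$.

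Next I would localize at the generic point of a hypothetical irreducible component of $Z$, replacing $S$ by $\mathrm{Spec}(R)$ with $R$ a regular local ring of dimension $d \geq 2$, and $Y$ by $\mathrm{Spec}(B)$, where $B$ is the integral closure of $R$ in the finite separable extension $L = \mathrm{Frac}(B)$ of $K = \mathrm{Frac}(R)$. Normality of $Y$ guarantees that $B$ is a normal domain, finite over $R$, and by the previous paragraph $R \to B$ is étale over every height-$1$ prime. The goal reduces to: such a $B$ is étale over $R$, i.e. flat and unramified.

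The key mechanism is the discriminant. Once $B$ is known to be $R$-flat, hence finite free since $R$ is local, the trace form of $L/K$ produces a discriminant $\disc \in R$ whose vanishing locus $V(\disc)$ is precisely the ramification locus. Because a regular local ring is a UFD and $\disc$ is a single element, Krull's principal ideal theorem forces $V(\disc)$ to be empty or pure of codimension $1$. Since we arranged the non-étale locus to have codimension $\geq 2$, it must be empty, giving unramifiedness; combined with flatness this yields étaleness and the desired contradiction with $Z \neq \emptyset$. (Here I also use that, for a finite morphism, étale is equivalent to flat together with $\Omega_{B/R} = 0$.)

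The hard part will be establishing flatness of $B$ over $R$. When $d = 2$ this is immediate: a normal domain finite over a two-dimensional regular local ring is Cohen--Macaulay, because normality gives Serre's condition $S_2$, which is the full Cohen--Macaulay condition in dimension $2$, and a finite Cohen--Macaulay module over a regular local ring is free by the Auslander--Buchsbaum formula. For $d \geq 3$ normality yields only $S_2$, not Cohen--Macaulayness, so flatness is genuinely delicate; this is where the real content of the Zariski--Nagata theorem lies. I would resolve it by one of two standard routes: either (a) a Bertini-type reduction, cutting $S$ with general hyperplanes through a point of $Z$ so as to descend to the two-dimensional case while preserving normality of $Y$ and the codimension of the branch locus, or (b) Grothendieck's cohomological method, using that a regular local ring of dimension $\geq 2$ is parafactorial and that the reflexive $R$-module $B$ is determined by its restriction to the punctured spectrum $\mathrm{Spec}(R) \setminus \{\mathfrak{m}\}$, over which $\varphi$ is already étale, so that this étale structure extends across the closed point via the depth/local-cohomology estimates. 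Route (b) is the one that works uniformly in all characteristics, whereas route (a) is cleanest when Bertini for normality applies.
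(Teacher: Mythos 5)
The paper does not actually prove this statement: it is quoted as a classical black box, with pointers to \cite[Theorem 5.2.13]{Szamuely} for the statement and to \cite[Exp.~X, Th.~3.4]{SGA2} for a complete proof. So the comparison can only be with the standard (SGA~2) argument, and your sketch does follow its overall architecture correctly up to a point: the fibre hypothesis does force \'etaleness at every codimension-$1$ point (a finite torsion-free module over a DVR is free, and finite flat with \'etale fibre is \'etale), hence the non-\'etale locus $Z$ has codimension $\geq 2$; localizing at a generic point of $Z$ reduces to a regular local ring $R$ with $B$ its normalization in the (separable, since $\varphi$ is generically \'etale) extension $L/K$, \'etale over the punctured spectrum; and once $B$ is $R$-free, the principal discriminant ideal together with Krull's Hauptidealsatz rules out a non-\'etale locus of codimension $\geq 2$. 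The dimension-$2$ case of freeness via $S_2 \Rightarrow$ Cohen--Macaulay $\Rightarrow$ free (Auslander--Buchsbaum) is also correct.

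The genuine gap is the flatness of $B$ over $R$ when $\dim R \geq 3$, which you correctly identify as the heart of the theorem but do not prove. Normality gives only $\mathrm{depth}_R B \geq 2$, strictly less than the $\dim R$ required for freeness, and neither of your proposed repairs is executed. Route (a) is not a routine Bertini argument: cutting by a general element of $\mathfrak{m}$ need not preserve normality of $Y$ nor the codimension condition on the branch locus, and making this work is essentially Zariski's original, delicate, characteristic-restricted proof. Route (b) as described is circular: the fact that the reflexive module $B$ equals the pushforward of its restriction to $U = \mathrm{Spec}(R) \setminus \{\mathfrak{m}\}$ does not convert \'etaleness over $U$ into \'etaleness over $\mathrm{Spec}(R)$ --- that conversion \emph{is} the theorem, and the only depth estimate you have in hand ($\geq 2$) is insufficient. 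The actual proof in \cite[Exp.~X]{SGA2} runs an induction on $\dim R$ through the Lefschetz conditions, comparing \'etale covers of $U$ with \'etale covers of the formal completion along a hypersurface section; that formal-geometry input is absent from your sketch. (Parafactoriality, which you invoke, concerns $\mathrm{Pic}$ of the punctured spectrum and is the tool for factoriality of regular local rings, not for purity of the branch locus.) As written, your argument establishes the theorem only for $\dim S \leq 2$, whereas the paper applies it to models $\mathfrak{X}$ of arbitrary dimension.
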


\noindent (See, for example, \cite[Theorem 5.2.13]{Szamuely} for the statement and related discussion and \cite[Exp. X, Th\'eor\`eme 3.4]{SGA2} for a detailed proof.)

\vskip2mm

\noindent {\it Proof of Proposition \ref{P-HigherHM}.} Since $\pi^{\text{\'et}}(\mathfrak{X})$ is assumed to satisfy $(\mathrm{F}(p))$, our claim follows immediately from the well-known facts that $K_{\mathfrak{X}}/K$ is a Galois extension, and $\Ga(K_{\mathfrak{X}}/K)$ is canonically isomorphic to the fundamental group $\pi_1^{\text{\'et}}(\mathfrak{X}, \bar{y})$ for the geometric point $\bar{y} \colon {\rm Spec}(\overline{K}) \to \mathfrak{X}$ (see, for example, \cite[Proposition 5.4.9]{Szamuely}). $\Box$ 

\vskip2mm

\noindent We now turn to

\vskip1mm

\noindent {\it Proof of Theorem \ref{T:tori-GR}.} Recall that given a $d$-dimensional torus $T$, the action on the group of characters $X(T)$ gives rise to
a continuous representation $$\rho \colon \Ga (K^{\mathrm{sep}}/K) \to {\rm GL}_d(\Z)$$ of the absolute Galois group. The kernel $\ker \rho$ is the subgroup
$\Ga(K^{\mathrm{sep}}/K_T)$ corresponding to the splitting field $K_T$ of $T$, so the image is isomorphic to the Galois group $\Ga(K_T/K)$.
Moreover, we thereby obtain a bijection between the $K$-isomorphism classes of such tori and the equivalence classes of such representations (see, for example, \cite[\S 2.2.4]{Pl-R}). On the other hand, a $K$-torus $T$ has good reduction at a place $v$ of $K$ if and only if $T \times_K K_v$ splits over an unramified extension of the completion $K_v$ (see, for example, \cite[1.1]{NX}); in other words, the extension $K_T/K$ is unramified at $v$. This means that
the $K$-isomorphism classes of $d$-dimensional $K$-tori having good reduction at all $v \in V$ are in bijection with the equivalence classes of continuous representations $\rho \colon \Ga(K_V/K) \to {\rm GL}_d(\Z).$

Next, by reduction theory (see \cite[Theorem 4.9]{Pl-R}), the group $\mathrm{GL}_d(\mathbb{Z})$ has finitely many conjugacy classes of finite subgroups; let $\Phi_1, \ldots , \Phi_r$ be a complete set of representatives of the conjugacy classes of subgroups that have order prime to $p$. Then the isomorphism class of a given $K$-torus $d$-dimensional $T$ that has good reduction at all $v \in V$ and for which the degree $[K_T : K]$ is prime to $p$ corresponds to the equivalence class of a representation $\rho \colon \Ga(K_V/K) \to {\rm GL}_d(\Z)$ whose image is one of the $\Phi_i$'s. But according to Proposition \ref{P-HigherHM}, for each $i \in \{1, \ldots , r\}$, there are finitely continuous homomorphisms $\Ga(K_V/K) \to \Phi_i$. It follows that there are finitely
many equivalence classes of the relevant representations $\rho \colon \Ga(K_V/K) \to {\rm GL}_d(\Z)$, hence finitely many isomorphism classes of tori having good reduction at all $v \in V$. $\Box$

\vskip5mm



\section{Tori with good reduction over function fields}

As we observed in \cite{RR-tori}, the finiteness results for the isomorphism classes of tori of a given dimension having good reduction at a certain natural set of places of the base field cannot be extended from characteristic zero to positive characteristic without additional assumptions. The precise conditions for function fields in all characteristics are given in the next statement, which will then be used to treat the case of finitely generated fields in positive characteristic.
%
%
%
%
We let $p$ denote the characteristic exponent of the base field $k$, i.e. $p = 1$ if $\mathrm{char}\: k = 0$ and $p = \mathrm{char}\: k$ otherwise.
\begin{thm}\label{T:Z1}
Let $K = k(X)$ be the function field of a normal geometrically integral variety defined over a field $k$ of type $(\mathrm{F})$, and let $V$ be the set of discrete valuations of $K$ associated with the prime divisors of $X$.

\vskip2mm

\noindent \ {\rm (i)} \parbox[t]{16cm}{If $X$ is complete, then for each $d \geq 1$, the set of $K$-isomorphism classes of $d$-dimensional $K$-tori that have good
reduction at all $v \in V$ is finite.}

\vskip2mm

\noindent {\rm (ii)} \parbox[t]{16cm}{In the general case, for each $d \geq 1$, the set of $K$-isomorphism classes of $d$-dimensional $K$-tori $T$ that have good reduction at all $v \in V$ and for which the degree $[K_T : K]$ of the splitting field is prime $p$ is finite.}

\end{thm}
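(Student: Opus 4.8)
The plan is to deduce both parts from Theorem~\ref{T:tori-GR} applied to the model $\mathfrak{X} = X$ itself, which is a normal integral scheme of finite type over the field $k$ and therefore meets the hypotheses of that theorem on $\mathfrak{X}$. The whole problem then reduces to verifying the appropriate finiteness condition on the \'etale fundamental group $\pi_1^{\text{\'et}}(X)$. For part (ii) I would take $p$ to be the characteristic exponent and show that $\pi_1^{\text{\'et}}(X)$ satisfies $(\mathrm{F}(p))$; Theorem~\ref{T:tori-GR} then yields precisely the asserted finiteness for tori with good reduction and splitting degree prime to $p$. For part (i), I would show that when $X$ is complete the group $\pi_1^{\text{\'et}}(X)$ is in fact \emph{small}, i.e. satisfies $(\mathrm{F}) = (\mathrm{F}(1))$; invoking Theorem~\ref{T:tori-GR} in the case ``$p = 1$'' (the remark following its statement) then removes the restriction on the splitting degree and gives finiteness for all $d$-dimensional tori with good reduction.

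The common engine is the homotopy exact sequence for the geometric point already fixed. Since $X$ is geometrically integral it is geometrically connected, and the structure morphism $X \to \mathrm{Spec}\: k$ yields an exact sequence
$$
1 \to N \longrightarrow \pi_1^{\text{\'et}}(X) \longrightarrow \Ga(k^{\mathrm{sep}}/k) \to 1,
$$
where $N$ is the image of the geometric fundamental group. I only use right-exactness here, so the possible failure of injectivity of $\pi_1^{\text{\'et}}(X_{\overline{k}}) \to \pi_1^{\text{\'et}}(X)$ is irrelevant; in particular $N$ is a continuous quotient of $\pi_1^{\text{\'et}}(X_{\overline{k}})$. By hypothesis $\Ga(k^{\mathrm{sep}}/k)$ is small, hence satisfies both $(\mathrm{F})$ and $(\mathrm{F}(p))$. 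A short group-theoretic lemma shows that each of $(\mathrm{F})$ and $(\mathrm{F}(p))$ is inherited by quotients and is closed under extensions of profinite groups: for $(\mathrm{F})$ this is classical, and for $(\mathrm{F}(p))$ it follows by passing to maximal prime-to-$p$ quotients (a right-exact operation) and using that $\mathscr{G}$ satisfies $(\mathrm{F}(p))$ exactly when $\mathscr{G}^{(p)}$ satisfies $(\mathrm{F})$. Consequently it suffices to verify the relevant condition for $N$, and for that it suffices to verify it for $\pi_1^{\text{\'et}}(X_{\overline{k}})$.

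The geometric input is where the two parts diverge. For part (i), completeness of $X$ forces $X_{\overline{k}}$ to be a proper (connected) scheme over the algebraically closed field $\overline{k}$, and the \'etale fundamental group of such a scheme is topologically finitely generated; being finitely generated it is small, so $\pi_1^{\text{\'et}}(X_{\overline{k}})$ satisfies $(\mathrm{F})$ and the extension argument gives that $\pi_1^{\text{\'et}}(X)$ is small. For part (ii), where $X$ need not be complete, the full geometric fundamental group may fail to be finitely generated in characteristic $p$ because of Artin--Schreier covers, but its maximal prime-to-$p$ quotient is still topologically finitely generated. Here I would first replace $X$ by its smooth locus $U$: this does not change $K = k(U)$ and only shrinks $V$, hence only \emph{enlarges} the class of admissible tori, so finiteness for $U$ implies finiteness for $X$, while $U$ remains normal integral (indeed smooth) of finite type over $k$ and geometrically integral. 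Then $U_{\overline{k}}$ is smooth over $\overline{k}$, and the finite generation of the tame fundamental group of a smooth variety over an algebraically closed field shows that $\pi_1^{\text{\'et}}(U_{\overline{k}})^{(p)}$ is small; thus $\pi_1^{\text{\'et}}(U_{\overline{k}})$ satisfies $(\mathrm{F}(p))$, and the extension argument delivers $(\mathrm{F}(p))$ for $\pi_1^{\text{\'et}}(U)$.

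I expect the main obstacle to be precisely these topological finite generation statements for geometric fundamental groups, which are the only genuinely deep inputs: the proper case rests on comparison with the complex topological fundamental group in characteristic zero together with Grothendieck's specialization theory, while the tame fundamental group of a non-proper variety in positive characteristic requires good compactifications via de Jong's alterations. The remaining points are routine and I would dispatch them with care but little comment --- the base-change comparison $\pi_1^{\text{\'et}}(X_{\overline{k}}) = \pi_1^{\text{\'et}}(X_{k^{\mathrm{sep}}})$ across the purely inseparable extension $\overline{k}/k^{\mathrm{sep}}$, the closure of $(\mathrm{F})$ and $(\mathrm{F}(p))$ under extensions and quotients, and the identification of good-reduction tori with representations of $\pi_1^{\text{\'et}}(X) = \Ga(K_V/K)$ exactly as in the proof of Theorem~\ref{T:tori-GR}.
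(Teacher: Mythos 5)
Your proposal is correct and follows essentially the same route as the paper: both parts are reduced to Theorem \ref{T:tori-GR} by verifying condition $(\mathrm{F})$, resp.\ $(\mathrm{F}(p))$, for $\pi_1^{\text{\'et}}(X)$ via the homotopy exact sequence over $\mathrm{Gal}(k^{\mathrm{sep}}/k)$, with the same deep inputs (topological finite generation of the geometric fundamental group in the proper case, and of its prime-to-$p$ quotient after shrinking to a smooth open subset in the general case) and the same closure-under-extensions lemma for $(\mathrm{F}(p))$. The only cosmetic difference is that the paper works with $X^s = X \times_k k^{\mathrm{sep}}$ and shrinks to a smooth \emph{affine} open subset, whereas you use $X_{\overline{k}}$ and the smooth locus.
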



If $\mathrm{char}\: k = 0$, then part (ii) of the theorem applies to any $d$-dimensional torus $T$ having good reduction at all $v \in V$, thus yielding Theorem \ref{T:3}. On the other hand, any finitely generated field $K$ of characteristic $p > 0$ can be realized as the function field $k(X)$ of a geometrically integral normal variety $X$ over a finite field $k$. The choice of such a realization gives rise to a divisorial set $V$ of discrete valuations of $K$ that are associated to prime divisors of $X$. If $X$ is chosen to be complete, the corresponding $V$ is also called {\it complete}. Since finite fields are of type (F), we obtain the following.
\begin{cor}
Let $K$ be a finitely generated field of characteristic $p > 0$, and let $V$ be a divisorial set of places of $K$.

\vskip1mm

\noindent {\rm (i)} \parbox[t]{16cm}{If $V$ is complete, then for any $d \geq 1$, the set of $K$-isomorphism classes of $d$-dimensional $K$-tori
that have good reduction at all $v \in V$ is finite.}

\vskip1mm

\noindent {\rm (ii)} \parbox[t]{16cm}{In the general case, for any $d \geq 1$, the set of $K$-isomorphism classes of $d$-dimensional $K$-tori $T$ that have good reduction at all $v \in V$ and for which the degree $[K_T : K]$ of the splitting field is prime to $p$ is finite.}

\end{cor}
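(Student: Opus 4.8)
The plan is to derive both parts directly from Theorem \ref{T:Z1} by realizing $K$ as the function field of a geometrically integral normal variety over a \emph{finite} base field, so that the hypothesis that the base field is of type $(\mathrm{F})$ becomes automatic. First I would introduce the constant field: let $k$ be the algebraic closure of the prime field $\mathbb{F}_p$ inside $K$. Since $K$ is finitely generated over $\mathbb{F}_p$, the field $k$ is finite; in particular $\mathrm{Gal}(k^{\mathrm{sep}}/k) \cong \widehat{\mathbb{Z}}$ is topologically finitely generated, hence small, so $k$ is of type $(\mathrm{F})$. Because $k$ is perfect, the extension $K/k$ is separable, and by construction $k$ is algebraically closed in $K$; therefore $K/k$ is a regular extension, and any normal integral model of $K$ over $\mathbb{F}_p$ gives rise to a variety that is \emph{geometrically} integral over $k$.

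The next step is to match the given divisorial set $V$ with the geometric places of a $k$-variety. By definition, $V$ consists of the discrete valuations attached to the prime divisors on some normal integral separated scheme $\mathfrak{X}$ of finite type over $\mathbb{Z}$ with function field $K$; as $\mathrm{char}\, K = p$, the structure morphism factors through $\mathrm{Spec}\, \mathbb{F}_p$. The observation that makes everything work is that $\mathfrak{X}$ is automatically a $k$-scheme: since $\mathfrak{X}$ is normal and integral, its ring of global functions $\Gamma(\mathfrak{X}, \mathcal{O}_{\mathfrak{X}})$ is integrally closed in $K$, and every element of $k$ is integral over $\mathbb{F}_p \subset \Gamma(\mathfrak{X}, \mathcal{O}_{\mathfrak{X}})$ and lies in $K$, hence already lies in $\Gamma(\mathfrak{X}, \mathcal{O}_{\mathfrak{X}})$. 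Combined with the regularity of $K/k$ noted above, this shows that $X := \mathfrak{X}$ is a normal geometrically integral variety over the finite field $k$ whose codimension-one points are exactly those of $\mathfrak{X}$, so that $V$ is precisely the set of discrete valuations associated with the prime divisors of $X$. This bookkeeping — confirming that an arbitrary divisorial set in characteristic $p$ coincides with the geometric places of a geometrically integral normal $k$-variety — is the only point requiring care, and is the main (if modest) obstacle; it rests entirely on the two facts that a normal model automatically contains the finite constant field in its global functions and that $K/k$ is regular because $k$ is perfect and algebraically closed in $K$.

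With this realization in place the two assertions follow immediately. For part (ii), the characteristic exponent of the type-$(\mathrm{F})$ field $k$ is $p$, so Theorem \ref{T:Z1}(ii) applied to $X$ and $V$ yields the finiteness of the set of $d$-dimensional $K$-tori with good reduction at all $v \in V$ whose splitting field has degree prime to $p$. For part (i), the assumption that $V$ is \emph{complete} means, by definition, that $V$ arises as the set of prime divisors on a complete normal variety $X$ over a finite field $k$; by the discussion above such an $X$ is geometrically integral over the type-$(\mathrm{F})$ field $k$, so Theorem \ref{T:Z1}(i) applies and gives finiteness for all $d$-dimensional $K$-tori with good reduction at all $v \in V$, with no restriction on the degree of the splitting field. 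This completes the reduction in both cases.
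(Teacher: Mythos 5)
Your proposal is correct and follows essentially the same route as the paper: realize $K$ as the function field of a normal geometrically integral variety over its (finite, hence type $(\mathrm{F})$) constant field and invoke Theorem \ref{T:Z1}. The only difference is that you spell out the bookkeeping (that a normal model over $\mathbb{Z}$ automatically lives over the constant field $k$ and is geometrically integral there, with the same codimension-one points) which the paper leaves implicit in the sentence preceding the corollary.
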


\vskip2mm

We will derive both parts of Theorem \ref{T:Z1} from Theorem \ref{T:tori-GR}. Its application in the situation of part (i) is justified
by the following statement.
\begin{prop}\label{P:Z1}
Let $X$ be a complete normal geometrically integral variety over a field $k$ of type $(\mathrm{F})$. Then the \'etale fundamental group $\pi_1^{\text{\'et}}(X)$ (with respect to any geometric point) satisfies $(\mathrm{F})$, i.e. is small.
\end{prop}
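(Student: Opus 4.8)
The plan is to present $\pi_1^{\text{\'et}}(X)$ as an extension of the absolute Galois group of $k$ by the geometric fundamental group, and then to play the hypothesis that $\Ga(k^{\mathrm{sep}}/k)$ is small against the finite generation of the geometric part, which is exactly where completeness of $X$ will enter. First I would invoke the fundamental exact sequence of \'etale fundamental groups. Since $X$ is geometrically integral it is geometrically connected, and being a variety it is quasi-compact and quasi-separated; hence for a suitable geometric point $\bar{x}$ one has the short exact sequence of profinite groups
$$
1 \to \pi_1^{\text{\'et}}(X_{k^{\mathrm{sep}}}, \bar{x}) \longrightarrow \pi_1^{\text{\'et}}(X, \bar{x}) \longrightarrow \Ga(k^{\mathrm{sep}}/k) \to 1
$$
(see, e.g., \cite{Szamuely} and \cite{Stacks}). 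Because $X$ is geometrically reduced, the base change $X_{\overline{k}} \to X_{k^{\mathrm{sep}}}$ is a universal homeomorphism and so induces an isomorphism on $\pi_1^{\text{\'et}}$; thus the geometric fundamental group above may be computed over an algebraic closure $\overline{k}$ as well.

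The key input, and the point I expect to be the main obstacle, is that the geometric fundamental group $N := \pi_1^{\text{\'et}}(X_{\overline{k}}, \bar{x})$ is topologically finitely generated. This is precisely where completeness of $X$ is used: for a proper variety over an algebraically closed field the fundamental group is topologically finitely generated. In characteristic zero this follows from the comparison with the topological fundamental group of the associated complex analytic space, which is that of a compact space and hence finitely generated. In positive characteristic it is a nontrivial theorem (see \cite{Stacks} and the references therein), and I would isolate the verification, or the precise citation, of this finiteness in positive characteristic as the single genuinely geometric ingredient of the argument; note that it is false for non-complete $X$, which is why the non-complete case is handled separately under a prime-to-$p$ restriction on the splitting field.

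With these two facts in hand the conclusion is formal. I claim that an extension $1 \to N \to G \to Q \to 1$ of profinite groups with $N$ topologically finitely generated and $Q$ small is itself small, and I would apply it with $G = \pi_1^{\text{\'et}}(X)$ and $Q = \Ga(k^{\mathrm{sep}}/k)$. To prove the claim, fix a finite group $\Phi$ and consider the restriction map $\mathrm{Hom}_{\mathrm{cont}}(G, \Phi) \to \mathrm{Hom}_{\mathrm{cont}}(N, \Phi)$; the target is finite because $N$ is topologically finitely generated. For a fixed $\psi$ in the image, every $\phi$ restricting to $\psi$ satisfies $\ker\phi \cap N = \ker\psi$; since $\ker\phi$ and $N$ are both normal in $G$, the subgroup $L := \ker\psi$ is normal in $G$ and depends only on $\psi$, so each such $\phi$ factors through $G/L$. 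Now $G/L$ is an extension of the small group $Q$ by the \emph{finite} group $N/L$, and an extension of a small group by a finite normal subgroup is readily seen to be small (by induction on the order of the kernel, using the same observation that the intersection of $\ker\phi$ with the finite kernel is determined by the restriction). Hence $\mathrm{Hom}_{\mathrm{cont}}(G/L, \Phi)$ is finite, so every fiber of the restriction map is finite, and therefore $\mathrm{Hom}_{\mathrm{cont}}(G, \Phi)$ is finite. This shows that $\pi_1^{\text{\'et}}(X)$ satisfies condition $(\mathrm{F})$, i.e. is small, as required.
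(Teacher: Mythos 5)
Your route is the same as the paper's: the homotopy exact sequence $1 \to \pi_1^{\text{\'et}}(X^s,\bar x) \to \pi_1^{\text{\'et}}(X,\bar x) \to \mathrm{Gal}(k^{\mathrm{sep}}/k) \to 1$, the topological finite generation of the geometric fundamental group of a complete variety in any characteristic (SGA~1, Exp.~X, Th.~2.9, with Exp.~IX, 4.10 to pass between $k^{\mathrm{sep}}$ and $\overline{k}$), and then the group-theoretic fact that an extension of a small group by a topologically finitely generated profinite group is small. The paper outsources this last fact to \cite[Lemma 2.7]{HH-small}; you prove it inline, and that is where the one genuine issue sits.

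Your first reduction is fine: $\mathrm{Hom}_{\mathrm{cont}}(G,\Phi)\to\mathrm{Hom}_{\mathrm{cont}}(N,\Phi)$ has finite target, and the fiber over $\psi$ factors through $G/\ker\psi$, an extension of $Q$ by the finite group $N/\ker\psi$. But the parenthetical claim that an extension of a small group by a finite normal subgroup is ``readily seen'' to be small by induction on the order of the kernel does not close up: in the inductive step, when the restriction $\psi=\phi|_F$ to the finite kernel $F$ is \emph{injective}, one has $L=\ker\psi=\{1\}$, so $G/L=G$ and the induction does not decrease $|F|$. That injective case is the entire content of the lemma and needs a further idea --- for instance: the centralizer $C_G(F)$ is open of finite index (kernel of $G\to\mathrm{Aut}(F)$), and smallness of $G$ is equivalent to smallness of $C_G(F)$; now $F\cap C_G(F)=Z(F)$ is finite and \emph{central} in $C_G(F)$ with small quotient, and complements to (or homomorphisms out of an extension by) a finite central subgroup are controlled by a torsor under $\mathrm{Hom}_{\mathrm{cont}}$ from a small group into a finite abelian group, which is finite. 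Either supply that argument or simply cite \cite[Lemma 2.7]{HH-small} as the paper does. The geometric part of your proposal, including the correct identification of completeness as the ingredient guaranteeing finite generation of $\pi_1^{\text{\'et}}(X^s,\bar x)$, matches the paper and is correct.
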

\begin{proof}
We fix a separable closure $k^{\mathrm{sep}}$ of $k$, set $X^{s} = X \times_{\mathrm{Spec}\: k} \mathrm{Spec}\: k^{\mathrm{sep}}$, and pick a geometric point $\bar{x}$ of $X^s$. We then have the following standard exact sequence of profinite groups
\begin{equation}\label{E:Z1}
1 \to \pi_1^{\text{\'et}}(X^s, \bar{x}) \to \pi_1^{\text{\'et}}(X, \bar{x}) \to \Ga(k^{\mathrm{sep}}/k) \to 1.
\end{equation}
(see \cite[Exp. IX, Th. 6.1]{SGA1}) By our assumption, $\Ga(k^{\mathrm{sep}}/k)$ is small. Furthermore, since $X$ is complete, the fundamental group $\pi_1(X^s, \bar{x})$ is topologically finitely generated for $k$ of any characteristic (see Exp. X, Th. 2.9, combined with Exp. IX, 4.10, in \cite{SGA1}),
and hence is small as well. Applying \cite[Lemma~2.7]{HH-small}, we conclude that $\pi^{\text{\'et}}_1(X, \bar{x})$ is small.
\end{proof}

\vskip1mm

To treat part (ii) of the theorem using a similar strategy, we need information about finite generation of the prime-to $p$ part $\pi^{\text{\'et}}_1(X^s, \bar{x})^{(p)}$ of the fundamental group. Raynaud \cite{Ray} established this fact for any scheme $\mathscr{X}$ of finite type over a separably closed field
$\mathscr{K}$ assuming that all schemes of finite type over an algebraic closure of $\mathscr{K}$ that have dimension $\leq \dim \mathscr{X}$ are ``fortement d\'esingularisable" as defined in \cite[Exp. I, 3.1.5]{SGA5} (it should be noted that this assumption is known to hold if either $p = 1$ or $\dim \mathscr{X} \leq 2$). Later, by working with alterations, Orgogozo \cite{Orgogozo} established this fact unconditionally. Recently, a new proof of the finite generation
of the prime-to $p$ part of the fundamental group of a smooth quasi-projective variety over an algebraically closed field was given in \cite[Proposition 3.1]{Esn}, which, by \cite[Exp. IX, 4.10]{SGA1}, implies the same result over separably closed fields (we note that in \cite{Esn}, it is in fact shown that the tame fundamental group is finitely presented when there is a good compactification, from which the finite presentation of the prime-to $p$ part of the fundamental group is deduced).
In terms of proving part (ii), we can always replace $X$ with an open subset as this will result in a smaller set of places $V$. Thus, we can assume that $X$ is affine and smooth. In this case, part (ii)
follows from Theorem \ref{T:tori-GR} and the proposition below.
\begin{prop}\label{P:Z2}
Let $X$ be a geometrically integral smooth affine variety over a field $k$ whose absolute Galois group satisfies $(\mathrm{F}(p))$. Then the \'etale fundamental group $\pi_1^{\text{\'et}}(X)$ also satisfies  $(\mathrm{F}(p))$.
\end{prop}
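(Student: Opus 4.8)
The plan is to argue along the same lines as Proposition \ref{P:Z1}, but to work throughout with maximal prime-to-$p$ quotients in order to accommodate the fact that in positive characteristic the full geometric fundamental group need not be topologically finitely generated. Fix a geometric point $\bar{x}$ and set $G = \pi_1^{\text{\'et}}(X, \bar{x})$, $N = \pi_1^{\text{\'et}}(X^s, \bar{x})$, and $Q = \Ga(k^{\mathrm{sep}}/k)$, so that the exact sequence \eqref{E:Z1} reads $1 \to N \to G \to Q \to 1$. As recalled in the discussion preceding the proposition, the prime-to-$p$ quotient $N^{(p)}$ is topologically finitely generated (Orgogozo \cite{Orgogozo}, Esnault \cite{Esn}), hence small; by hypothesis $Q$ satisfies $(\mathrm{F}(p))$. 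To prove that $G$ satisfies $(\mathrm{F}(p))$, I would fix a finite group $\Phi$ of order prime to $p$ and show that $\mathrm{Hom}_{\mathrm{cont}}(G, \Phi)$ is finite. Restricting a homomorphism $\varphi \colon G \to \Phi$ to $N$ yields a homomorphism with prime-to-$p$ image, which therefore factors through $N^{(p)}$; since $N^{(p)}$ is small, there are only finitely many possibilities $\psi \in \mathrm{Hom}_{\mathrm{cont}}(N, \Phi)$ for the restriction $\varphi|_N$. It thus suffices to bound, for each such $\psi$, the number of $\varphi$ with $\varphi|_N = \psi$.

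Fix such a $\psi$, assume the fibre is nonempty, and choose a base point $\varphi_0$ with $\varphi_0|_N = \psi$; put $M_0 = \psi(N) \subseteq \Phi$. For any other $\varphi$ in the same fibre, compatibility with conjugation forces $c(g) := \varphi_0(g)^{-1}\varphi(g)$ to centralize $M_0$, so $c$ takes values in $A := C_{\Phi}(M_0)$; moreover $c$ is trivial on $N$ and defines a continuous $1$-cocycle of $Q$ with values in $A$, where $Q$ acts on $A$ by conjugation via $\varphi_0$ (this action is well defined on $Q$ because $\psi(N) = M_0$ centralizes $A$). As the map $\varphi \mapsto c$ is injective, the fibre injects into $Z^1_{\mathrm{cont}}(Q, A)$, and the argument reduces to showing that this cocycle set is finite. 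The crucial point is that, since $\varphi_0(G) \subseteq \Phi$ has order prime to $p$, the action $Q \to \mathrm{Aut}(A)$ factors through a finite prime-to-$p$ quotient of $Q$; consequently its kernel $Q_0$ is an open subgroup of $Q$ of index prime to $p$, and $c|_{Q_0} \colon Q_0 \to A$ is an honest continuous homomorphism.

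Finally I would show that $\mathrm{Hom}_{\mathrm{cont}}(Q_0, A)$ is finite, whence $Z^1_{\mathrm{cont}}(Q, A)$ is finite as well, since any such cocycle is determined by its restriction to $Q_0$ together with its values on a (finite) set of coset representatives of $Q_0$ in $Q$, via the cocycle relation. For the finiteness of $\mathrm{Hom}_{\mathrm{cont}}(Q_0, A)$, the standard induction of homomorphisms provides an injection $\mathrm{Hom}_{\mathrm{cont}}(Q_0, A) \hookrightarrow \mathrm{Hom}_{\mathrm{cont}}(Q, A \wr (Q/Q_0))$ into homomorphisms from $Q$ to the wreath product; since $A$ and $Q/Q_0$ both have order prime to $p$, the group $A \wr (Q/Q_0)$ has order prime to $p$, so this last set is finite because $Q$ satisfies $(\mathrm{F}(p))$. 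I expect the main obstacle to be precisely the point flagged above: the property $(\mathrm{F}(p))$ is \emph{not} inherited by arbitrary open subgroups, and can genuinely fail for open subgroups of index divisible by $p$. Thus it is essential to verify that the relevant open subgroup $Q_0$ has index prime to $p$ — which in turn hinges on the prime-to-$p$ nature of the image $\varphi_0(G)$, together with the finite generation of $N^{(p)}$ used to reduce to finitely many $\psi$. Absent both of these inputs, the cocycle count breaks down, as one sees already for an extension of a group of type $(\mathrm{F}(p))$ by a non-finitely-generated pro-$p'$ group.
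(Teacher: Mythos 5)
Your proposal is correct, and it rests on the same two geometric/arithmetic inputs as the paper's proof: the exact sequence (\ref{E:Z1}) together with the finite generation of $\pi_1^{\text{\'et}}(X^s,\bar{x})^{(p)}$ (Orgogozo, Esnault--Shusterman--Srinivas), and condition $(\mathrm{F}(p))$ for $\Ga(k^{\mathrm{sep}}/k)$. Where you diverge is in how you establish the purely group-theoretic fact that $(\mathrm{F}(p))$ is stable under extensions of profinite groups. The paper isolates this as a lemma and proves it by passing to the maximal prime-to-$p$ quotient $\mathscr{G}^{(p)}$, fitting it into an extension of quotients of $\mathscr{E}^{(p)}$ and $\mathscr{H}^{(p)}$, and then invoking the known result that condition $(\mathrm{F})$ (smallness) is closed under extensions \cite[Lemma 2.7]{HH-small}. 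You instead reprove this stability directly by counting: finitely many possible restrictions $\varphi|_N$ because $N^{(p)}$ is small, and for each restriction a fibre that injects into $Z^1_{\mathrm{cont}}(Q, C_{\Phi}(\psi(N)))$, whose finiteness you obtain by restricting to the open normal subgroup $Q_0 = \ker(Q \to \mathrm{Aut}(A))$ of prime-to-$p$ index and using the Kaloujnine--Krasner embedding $\mathrm{Hom}_{\mathrm{cont}}(Q_0,A) \hookrightarrow \mathrm{Hom}_{\mathrm{cont}}(Q, A \wr (Q/Q_0))$. All the verifications implicit in your sketch check out: $c(g)$ does centralize $M_0$ by comparing $\varphi$ and $\varphi_0$ on $gng^{-1}$, $c$ does descend to a continuous cocycle on $Q$ because $M_0$ centralizes $A$, and the wreath product has order prime to $p$, so $(\mathrm{F}(p))$ for $Q$ applies. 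What your route buys is self-containedness (no appeal to \cite{HH-small}) and an explicit identification of the only place where the prime-to-$p$ hypothesis is genuinely load-bearing, namely that the relevant open subgroup $Q_0$ has index prime to $p$ --- a point the paper's citation-based argument leaves hidden inside the reference. What it costs is length: the paper's reduction to the $(\mathrm{F})$ case via maximal prime-to-$p$ quotients disposes of the lemma in a few lines.
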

The proof is similar to the proof of Proposition \ref{P:Z1}, so we will use the same notations and utilize the exact sequence (\ref{E:Z1}).
As we discussed above, it follows from \cite[Proposition 3.1]{Esn} that the maximal prime-to-$p$ quotient $\pi_1^{\text{\'et}}(X^s, \bar{x})^{(p)}$ is finitely generated, hence satisfies $(\mathrm{F})$. Thus, the fundamental group $\pi_1^{\text{\'et}}(\overline{X}, \bar{x})$ itself satisfies $(\mathrm{F}(p))$. Since the absolute Galois group $\mathrm{Gal}(k^{\mathrm{sep}}/k)$ satisfies $(\mathrm{F}(p))$ by our assumption, our claim is a consequence of the following.
\begin{lemma}
Let $1 \to \mathscr{E} \longrightarrow \mathscr{G} \stackrel{\alpha}{\longrightarrow} \mathscr{H} \to 1$ be an extension of profinite groups. If $\mathscr{E}$ and $\mathscr{H}$ satisfy $(\mathrm{F}(p))$, then so does $\mathscr{G}$.
\end{lemma}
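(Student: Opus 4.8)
The plan is to reduce the assertion to the already-known case $p = 1$ --- that smallness is stable under extensions, which is \cite[Lemma 2.7]{HH-small} --- by passing to maximal prime-to-$p$ quotients. Recall from the discussion preceding Theorem \ref{T:tori-GR} that a profinite group satisfies $(\mathrm{F}(p))$ if and only if its maximal prime-to-$p$ quotient satisfies $(\mathrm{F})$, i.e. is small. Thus it is enough to produce, out of the extension $1 \to \mathscr{E} \to \mathscr{G} \stackrel{\alpha}{\to} \mathscr{H} \to 1$, an extension of \emph{small} profinite groups whose middle term is $\mathscr{G}^{(p)}$, and then invoke the $p = 1$ result.

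First I would form the three maximal prime-to-$p$ quotients $\mathscr{E}^{(p)}$, $\mathscr{G}^{(p)}$, $\mathscr{H}^{(p)}$; by hypothesis and the reformulation, $\mathscr{E}^{(p)}$ and $\mathscr{H}^{(p)}$ are small, and the goal is to show $\mathscr{G}^{(p)}$ is small. Let $\bar{\mathscr{E}} \subseteq \mathscr{G}^{(p)}$ denote the image of $\mathscr{E}$ under the canonical surjection $\mathscr{G} \to \mathscr{G}^{(p)}$; since $\mathscr{E}$ is compact this image is closed, and since $\mathscr{E}$ is normal it is a closed normal subgroup, so we obtain an extension $1 \to \bar{\mathscr{E}} \to \mathscr{G}^{(p)} \to \mathscr{G}^{(p)}/\bar{\mathscr{E}} \to 1$. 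The two universal-property computations that drive the argument are: (i) the composite $\mathscr{E} \to \mathscr{G} \to \mathscr{G}^{(p)}$ has as target an inverse limit of finite groups of order prime to $p$, hence factors through the maximal prime-to-$p$ quotient $\mathscr{E}^{(p)}$, exhibiting $\bar{\mathscr{E}}$ as a continuous quotient of $\mathscr{E}^{(p)}$; and (ii) the surjection $\mathscr{G} \to \mathscr{G}^{(p)}/\bar{\mathscr{E}}$ kills $\mathscr{E}$ and therefore factors through $\mathscr{H} = \mathscr{G}/\mathscr{E}$, and as its target is again an inverse limit of finite prime-to-$p$ groups it factors further through $\mathscr{H}^{(p)}$, exhibiting $\mathscr{G}^{(p)}/\bar{\mathscr{E}}$ as a continuous quotient of $\mathscr{H}^{(p)}$.

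Next I would invoke the elementary fact that a continuous quotient of a small profinite group is small: if $q \colon G \to Q$ is a continuous surjection, then precomposition $\varphi \mapsto \varphi \circ q$ embeds $\mathrm{Hom}_{\mathrm{cont}}(Q, \Phi)$ into $\mathrm{Hom}_{\mathrm{cont}}(G, \Phi)$ for every finite $\Phi$. Combined with (i) and (ii), this shows that both $\bar{\mathscr{E}}$ and $\mathscr{G}^{(p)}/\bar{\mathscr{E}}$ are small. Applying \cite[Lemma 2.7]{HH-small} to the extension $1 \to \bar{\mathscr{E}} \to \mathscr{G}^{(p)} \to \mathscr{G}^{(p)}/\bar{\mathscr{E}} \to 1$ then yields that $\mathscr{G}^{(p)}$ is small, which is precisely the statement that $\mathscr{G}$ satisfies $(\mathrm{F}(p))$.

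The main point requiring care is the identification in step (ii): a priori one only knows that $\mathscr{G}^{(p)}/\bar{\mathscr{E}}$ is a pro-(prime-to-$p$) quotient of $\mathscr{H}$, and it is tempting --- but, I expect, an unnecessary detour --- to try to identify it with $\mathscr{H}^{(p)}$ on the nose. The two groups surject onto one another, yet for profinite groups mutual surjectivity need not force an isomorphism, so establishing an equality would require the right-exactness of the prime-to-$p$ completion functor. The observation that lets the argument go through cleanly is that I only need $\mathscr{G}^{(p)}/\bar{\mathscr{E}}$ to be \emph{small}, and for this it suffices that it be a continuous quotient of the small group $\mathscr{H}^{(p)}$; no exactness statement is actually invoked. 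As an alternative self-contained route one could avoid $\mathscr{G}^{(p)}$ altogether: for a finite group $\Phi$ of order prime to $p$, restriction to $\mathscr{E}$ sends $\mathrm{Hom}_{\mathrm{cont}}(\mathscr{G}, \Phi)$ into the finite set $\mathrm{Hom}_{\mathrm{cont}}(\mathscr{E}, \Phi)$, and one shows each fibre is finite by replacing a fixed kernel in $\mathscr{E}$ with the intersection of its finitely many $\mathscr{G}$-conjugates (finitely many by $(\mathrm{F}(p))$ for $\mathscr{E}$) to reduce to the case of a finite prime-to-$p$ kernel. The reduction via $\mathscr{G}^{(p)}$ is shorter, and I would present it as the main line.
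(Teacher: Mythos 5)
Your proof is correct and follows essentially the same route as the paper: both pass to the exact sequence $1 \to \mathscr{E}/(\mathscr{E}\cap\mathscr{N}) \to \mathscr{G}^{(p)} \to \mathscr{H}/\alpha(\mathscr{N}) \to 1$ (your $\bar{\mathscr{E}}$ and $\mathscr{G}^{(p)}/\bar{\mathscr{E}}$ are exactly these groups), observe that the outer terms are continuous quotients of $\mathscr{E}^{(p)}$ and $\mathscr{H}^{(p)}$ and hence small, and conclude by \cite[Lemma 2.7]{HH-small}. Your write-up merely spells out the universal-property verifications that the paper dismisses as ``easy to see,'' and correctly notes that no identification of $\mathscr{G}^{(p)}/\bar{\mathscr{E}}$ with $\mathscr{H}^{(p)}$ is needed.
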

\begin{proof}
Write the maximal prime-to-$p$ quotient $\mathscr{G}^{(p)}$ in the form $\mathscr{G}^{(p)} = \mathscr{G}/\mathscr{N}$, where $\mathscr{N}$ is the intersection of the kernels of all continuous homomorphisms $\varphi \colon \mathscr{G} \to \Phi$, with $\Phi$ a finite group of order prime to $p$. Then  $\mathscr{G}^{(p)}$ is contained in the exact sequence
$$
1 \to \mathscr{E}/ (\mathscr{E} \cap \mathscr{N}) \longrightarrow \mathscr{G}^{(p)} \longrightarrow \mathscr{H}/\alpha(\mathscr{N}) \to 1.
$$
It is easy to see that $\mathscr{E}/ (\mathscr{E} \cap \mathscr{N})$ and $\mathscr{H}/\alpha(\mathscr{N})$ are quotients of the maximal prime-to-$p$ quotients
$\mathscr{E}^{(p)}$ and $\mathscr{H}^{(p)}$, respectively. Since $\mathscr{E}$ and $\mathscr{H}$ satisfy $(\mathrm{F}(p))$, the groups $\mathscr{E}^{(p)}$ and $\mathscr{H}^{(p)}$ satisfy $(\mathrm{F})$, and therefore so do the groups $\mathscr{E}/ (\mathscr{E} \cap \mathscr{N})$ and $\mathscr{H}/\alpha(\mathscr{N})$.
Then by \cite[Lemma 2.7]{HH-small}, the group $\mathscr{G}^{(p)}$ also satisfies $(\mathrm{F})$, and therefore $\mathscr{G}$ satisfies $(\mathrm{F}(p))$.
\end{proof}

\section[Properness of the global-to-local map over function\\\mbox{} fields of algebraic varieties]{Properness of the global-to-local map over function fields of algebraic varieties}\label{S:local-global}


In this section, $K$ will denote the function field $k(X)$ of a normal geometrically integral variety $X$ over a field $k$. Let $V$ be the set of discrete valuations of $K$ associated with the prime divisors of $X$.

\vskip1mm

\noindent {\it Proof of Theorem \ref{T:4}.} Assuming that the base field $k$ has characteristic zero and is of type $(\mathrm{F})$, we will prove that for any algebraic $K$-torus $T$, the Tate-Shafarevich group
$$
\Sha^1(T,V) = \ker \left(H^1 (K,T) \to \prod_{v \in V} H^1(K_v, T) \right)
$$
is finite. One can give an argument based on the method developed in the second proof of \cite[Theorem 1.2]{RR-tori}.
To avoid repetition, however, we will describe a slightly different approach that, in the case of finitely generated fields, was suggested by Colliot-Th\'el\`ene. We begin by fixing some notations. Let $U \subset X$ be a smooth open affine subvariety such that $T$ extends to a torus $\mathbb{T}$ over $U$ and let $V_U$ be the set of discrete valuations of $K$ corresponding to the points of codimension 1 of $U$. For each such $x \in U$, let $v_x$ be the associated valuation of $K$. Denote by $\mathcal{O}_{x}$ the local ring of $U$ at $x$ and by $\hat{\mathcal{O}}_{v_x}$ its completion with respect to $v_x.$ Note that the fraction field of the latter is simply the completion $K_{v_x}$ of $K$ with respect to $v_x.$ According to \cite[Proposition 2.2]{CTS78}, the natural maps
$$
\he^1(\hat{\mathcal{O}}_{v_x}, \mathbb{T}) \to H^1(K_{v_x}, T)
$$
are injective for all $v_x \in V_U.$ Consider the global-to-local map
$$
\lambda_{T, V_U} \colon H^1(K,T) \to \prod_{v_x \in V_U} H^1(K_{v_x}, T).
$$
We then have the following general statement that immediately implies Theorem \ref{T:4}.

\begin{prop}\label{P-ToriSha}
The inverse image of $\displaystyle{\prod_{v_x \in V_U}} \he^1(\hat{\mathcal{O}}_{v_x}, \mathbb{T})$ in $H^1(K,T)$ under the map $\lambda_{T,V_U}$ is finite.
\end{prop}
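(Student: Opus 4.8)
The plan is to use a flasque resolution of $T$ to convert the problem into a finiteness statement for a relative, unramified cohomology group that can be controlled by the finite Galois group of the splitting field. After shrinking $U$ (this only removes finitely many valuations from $V_U$, hence can only enlarge the inverse image $W$ in question, so finiteness for the smaller set suffices), I may assume that $T$ admits a flasque resolution $1 \to S \to P \to T \to 1$, with $P$ quasi-trivial and $S$ flasque, that extends to an exact sequence of $U$-tori $1 \to \mathbb{S} \to \mathbb{P} \to \mathbb{T} \to 1$ with all three split by the finite étale cover $\mathfrak{U} \to U$ obtained by normalizing $U$ in the splitting field $K_T$ of $T$ (this cover is finite étale since $T$ has good reduction on $U$, so $K_T/K$ is unramified at every $v_x \in V_U$). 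As $H^1(K, P) = 0$, the connecting map $\partial \colon H^1(K, T) \hookrightarrow H^2(K, S)$ is injective, so it is enough to prove that $\partial(W)$ is finite.

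First I would translate the local conditions. Applying $\partial$ over each $\hat{\mathcal{O}}_{v_x}$ and using that $\he^2(\hat{\mathcal{O}}_{v_x}, \mathbb{S}) \to H^2(K_{v_x}, S)$ has image the unramified classes, the hypothesis $\alpha_{v_x} \in \he^1(\hat{\mathcal{O}}_{v_x}, \mathbb{T})$ becomes the condition that $\partial\alpha$ is unramified at every $v_x \in V_U$. Moreover, since $S$ splits over $K_T$ and $\alpha\vert_{K_T} \in H^1(K_T, T) = 0$ by Hilbert 90, every class in $\partial(W)$ restricts to $0$ over $K_T$; hence $\partial(W)$ lies in the relative subgroup $\ker(H^2(K,S) \to H^2(K_T, S))$, which is the image under inflation of $H^2(\Gamma, S(K_T))$, where $\Gamma = \Ga(K_T/K)$. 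Thus $\partial(W)$ is contained in the group of \emph{unramified relative classes} inside $H^2(\Gamma, S(K_T))$.

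The main step is to bound this group. Writing $S(K_T) = X_*(S) \otimes_{\mathbb{Z}} K_T^{\times}$ and using the exact sequence of $\Gamma$-modules
$$0 \to \mathcal{O}(\mathfrak{U})^{\times} \to K_T^{\times} \to \mathrm{Div}(\mathfrak{U}) \to \mathrm{Pic}(\mathfrak{U}) \to 0,$$
one identifies the residues at the $v_x$ with the induced map to $H^2(\Gamma, X_*(S) \otimes \mathrm{Div}(\mathfrak{U})) = \bigoplus_x H^2(\Gamma_{y_x}, X_*(S))$ (the last equality by Shapiro's lemma, as $\mathrm{Div}(\mathfrak{U})$ is a permutation $\Gamma$-module). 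A diagram chase through the two short exact sequences obtained from the displayed one then bounds the unramified relative classes by subquotients of $H^2(\Gamma, X_*(S) \otimes \mathcal{O}(\mathfrak{U})^{\times})$ and $H^1(\Gamma, X_*(S) \otimes \mathrm{Pic}(\mathfrak{U}))$.

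Finally I would check finiteness of these two groups. By Rosenlicht's theorem $\mathcal{O}(\mathfrak{U})^{\times}$ is finitely generated modulo the constants $k_{\mathfrak{U}}^{\times}$, and since $\Gamma$ is finite, its cohomology with finitely generated coefficients is finite, while the (possibly divisible) constant part contributes finitely because $H^i(\Gamma,-)$ is killed by $\vert\Gamma\vert$ and $k_{\mathfrak{U}}^{\times}[n]$ is finite. Likewise, although $\mathrm{Pic}(\mathfrak{U})$ may be infinite and divisible, the same torsion argument reduces $H^1(\Gamma, X_*(S)\otimes \mathrm{Pic}(\mathfrak{U}))$ to the finite groups $\mathrm{Pic}(\mathfrak{U})[n]$. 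It is precisely here that the hypotheses on $k$ enter: over a field of characteristic zero of type $(\mathrm{F})$, the étale cohomology of $\mathfrak{U}$ with finite coefficients is finite (combining finiteness of geometric étale cohomology with condition $(\mathrm{F})$ for $\Ga(k^{\mathrm{sep}}/k)$), which guarantees finiteness of the relevant torsion of $\mathrm{Pic}(\mathfrak{U})$ and of the roots of unity involved. The main obstacle is exactly this isolation of finite contributions: the ambient groups $H^2(K,S)$, $K_T^{\times}$, $\mathrm{Pic}(\mathfrak{U})$, and $\Br(\mathfrak{U})$ are all infinite, and the crux is that the unramified condition together with the flasque, split-by-$\mathfrak{U}$ structure forces every class of $\partial(W)$ to be detected by cohomology of the \emph{finite} group $\Gamma$ with coefficients whose relevant subquotients are finite.
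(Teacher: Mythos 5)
Your route is genuinely different from the paper's, which is very short: it applies Harder's lemma to pass from the hypothesis $\lambda_{T,V_U}(\xi)_{v_x} \in \mathrm{im}\, \he^1(\hat{\mathcal{O}}_{v_x},\mathbb{T})$ to $\xi \in \mathrm{im}\,\he^1(\mathcal{O}_x,\mathbb{T})$ for every codimension-one point $x$, invokes purity to conclude that $\xi$ comes from $\he^1(U,\mathbb{T})$, and then quotes \cite[Proposition 3.3]{CTGP} for the finiteness of the image of $\he^1(U,\mathbb{T}) \to H^1(K,T)$ over a field of type (F). You instead re-prove the substance of that last citation by hand via a flasque resolution and the divisor sequence of the splitting cover $\mathfrak{U}$. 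The skeleton of your reduction (injectivity of $\partial$, identification of $\ker(H^2(K,S)\to H^2(K_T,S))$ with $H^2(\Gamma, S(K_T))$ via Hilbert 90 for the split torus $S_{K_T}$, and the two-step diagram chase bounding the unramified classes by subquotients of $H^2(\Gamma, X_*(S)\otimes \mathcal{O}(\mathfrak{U})^{\times})$ and $H^1(\Gamma, X_*(S)\otimes \mathrm{Pic}(\mathfrak{U}))$) is sound, modulo the standard but nontrivial compatibility between the divisor-map residue and the condition of coming from $\he^2(\hat{\mathcal{O}}_{v_x},\mathbb{S})$.

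The genuine gap is in the final finiteness step. The principle you invoke --- ``$H^i(\Gamma,M)$ is killed by $n=\vert\Gamma\vert$ and $M[n]$ is finite, hence $H^i(\Gamma,M)$ is finite'' --- is false: for $M=\bigoplus_{j=1}^{\infty}\mathbb{Z}$ with trivial $\mathbb{Z}/2$-action one has $M[2]=0$ while $H^2(\mathbb{Z}/2,M)=\bigoplus_{j=1}^{\infty}\mathbb{Z}/2$ is infinite. The correct criterion requires \emph{both} $M[n]$ \emph{and} $M/nM$ to be finite, and verifying the latter is exactly where condition (F) must do real work. For the constant part $M=k_{\mathfrak{U}}^{\times}$ (which is not $n$-divisible in general, e.g.\ for $k=\mathbb{Q}_p$), you need $k_{\mathfrak{U}}^{\times}/(k_{\mathfrak{U}}^{\times})^{n}\simeq H^1(k_{\mathfrak{U}},\mu_n)$ finite, which does follow from type (F) via Kummer theory but is not what you argued; note that without this, your bound contains the relative Brauer group $H^2(\mathrm{Gal}(k_{\mathfrak{U}}/k),k_{\mathfrak{U}}^{\times})=\Br(k_{\mathfrak{U}}/k)$, whose finiteness is not formal. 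For $M=\mathrm{Pic}(\mathfrak{U})$ the situation is worse: finiteness of $\mathrm{Pic}(\mathfrak{U})[n]$ (which is all your appeal to $\he^1(\mathfrak{U},\mu_n)$ gives) says nothing about $H^1(\Gamma, X_*(S)\otimes\mathrm{Pic}(\mathfrak{U}))$, and you must separately establish finiteness of $\mathrm{Pic}(\mathfrak{U})/n\mathrm{Pic}(\mathfrak{U})$; the blanket claim that ``the étale cohomology of $\mathfrak{U}$ with finite coefficients is finite'' over a type-(F) field is also not a formal consequence of (F), since the Hochschild--Serre spectral sequence brings in $H^2(k,\mu_n)$, and condition (F) only directly controls $H^1$ with finite coefficients. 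These points can all be repaired (e.g.\ using Rosenlicht plus Kummer theory for the units, and divisibility of $\mathrm{Pic}^0$ over $\bar{k}$ plus a descent argument for the Picard group), but as written the crucial finiteness assertions are unjustified, and they are precisely the content that the paper outsources to \cite[Proposition 3.3]{CTGP}.
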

\begin{proof}
Set
$$
R = \lambda_{T, V_U}^{-1} \left(\prod_{v_x \in V_U} \he^1(\hat{\mathcal{O}}_{v_x}, \mathbb{T})\right) \subset H^1(K,T)
$$
and let $\xi \in R.$ First, according to \cite[Lemma 4.1.3]{Harder} (see also \cite[Lemma 4.1]{CTPS}), the fact that the $v_x$-component of $\lambda_{T, V_U}(\xi)$ is contained in the image of $\he^1(\hat{\mathcal{O}}_{v_x}, \mathbb{T}) \to H^1(K_{v_x}, T)$ implies that $\xi$ is contained in the image of the natural map $\he^1({\mathcal{O}}_{x}, \mathbb{T}) \to H^1(K, T)$, which is injective by \cite[Proposition 2.2]{CTS78}. Thus, $\xi$ lies in the image of $\he^1({\mathcal{O}}_{x}, \mathbb{T}) \to H^1(K, T)$ for all codimension 1 points $x \in U$, and hence by purity, $\xi$ is contained in the image of the natural map
$$
\he^1(U, \mathbb{T}) \to H^1(K,T)
$$
(see \cite[Proposition 4.1]{CTS78} and \cite[Corollaire 6.9]{CTS79}). On the other hand, since $k$ is of type (F), the image of the latter is finite by \cite[Proposition 3.3]{CTGP}. It follows that $R$ is finite, as claimed.
\end{proof}


Our next statement treats the question of the properness of the global-to-local map in the Galois cohomology of finite Galois $K$-modules in all characteristics. We let $p$ denote the characteristic exponent of $k$.
\begin{prop}\label{P:Finite2}
Let $K = k(X)$ be the function field of a geometrically integral normal variety $X$ defined over a field $k$, $V$ be the set of discrete valuations associated
with the prime divisors of $X$, and $\Omega$ be a finite (but not necessarily commutative) Galois module. Then in each of the following situations

\vskip2mm

{\rm (1)} $\dim X \geq 2$,

{\rm (2)} $X$ is a projective curve,

{\rm (3)} \parbox[t]{15cm}{$X$ is an arbitrary curve, but the order of $\Omega$ is prime to the characteristic exponent $p$ of $k$}

\vskip2mm

\noindent the map
$$
H^1(K , \Omega) \stackrel{\kappa}{\longrightarrow} \prod_{v \in V} H^1(K_v , \Omega)
$$
is proper.
\end{prop}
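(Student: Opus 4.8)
The plan is to follow the strategy of the proof of Proposition \ref{P:1}. By twisting (cf. \cite[Ch. I, 5.3]{Serre-GC}), properness of $\kappa$ follows once we show that $\ker \kappa$ is finite for every finite Galois module $\Omega$: the nonempty fibers of $\kappa$ are, after twisting by a representative $x_0$ of a chosen preimage, in bijection with the kernel of the analogous map for the twisted module ${}_{x_0}\Omega$, which is again a finite Galois module with the same underlying group — so, in particular, of order prime to $p$ in situation (3). Fixing such an $\Omega$, I let $M/K$ be the (finite separable Galois) splitting field of $\Omega$, so that $\Omega$ becomes a trivial Galois module over $M$, and I use the inflation--restriction sequence
$$
1 \to H^1(M/K, \Omega) \longrightarrow H^1(K, \Omega) \stackrel{\mathrm{res}}{\longrightarrow} H^1(M, \Omega),
$$
whose kernel term $H^1(M/K, \Omega)$ is finite. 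Since local restriction is compatible with $\mathrm{res}$, the map $\mathrm{res}$ carries $\ker \kappa$ into $\ker \kappa_M$, where $\kappa_M \colon H^1(M, \Omega) \to \prod_{w \in V^M} H^1(M_w, \Omega)$ and $V^M$ is the set of extensions of $V$ to $M$. Thus it suffices to control $\ker \kappa_M$ for the trivial module $\Omega$.

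Next I would record the geometric bookkeeping. Let $Y$ be the normalization of $X$ in $M$ and let $k'$ be the algebraic closure of $k$ in $M$. Since $X$ is geometrically integral, $k$ is algebraically closed in $K$, from which one checks that $k'/k$ is finite separable (hence again of type $(\mathrm{F})$, resp. satisfying $(\mathrm{F}(p))$, as these pass to finite extensions), that $Y$ is normal and geometrically integral over $k'$ with $\dim Y = \dim X$, and that $V^M$ coincides with the set of valuations attached to the prime divisors of $Y$ (a finite morphism contracts no divisor to the generic point). In situation (2), $Y$ is moreover complete over $k'$; in situation (3), since $Y$ is geometrically reduced over $k'$, its $k'$-smooth locus contains a dense smooth affine open $Y^{\circ}$.

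For the trivial module, $\ker \kappa_M$ is represented by continuous homomorphisms $\chi \colon \mathrm{Gal}(M^{\mathrm{sep}}/M) \to \Omega$ vanishing on every decomposition group $\mathscr{G}(w)$, $w \in V^M$; in particular each such $\chi$ is unramified at all $w \in V^M$ and factors through $\mathrm{Gal}(M_{V^M}/M)$. In situation (1), where $\dim Y \geq 2$, vanishing on decomposition groups means that the fixed field $N$ of $\ker \chi$ satisfies $N_{w'} = M_w$ for all $w$, so $N = M$ by Proposition \ref{P:FFF3}; hence $\ker \kappa_M$ is trivial, and by exactness $\ker \kappa \subset \mathrm{im}(H^1(M/K, \Omega))$ is finite. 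In situation (2), $Y$ is a complete normal geometrically integral curve over the type-$(\mathrm{F})$ field $k'$, so $\pi_1^{\text{\'et}}(Y)$ is small by Proposition \ref{P:Z1}, and Proposition \ref{P-HigherHM} gives that $\mathrm{Gal}(M_{V^M}/M)$ is small; in situation (3), Proposition \ref{P:Z2} applied to $Y^{\circ}$ shows $\pi_1^{\text{\'et}}(Y^{\circ})$ satisfies $(\mathrm{F}(p))$, and since the $\chi$ factor through $\mathrm{Gal}(M_{V^M(Y^{\circ})}/M)$ (as $V^M(Y^{\circ}) \subseteq V^M$), Proposition \ref{P-HigherHM} gives that this group satisfies $(\mathrm{F}(p))$. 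As $\Omega$ is finite (of order prime to $p$ in case (3)), there are only finitely many such $\chi$, so $\ker \kappa_M$ is finite.

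To conclude in the curve cases, I would use that every fiber of $\mathrm{res}$ is finite: over $y = \mathrm{res}(x_0)$, twisting by $x_0$ identifies the fiber with the image of $H^1(M/K, {}_{x_0}\Omega)$, which is finite because $\mathrm{Gal}(M/K)$ and $\Omega$ are finite (cf. \cite[Ch. I, \S 5.5]{Serre-GC}). Since $\ker \kappa$ lies in $\mathrm{res}^{-1}(\ker \kappa_M)$, the preimage of a finite set under a map with finite fibers, it is finite. The main obstacle is precisely the bookkeeping in the curve cases: verifying that the constant field $k'$ and the curve $Y$ (resp.\ a smooth affine model $Y^{\circ}$) retain the hypotheses of Propositions \ref{P:Z1}/\ref{P:Z2} over $k'$, and correctly combining the inflation--restriction fibers with the smallness of $\mathrm{Gal}(M_{V^M}/M)$ for a possibly nonabelian $\Omega$; by contrast, situation (1) reduces directly to Proposition \ref{P:FFF3} exactly as in the proof of Proposition \ref{P:1}, yielding outright triviality of $\ker \kappa_M$.
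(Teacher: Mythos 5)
Your case (1) is correct and is exactly the paper's argument (reduce by twisting and inflation--restriction to a trivial module, then invoke Proposition \ref{P:FFF3} to get triviality of the kernel over $M$). The gap is in cases (2) and (3): you assert that $k'$ is ``again of type $(\mathrm{F})$, resp.\ satisfying $(\mathrm{F}(p))$, as these pass to finite extensions,'' but Proposition \ref{P:Finite2} imposes \emph{no} hypothesis whatsoever on the base field $k$ --- indeed, the point advertised in the introduction is that the result holds ``without any assumptions on the base field.'' Propositions \ref{P:Z1} and \ref{P:Z2} therefore do not apply, and your strategy of bounding $\ker\kappa_M$ by counting \emph{all} continuous homomorphisms $\mathrm{Gal}(M_{V^M}/M)\to\Omega$ cannot work in general: for $k=\mathbb{Q}$, $X=\mathbb{P}^1_{\mathbb{Q}}$, $\Omega=\mathbb{Z}/2\mathbb{Z}$, the constant-field extensions $K(\sqrt{a})$, $a\in\mathbb{Q}^{\times}$, give infinitely many everywhere-unramified degree-$2$ covers, so $\mathrm{Gal}(K_V/K)$ is not small --- yet $\ker\kappa$ is trivial there because $K(\sqrt{a})_w=K_v$ at a place with residue field $\mathbb{Q}$ forces $a$ to be a square. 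In other words, you only use that elements of $\ker\kappa_M$ are unramified, whereas the statement genuinely requires the stronger input that they kill every decomposition group.

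The paper's proof of case (2) handles this by separating the geometric and arithmetic parts of the cover. Only the \emph{geometric} fundamental group $\pi_1^{\text{\'et}}(X^s)$ is topologically finitely generated (this needs no hypothesis on $k$), which yields a single finite Galois extension $M/K$ such that every finite Galois $L/K$ unramified at all $v\in V$ with group inside $\Omega$ satisfies $ML=M\ell$ for some finite separable constant extension $\ell/k$. The extension $\ell$ is then pinned down not by a smallness property of $\mathrm{Gal}(k^{\mathrm{sep}}/k)$ but by the local condition itself: fixing one place $v_0$ and $w_0\,\vert\,v_0$ on $M$, the equality $L_w=K_v$ forces $(ML)_{u_0}=M_{w_0}$, hence $\ell$ embeds into the fixed finite extension $E$ of $k$ inside the residue field $M^{(w_0)}$, so $L\subset ME$. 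Case (3) is the same argument using finite generation of the prime-to-$p$ part of $\pi_1^{\text{\'et}}(X^s)$ for an arbitrary curve. Your write-up would become a correct proof of a strictly weaker statement (with $k$ of type $(\mathrm{F})$, resp.\ $(\mathrm{F}(p))$, added as a hypothesis), but as written it does not prove the proposition.
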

\begin{proof}
As in the proof of Proposition \ref{P:1}, by using twisting, it is enough to show that $\ker \kappa$ is finite, and then one can assume (which we will) without loss of generality that the Galois action on $\Omega$ is trivial. Under this assumption, one actually proves that in case (1), the kernel $\ker \kappa$ is trivial --- this is derived from Proposition \ref{P:FFF3} by repeating verbatim the argument used to derive the corresponding statement in the proof of Proposition \ref{P:1} from Proposition \ref{P:2}.

\vskip1mm

(2): Let $K_V$ that be the compositum of all finite Galois extensions contained in $K^{\mathrm{sep}}$ that are unramified at all $v \in V$. By Lemma \ref{L-ZNPurity}, the field $K_V$ coincides with the compositum $K_X$ of all finite subextensions $L$ of $K^{\mathrm{sep}}$ such that the normalization of $X$ in $L$ is etale over $X$. Since $X$ is projective, as we noted in the proof of Proposition \ref{P:Z1}, the \'etale fundamental group $\mathscr{H} = \pi_1^{\text{\'et}}(X^s)$  of $X^s = X \times_{\mathrm{Spec}\: k} \mathrm{Spec}\: k^{\mathrm{sep}}$
is topologically finitely generated, hence satisfies condition $(\mathrm{F})$. Let $\widetilde{\mathscr{H}}$ be the absolute Galois group of $k^{\mathrm{sep}}(X)$, and let $\widetilde{M}$ be the fixed subfield for the intersection of the kernels of all continuous homomorphisms $\tilde{\chi} \colon \widetilde{\mathscr{H}} \to \Omega$ of the form $\tilde{\chi} = \chi \circ \nu$, where $\nu \colon \widetilde{\mathscr{H}} \to \mathscr{H}$ is the canonical epimorphism and $\chi \colon \mathscr{H} \to \Omega$ is a continuous homomorphism. Since the set $\mathrm{Hom}_{\mathrm{cont}}(\mathscr{H} , \Omega)$ is finite, $\widetilde{M}$ is a finite Galois extension of $k^{\mathrm{sep}}(X)$ having the following property: if $L/K$ is a finite Galois extension that is unramified at all $v \in V$ and the Galois group of which is isomorphic to a subgroup of $\Omega$, then $L \subset \widetilde{M}$. We pick a finite Galois extension $M$ of $K$ contained in $\widetilde{M}$ so that $\widetilde{M} = M \cdot k^{\mathrm{sep}}$. We then fix $v_0 \in V$ and its extension $w_0$ to $M$, and let $E$ denote the maximal separable extension of $k$ contained in the residue field $M^{(w_0)}$.

Now let $\chi \colon \mathscr{G} \to \Omega$ be a continuous homomorphism of the absolute Galois group $\mathscr{G} = \mathrm{Gal}(K^{\mathrm{sep}}/K)$ that lies in $\ker \kappa$,  and let $L_{\chi} $ be the fixed field of $\ker \chi$. To prove the finiteness of $\ker \kappa$, it is enough to show that for any such $\chi$, the field $L_{\chi}$ is contained in the finite extension $ME$ of $K$. In any case, $L = L_{\chi}$ is a finite Galois extension of $K$ whose Galois group is isomorphic to a subgroup of $\Omega$ and which satisfies $L_w = K_v$ for all $v \in V$ and $w \vert v$. In particular, $L/K$ is unramified at all $v \in V$, and hence $L \subset \widetilde{M}$. Because of the canonical isomorphism of Galois groups $$\mathrm{Gal}(\widetilde{M}/M) \simeq \mathrm{Gal}(k^{\mathrm{sep}}/(k^{\mathrm{sep}} \cap M)),$$ we conclude that there is a finite separable extension $\ell/k$ such that $F=ML$ coincides with  $M\ell$. Let $u_0$ be the extension of $w_0$ to $F$. Since the completion of $L$ with respect to the restriction of $u_0$ coincides with $K_{v_0}$, we see that $F_{u_0} = M_{w_0}$, implying the equality $F^{(u_0)} = M^{(w_0)}$ of the residue fields. On the other hand, $\ell \subset F^{(u_0)}$ hence $\ell \subset E$. Thus, $L \subset ML = M\ell \subset ME$, as required.

\vskip1mm

(3): Since in this case $\Omega$ is assumed to be of order prime to $p$, for the argument used to treat case (2) to work in this situation one only needs
to make sure that the prime-to-$p$ part $\pi^{\text{\'et}}_1(X^s)^{(p)}$ of the fundamental group is finitely generated, which was known for any  curve $X$  (see \cite[Exp. XIII, Cor. 2.12]{SGA1}) even before \cite{Esn}.
\end{proof}


\noindent {\bf Remark 6.3.} In the case where $X$ is an affine curve over a field $k$ of characteristic $p > 0$ and $\Omega$ is a finite Galois module of order divisible by $p$ over the function field $K = k(X)$, which is excluded in the proposition, the finiteness assertion may be false. Indeed, let $k$ be an algebraically closed field of characteristic $p$ and $X = \mathbb{A}^1_k$, and let  $\Omega = \mathbb{Z}/p\mathbb{Z}$ be the trivial Galois module over the field $K = k(X)$ of rational functions. As above, we let $V$ denote the set of places of $K$ corresponding to the closed points of $X$. If $L/K$ is a Galois extension of degree $p$ corresponding to some Artin-Schreier cover $Y \to X$, then any $v \in V$ is unramified in $L$, implying that $L_w = K_v$ for $w \vert v$ since the residue
field $K^{(v)} = k$ is algebraically closed. This means that a character $\chi \colon \mathscr{G} \to \Omega$ of the absolute Galois group $\mathscr{G} = \mathrm{Gal}(K^{\mathrm{sep}}/K)$ whose kernel is the subgroup corresponding to $L$ lies in the kernel of the global-to-local map $\kappa \colon H^1(K , \Omega) \to \prod_{v \in V} H^1(K_v , \Omega)$. Since $X$ has infinitely many distinct Artin-Schreier covers, $\kappa$ is {\it not} proper in this case.

\vskip3mm

\addtocounter{thm}{1}

With $K$ and $V$ as in Proposition \ref{P:Finite2}, we have the following statement.

\begin{prop}\label{P:Finite3}
Let $G$ be a connected reductive $K$-group. Fix a maximal $K$-torus $T$ of $K$ and let $\mathscr{C}(T)$ be the set of all maximal $K$-tori $T'$ of $G$ such that $T$ and $T'$ are $G(K_v)$-conjugate for all $v \in V$. Then, with the exception of the following case

\vskip2mm

\noindent $(*)$ \parbox[t]{15cm}{$X$ is an affine curve and the order of the Weyl group $W(G , T)$ is divisible by $p$,}

\vskip2mm

\noindent $\mathscr{C}(T)$ consists of finitely many $K$-isomorphism classes.
\end{prop}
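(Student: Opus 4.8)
The plan is to run the argument of Theorem \ref{T:2A}, but to retain only the information carried by the Weyl group, which is exactly what pins down the $K$-isomorphism class of a torus. I would set $N = N_G(T)$, let $W = W(G,T) = N/T$ be the (finite) Weyl group viewed as a finite, possibly noncommutative, Galois $K$-module via the conjugation action of $\mathrm{Gal}(K^{\mathrm{sep}}/K)$ on $N_G(T)/T$, and recall from the proof of Theorem \ref{T:2A} the map $\gamma_K \colon \mathscr{T}(K) \to H^1(K,N)$, where $\mathscr{T} = G/N$ is the variety of maximal tori and the fiber of $\gamma_{K_v}$ over the trivial class is the $G(K_v)$-conjugacy class of $T$. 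A torus $T' \in \mathscr{C}(T)$ is $G(K_v)$-conjugate to $T$ for every $v \in V$, which says precisely that the restriction of $\gamma_K(T')$ to $H^1(K_v,N)$ is trivial for all $v$; hence
$$
\gamma_K(\mathscr{C}(T)) \subset \ker\left( H^1(K,N) \to \prod_{v \in V} H^1(K_v,N) \right).
$$

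The key step I would isolate is that the $K$-isomorphism class of $T' \in \mathscr{C}(T)$, as an algebraic $K$-torus, depends only on the image of $\gamma_K(T')$ under the map $H^1(K,N) \to H^1(K,W)$ induced by $N \to N/T = W$. Indeed, the conjugation action $N \to \mathrm{Aut}(T)$ is trivial on $T$, so it factors through $W$; and since the $K$-forms of the torus $T$ are classified by $H^1(K,\mathrm{Aut}(T))$, with $T'$ corresponding to the image of $\gamma_K(T')$, this classifying map factors through $H^1(K,W)$. Consequently it would suffice to show that the image of $\gamma_K(\mathscr{C}(T))$ in $H^1(K,W)$ is finite.

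By functoriality of the global-to-local maps, this image is contained in the kernel of
$$
\kappa \colon H^1(K,W) \to \prod_{v \in V} H^1(K_v,W)
$$
for the finite Galois module $\Omega = W$. At this point I would invoke Proposition \ref{P:Finite2} with $\Omega = W$: it applies in each of the cases $\dim X \geq 2$, $X$ a projective curve, and $X$ an arbitrary (hence affine) curve with $|W| = |W(G,T)|$ prime to $p$ — that is, in exactly the situations not covered by $(*)$ — and yields the finiteness of $\ker \kappa$. The finiteness of the image of $\gamma_K(\mathscr{C}(T))$ in $H^1(K,W)$, and therefore of the number of $K$-isomorphism classes in $\mathscr{C}(T)$, then follows, the matching of the cases of Proposition \ref{P:Finite2} to the exception $(*)$ being routine once one identifies $|\Omega|$ with $|W(G,T)|$.

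The point requiring the most care is the reduction in the second paragraph, namely verifying that the $K$-isomorphism type of $T'$ is governed entirely by the $W$-valued part of its defining cocycle. This is precisely why the conclusion concerns $K$-isomorphism classes rather than $G(K)$-conjugacy classes as in Theorem \ref{T:2A}: the $T$-part of a class in $\ker(H^1(K,N) \to \prod_{v} H^1(K_v,N))$, which is what separates conjugacy classes inside a fixed isomorphism type, is not controlled here, since over an arbitrary $k(X)$ we have no analog of the Tate--Shafarevich finiteness of Theorem \ref{T:4} available for the relevant twisted tori.
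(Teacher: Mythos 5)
Your proposal is correct and follows essentially the same route as the paper: both reduce the problem to showing that the image of $\gamma_K(\mathscr{C}(T))$ under $H^1(K,N)\to H^1(K,W)$ lies in $\ker\kappa$ for the finite Galois module $W=N/T$, and then invoke Proposition \ref{P:Finite2}, whose three cases match the complement of $(*)$. The only (minor) difference is how the key reduction is justified --- you note that the classifying map for forms of $T$ via $H^1(K,\mathrm{Aut}\:T)$ factors through $H^1(K,W)$, whereas the paper uses the exact sequence $H^1(F,T)\to H^1(F,N)\to H^1(F,W)$ together with twisting to produce the isomorphism $x\mapsto gxg^{-1}$ explicitly.
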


\begin{proof}
We will freely use the notations introduced in the proof of Theorem \ref{T:2A} --- see \S\ref{S:Proofs}. We will identify the Weyl group $W = W(G , T)$ with the quotient $N/T$, and for a field extension $F/K$ consider the natural map $\delta_F \colon H^1(F , N) \to H^1(F , W)$. It is well-known that if $T_1 , T_2 \in \mathscr{T}(F)$ are such that
\begin{equation}\label{E:equal}
\delta_F(\gamma_F(T_1)) = \delta_F(\gamma_F(T_2)),
\end{equation}
then $T_1$ and $T_2$ are $F$-isomorphic. Indeed, using twisting, one easily reduces the argument to the case where $T_1 = T$. To be consistent with the notations used in the proof of Theorem \ref{T:2A}, set $T' = T_2$, pick $g \in G(F^{\mathrm{sep}})$ such that $T' = g T g^{-1}$, and define $\xi(\sigma) = g^{-1} \sigma(g)$ for $\sigma \in \mathrm{Gal}(F^{\mathrm{sep}}/F)$. Then $\gamma_F(T')$ is the class of the cocycle $\xi = \{ \xi(\sigma) \}$ in $H^1(F , N)$. Condition (\ref{E:equal}) yields
that $\delta_F(\xi)$ is trivial, so the exact sequence
$$
H^1(F , T) \longrightarrow H^1(F , N) \stackrel{\delta_F}{\longrightarrow} H^1(F , W)
$$
tells us that $\xi$ is equivalent to a cocycle with values in $T(F^{\mathrm{sep}})$. This means that the element $g$ can be chosen so that $\xi(\sigma) = g^{-1} \sigma(g)$ belongs to $T(F^{\mathrm{sep}})$ for all $\sigma \in \mathrm{Gal}(F^{\mathrm{sep}}/F)$. Using the fact that $T$ is commutative, one can easily see that the isomorphism $T \to T'$, $x \mapsto g x g^{-1}$, is defined over $F$.

Having recalled these facts, we are now ready to complete the proof of the proposition. It is enough to prove that the set $\delta_K(\gamma_K(\mathscr{C}(T)))$ is finite. By construction, $\gamma_{K_v}(\mathscr{C}(T))$ consists only of the trivial class for any $v \in V$. So, using the commutative diagram
$$
\xymatrix{H^1(K,N) \ar[rr]^{\delta_K} \ar[d] & & H^1(K, W) \ar[d]^{\kappa} \\ \displaystyle{\prod_{v \in V}} H^1(K_v, N) \ar[rr]^{\prod \delta_{K_v}} & & \displaystyle{\prod_{v \in V}} H^1(K_v, W)}
$$
we see that $\delta_K(\gamma_K(\mathscr{C}(T))) \subset \ker \kappa$. Since we exclude the case $(*)$, the finiteness of $\ker \kappa$ follows from Proposition \ref{P:Finite2}, completing the argument.
\end{proof}

\vskip1mm

\noindent {\bf Remark 6.5.} As in Remark 6.3, suppose $k$ is an algebraically closed field of characteristic $p > 0$, and let $K = k(X)$ be the function field of $X = \mathbb{A}^1_k$ and $V$ be the set of discrete valuations of $K$ associated with the closed points of $X$. We have seen that $K$ has an infinite family $L_1, L_2, \ldots $ of degree $p$ cyclic extensions corresponding to the Artin-Schreier covers of $X$, and that for every $i$, we have $L_{i w} = K_v$ for all $v \in V$ and $w \vert v$. Then the corresponding maximal tori $T_i = \mathrm{R}_{L_i/K}(\mathbb{G}_m)$ of $G = \mathrm{GL}_p$ are pairwise non-isomorphic over $K$ (since they have different splitting fields). On the other hand, for any $v \in V$, all of them become split over $K_v$, hence are $G(K_v)$-conjugate. This demonstrates that the case $(*)$ in Proposition \ref{P:Finite3} is an honest exception.

\section[Simple groups with good reduction over the function\\\mbox{} fields of complex surfaces]{Simple groups with good reduction over the function fields of complex surfaces}\label{S:CSurf}

The goal of this section is to prove Theorem \ref{T:CSurf}.
The key ingredient in the proof is the following fundamental result on Serre's Conjecture II (cf. \cite{BP}, \cite[Theorem 1.2]{CTGP}, \cite{Gille}, \cite[Theorem 1.6]{dJHS}).
\begin{thm}\label{T-SerreSurf}
Let $k$ be an algebraically closed field of characteristic 0 and $K = k(S)$ be the function field of a surface $S$ over $k$. Then $H^1(K, G) = 1$ for any absolutely almost simple simply connected $K$-group $G$.
\end{thm}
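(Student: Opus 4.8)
The plan is to recognize the statement as Serre's Conjecture II for the field $K = k(S)$. Since $k$ is algebraically closed of characteristic zero and $\mathrm{trdeg}_k K = 2$, one has $\mathrm{cd}(K) = 2$, and the assertion $H^1(K, G) = 1$ for every absolutely almost simple simply connected $G$ is precisely the Conjecture II prediction in this geometric setting. I would organize the proof around the uniform geometric method of de Jong--He--Starr, using the more explicit invariant-theoretic arguments for the classical and low-rank exceptional types as a cross-check.

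\emph{The geometric core.} Given $\xi \in H^1(K, G)$, represent it by a $G$-torsor $E$ over $\mathrm{Spec}\, K$; the goal is to produce a $K$-rational point of $E$. First I would spread out: after choosing a smooth model, $G$ extends to a reductive group scheme $\mathcal{G}$ over a nonempty open $U \subseteq S$ and $E$ extends to a $\mathcal{G}$-torsor over $U$, and shrinking $U$ we may assume it smooth and affine. The next step is to convert the search for a $K$-point into the search for a rational section of a projective fibration $\pi \colon \mathcal{Y} \to U$ whose geometric generic fibre is rationally simply connected; concretely one passes to a twisted projective homogeneous space attached to the torsor, using that the relevant (twisted) flag varieties of a semisimple group are geometrically rationally simply connected. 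One then invokes the section theorem over a two-dimensional base --- the surface analogue of Graber--Harris--Starr established by de Jong--He--Starr: a projective fibration over a smooth surface over an algebraically closed field whose generic fibre is rationally simply connected, with vanishing elementary obstruction and suitable Picard hypotheses, admits a rational section. Such a section gives a $K$-point of $E$, hence $\xi$ is trivial.

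\emph{The classification route.} For a more hands-on argument I would proceed by type. The crucial input is de Jong's theorem that period equals index in $\Br(K)$ over the function field of a surface; combined with $\mathrm{cd}(K) = 2$ this settles inner type $A_n$, where the groups are of the form $\mathrm{SL}_1(D)$ for a central division algebra $D$, and reduces the outer type ${}^2\!A_n$ to Hermitian forms. Types $B_n$, $C_n$ and non-trialitarian $D_n$ are handled through quadratic and Hermitian form theory over fields of cohomological dimension $\le 2$ (triviality of the discriminant and of the Clifford/Hasse--Witt and Arason-type invariants), following Bayer-Fluckiger--Parimala. The groups of type $G_2$ and $F_4$ fall to the cohomological-invariant analysis of Colliot-Th\'el\`ene--Gille--Parimala, the trialitarian forms ${}^3\!D_4$ and ${}^6\!D_4$ to Gille, and the large exceptional types $E_6, E_7, E_8$ to the geometric method above, which is the only presently known uniform route for them.

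\emph{The main obstacle.} The genuinely deep part, which the invariant-theoretic approach cannot reach for the exceptional groups, consists of two geometric inputs: first, that the twisted projective homogeneous spaces attached to $G$ are geometrically rationally simply connected --- a delicate analysis of the spaces of rational curves on these varieties and of their irreducibility; and second, the existence of rational sections for rationally simply connected fibrations over a surface. Both lie at the technical heart of de Jong--He--Starr, and I would cite them rather than reprove them. Everything else is bookkeeping over $K$: the spreading-out construction, together with the standard exact sequences relating $H^1(K,G)$ to the cohomology of a maximal torus and to $\Br(K)$, so that once a rational point is produced on the generic fibre the triviality of $\xi$ follows formally.
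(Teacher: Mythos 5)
Your proposal is correct and matches the paper's treatment: the paper does not prove Theorem \ref{T-SerreSurf} at all, but quotes it as a known instance of Serre's Conjecture II with citations to exactly the works you invoke (Bayer-Fluckiger--Parimala, Colliot-Th\'el\`ene--Gille--Parimala, Gille, and de Jong--He--Starr \cite[Theorem 1.6]{dJHS}). Your sketch --- the de Jong--He--Starr rationally-simply-connected-fibration method as the uniform core, with the cohomological-invariant/period-index route for classical and low-rank exceptional types --- is a faithful outline of how those cited sources combine, so there is nothing to add beyond noting that the deep inputs are, as you say, cited rather than reproved.
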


Let $\mathcal{G}$ be an absolutely almost simple simply connected group over a field $\mathcal{K}$, and let $\mathcal{L}$ be the minimal Galois extension of $\mathcal{K}$ over which $\mathcal{G}$ becomes an inner form of a split group. It is well-known that the degree $[\mathcal{L} : \mathcal{K}]$ can only be $1$, $2$, $3$, or $6$. Furthermore, if $\mathcal{G}$ has good reduction at a discrete valuation $v$ of $\mathcal{K}$, then the extension $\mathcal{L}/\mathcal{K}$ is unramified at $v$. Thus, in the situation of Theorem \ref{T:CSurf}, if $G'$ is a $K$-form of $G$ that has good reduction at all $v \in V$, then the corresponding minimal Galois extension $L'$ of $K$ over which $G'$ becomes an inner form of a split group has degree $1$, $2$, $3$, or $6$ and is unramified at all $v \in V$. It follows that $L'$ corresponds to an \'etale cover of $S$ (cf. Lemma \ref{L-ZNPurity}). Since the \'etale fundamental group of $S$ is topologically finitely generated (see \cite[Exp. X, Th. 2.9]{SGA1}), there are only finitely many possibilities $L_1, \ldots , L_r$ for the field $L'$  as $G'$ runs through all possible $K$-forms of $G$ having good reduction at all $v \in V$. Let $G^{(i)}$ be the quasi-split form of $G$ associated with $L_i$. Then any $K$-form $G'$ of $G$ with good reduction at all $v \in V$ is an {\it inner} form of one of the $G^{(i)}$'s. Thus, in order to prove Theorem \ref{T:CSurf}, it is enough to establish the following:

\vskip2mm

\noindent $(*)$ \parbox[t]{15cm}{{\it if $G$ is an absolutely almost simple simply connected quasi-split $K$-group, then the set of $K$-isomorphism classes of \underline{inner} forms of $G$ that have good reduction at all $v \in V$ is finite.}}

\vskip2mm

\noindent To prove this statement, we consider the exact sequence
$$
1 \to F \to G \to \overline{G} \to 1,
$$
where $F$ is the center of $G$ and $\overline{G}$ is the corresponding adjoint group, which gives rise to a coboundary map
$$
\theta_K \colon H^1(K, \overline{G}) \to H^2(K,F).
$$
Then Theorem \ref{T-SerreSurf}, in conjunction with a standard argument based on twisting  (cf. \cite[\S1.3.2]{Pl-R}, \cite[Ch. I, \S5]{Serre-GC}), shows that $\theta_K$ is injective. Now, let $G'$ be an inner form of $G$ that has good reduction at all $v \in V$. Then $G'$ is obtained from $G$ by twisting by a cocycle with values in the group of inner automorphisms of $G$, which we identify with $\overline{G}$. Let $\xi \in H^1(K, \overline{G})$ be the corresponding  cohomology class. For any $v \in V$, the residue field $K^{(v)}$ is the function field of a complex curve, hence has cohomological dimension $\leq 1$
by Tsen's theorem (cf. \cite[Ch. II, 3.3]{Serre-GC}). By our assumption, the reduction ${G'}^{(v)}$ is a connected reductive (in fact, absolutely almost simple) group. Applying Steinberg's theorem (see \cite[Ch. III, \S2.3, Theorem 1$'$]{Serre-GC}), we conclude that  the reduction $\underline{G'}^{(v)}$ of $G'$ at $v$ is quasi-split. Then, by Hensel's lemma, $G'$ is quasi-split over $K_v$. This being true for all $v \in V$, we see that $\xi$ lies in the kernel of the global-to-local map
$$
H^1(K, \overline{G}) \stackrel{\lambda_{\overline{G}, V}}{\longrightarrow} \prod_{v \in V} H^1(K_v, \overline{G}).
$$
So, it remains to be establish the finiteness of this kernel. For this we will use the commutative diagram
$$
\xymatrix{H^1(K, \overline{G}) \ar[r]^{\lambda_{\overline{G}, V}} \ar[d]_{\theta_K} & \displaystyle{\prod_{v \in V}} H^1(K_v, \overline{G}) \ar[d]^{\theta_{K_v}} \\ H^2(K, F) \ar[r]^{\lambda^2_{F,V}} & \displaystyle{\prod_{v \in V}} H^2(K_v, F)}.
$$
Clearly,
$$
\theta_K(\ker \lambda_{\overline{G}, V}) \subset \ker \lambda^2_{F, V}.
$$
As we remarked above, the map $\theta_K$ is injective, so it is enough to prove the finiteness of $\ker \lambda^2_{F, V}$. Replacing $S$ by an open subvariety if necessary, we can assume that $F$ extends to a group scheme $\mathbb{F}$ over $S$. Then a consequence of purity for smooth varieties over fields is that $\ker \lambda^2_{F,V}$ is contained in the image of the natural map $\he^2(S, \mathbb{F}) \to H^2(K, F)$ (see \cite[\S3.4]{CT-SB}). Since $k$ is algebraically closed, the group $\he^2(S, \mathbb{F})$ is finite according to \cite[Expos\'e XVI, Th\'eor\`eme 5.2]{SGA4}. (We note that a similar argument was used in the proof of \cite[Proposition 3.3]{CTGP}).
This completes the proof of $(*)$ and hence yields Theorem \ref{T:CSurf}.

\section*{Acknowledgements} We would like to thank Toshiro Hiranouchi for helpful correspondence and the anonymous referee for useful remarks. The second author was partially supported by NSF grant DMS-2154408.



\end{document}